\newcommand{\R}{\mathbb{R}}
\newcommand{\N}{\mathbb{N}}
\newcommand{\eexp}{\overrightarrow{\exp}}
\newcommand{\eexpi}{\overleftarrow{\exp}}
\newcommand{\ind}{\mathrm{ind}}
\newcommand{\Vc}{\mathrm{Vec}}
\newcommand{\He}{\mathrm{He}}
\newcommand{\ov}{\overline}
\newcommand{\mc}{\mathcal}
\newcommand{\IM}{\mathrm{Im} \,}
\newcommand{\proj}{\pi}
\newcommand{\BS}{L^\infty(I,\R)\oplus L^2(I,\R)}
\newtheorem{thm}{Theorem}
\newtheorem{lemma}[thm]{Lemma}
\newtheorem{cor}[thm]{Corollary}
\newtheorem{prop}[thm]{Proposition}
\theoremstyle{definition}
\newtheorem{defi}[thm]{Definition}
\theoremstyle{remark}
\newtheorem{remark}[thm]{Remark}
\newcommand{\be}{\begin{equation}}
\newcommand{\ee}{\end{equation}}
\numberwithin{equation}{section}
\title[Normal forms for the endpoint map near nice singular curves]{Normal forms for the endpoint map near nice singular curves}
\author{Andrei A. Agrachev}
\address{SISSA (Trieste) and Steklov Institute (Moscow)}
\email{\href{agrachev@sissa.it}{\nolinkurl{agrachev@sissa.it}}}
\author{Francesco Boarotto}
\address{Dipartimento di Matematica Tullio Levi-Civita,
 Universit\`a degli studi di Padova, Italy}
\email{\href{mailto:francesco.boarotto@math.unipd.it}{\nolinkurl{francesco.boarotto@math.unipd.it}}}
\subjclass[2010]{53C17, 58K50, 58K05}
\keywords{Sub-Riemannian geometry, abnormals, endpoint mapping, normal forms}
\thanks{F.B. has been supported by 
the ANR SRGI (reference ANR-15-CE40-0018), and by
the University of Padova STARS Project ``Sub-Riemannian Geometry and Geometric Measure Theory Issues: Old and
	New"}
\begin{document}
	
	\maketitle
	
	\begin{abstract}
	Given a rank-two sub-Riemannian structure $(M,\Delta)$ and a point $x_0\in M$, a singular curve is a critical point of the endpoint map $F:\gamma\mapsto\gamma(1)$ defined on the space of horizontal curves starting at $x_0$. The typical least degenerate singular curves of these structures are called \emph{regular singular curves}; they are \emph{nice} if their endpoint is not conjugate along $\gamma$.
	The main goal of this paper is to show that locally around a nice singular curve $\gamma$, once we choose a suitable topology on the control space we can find a normal form for the endpoint map, in which $F$ writes as a sum of a linear map and a quadratic form. We also study the restriction of $F$ to the level sets of the action functional and give a Morse-like formula for the inertia index of its Hessian at $\gamma$. This is a preparation for a forthcoming generalization of the Morse theory to rank-two sub-Riemannian structures.
	\end{abstract}
	
	\section{Introduction}\label{sec:Introduction}
	\subsection{Horizontal path spaces and singular curves}Let $M$ be a smooth manifold of dimension $m>2$ and consider a smooth (i.e. $C^\infty$-smooth), totally nonholonomic distribution $\Delta\subset TM$ of rank $2$. Define $I:=[0,1]$.
Given  a point $x_0\in M$ (which we will assume fixed once and for all) the horizontal path space $\Omega$ of \emph{admissible} (also called \emph{horizontal}) curves starting at $x_0$ is defined by:
	\be\label{eq:Omega}
		\Omega=\{\gamma:I\to M\,|\, \gamma(0)=x_0, \,\textrm{$\gamma$ is absolutely continuous, $\dot{\gamma}\in \Delta$ a.e. and is $L^2$-integrable}\}.
	\ee
	The $W^{1,2}$-topology endows $\Omega$ with a Hilbert manifold structure, locally modeled on $L^2(I, \R^2)$. The \emph{endpoint map} $F:\Omega\to M$ is the smooth map assigning to each curve its final point  $F(\gamma)=\gamma(1)$. Given $y\in M$ we call $\Omega(y):=F^{-1}(y)$ the set of all horizontal curves joining $x_0$ and $y$. If $y$ is a \emph{regular} value of $F$, then $\Omega(y)$ is a smooth Hilbert submanifold. In general, however, $y$ is not regular and $\Omega(y)$ is a submanifold with singularities.
	
	A curve $\gamma$ is \emph{singular} (or \emph{abnormal}) if $d_\gamma F:T_\gamma \Omega\to T_{F(\gamma)}M$ is not surjective. The \emph{corank} of $\gamma$ is the codimension of $\IM(d_\gamma F)$ in $T_{F(\gamma)}M$. Singular curves are central objects in the theory of nonholonomic distributions, but their study is a difficult problem and many fundamental questions related to their structure are still open \cite{Mont02,Ag_Open}. Most of the difficulties come from the fact that the Hessian of $F$ is always degenerate, and even in the simplest case of a singular curve $\gamma$ of corank one, it is difficult to obtain local informations on $F$ from the Hessian: for example it is known \cite{boarotto2015homotopy} that the endpoint map is always locally open in the $W^{1,2}$-topology even if the Hessian has a sign. 
	
	Nevertheless, singular curves are interesting for the following reason. Fix on $\Delta$ a Riemannian metric, and denote by $\|\cdot\|_{L^2(I,\R^2)}$ the associated $L^2$ norm. Define then the \emph{energy} functional $J:\Omega\to \R$ by the formula $J(\gamma)=\frac{1}{2}\|\dot{\gamma}\|_{L^2(I,\R^2)}$. The \emph{sub-Riemannian minimizing problem} between $x_0$ and $y\in M$ consists into finding the admissible curves realizing $\min\{ J(\gamma)\mid \gamma\in\Omega(y)\}$, and Montgomery proved for the first time in \cite{AbnMin} that singular curves may be solutions to the problem (in fact, even independently on the choice of the metric).
	
	\subsection{Rank-two-nice singular curves} Let $\mc{F}:\Omega\to M\times \R$ denote the \emph{extended} endpoint map, that is $\mc{F}$ is the pair $(F,J)$. Candidate solutions $\gamma$ to the sub-Riemannian minimizing problem, called \emph{extremals}, are constrained extremal points for $\mc{F}$ and in particular a singular curve is an extremal. An extremal is called \emph{strictly singular} (or \emph{strictly abnormal}) if, for every nonzero $\xi\in T^*_{\mc{F}(\gamma)}(M\times \R)= T^*_{F(\gamma)}M\times \R$ such that $\xi d_\gamma \mc{F}=0$, the projection of $\xi$ onto its $\R$-factor is zero.
	
	Let us define $\Delta^2:=[\Delta,\Delta]$ and $\Delta^3:=[\Delta,\Delta^2]$, where $[\cdot,\cdot]$ are the usual Lie brackets. Borrowing the terminology from \cite{SLShortest}, we say that a
	singular curve $\gamma$ is \emph{regular}
	if the Pontryagin Maximum Principle provides a nowhere zero curve $\eta(t)$ (sometimes called \emph{biextremal} in the literature), dual to $\gamma(t)$, such that $\eta(t)\in (\Delta_{\gamma(t)}^2)^\perp\setminus (\Delta_{\gamma(t)}^3)^\perp$ for every $t\in [0,1]$.
	Regular singular curves are the least degenerate singular curves for sub-Riemannian structures of rank $2$. On the contrary, for the generic (with respect to the $C^\infty$-Whitney topology on the set of distributions $\Delta$) sub-Riemannian structure of rank greater than or equal to $3$ there are no regular singular curves \cite[Corollary 2.5]{CJT06}. A regular singular curve $\gamma$ is smooth \cite[Theorem 4.4]{AS96}.

	\begin{defi}\label{defi:ranktwonice}
		An admissible singular curve $\gamma$ is a \emph{rank-two-nice} singular curve if
		\begin{itemize}
			\item[(i)] $\gamma$ is a corank-one regular strictly singular curve.
			\item[(ii)] $y=F(\gamma)$ is not a conjugate point along $\gamma$.
		\end{itemize}
	\end{defi}
	
	Having already explained the meaning of (i), we briefly discuss (ii). Consider the subspace $\ker(d_\gamma F)\subset T_\gamma\Omega$, equipped with the $W^{1,2}$-topology. In particular, if $\gamma$ has corank one, $\ker(d_\gamma F)$ is of codimension $m-1$ in $T_\gamma\Omega$ and the Hessian of $F$ at $\gamma$ is a
	$T_{F(\gamma)}^*M/\IM (d_\gamma F)$-valued quadratic form on $\ker(d_0F)$. Notice that $T_{F(\gamma)}^*M/\IM (d_\gamma F)\simeq \R$.

It turns out that we may consider on $\Omega$ a topology which is weaker than the $W^{1,2}$-topology, for which the endpoint map $F$ is still well-defined. We denote by $\mathring{\Omega}$ the closure of $\Omega$ with respect to this topology,
	and by $\mathring{F}$ the continuous extension of $F$ to $\mathring{\Omega}$.
	We say that $y=F(\gamma)$ is not a conjugate point along $\gamma$ if, essentially, $\ker(\He_\gamma \mathring{F})\subset\ker(d_\gamma \mathring{F})$ is ``minimal''.  
	The precise definitions, which are rather technical, are given with all the details in Section~\ref{sec:conjpoints} and specifically in Definition~\ref{defi:conj}. We only remark that, once a regular strictly singular curve $\gamma$ is chosen, the set of $s$ such that $y=\gamma(s)$ is not conjugate along $\gamma$, i.e. the set of times $s$ such that $\gamma|_{[0,s]}$ is rank-two-nice, is dense in $I$ by \cite[Lemma 7]{Sarysec}.

	\subsection{Local coordinates and the main theorem}
	
	 Let $\gamma\in\Omega(y)$ be a rank-two-nice singular curve. Our study of the endpoint map $F$ being local around $\gamma$,
we assume without loss of generality that $\gamma$ does not have self-intersections, lifting if necessary both $\Delta$ and $\gamma$ to a covering space of a neighborhood $\mc{O}_{\text{supp}(\gamma)}\subset M$ of $\text{supp}(\gamma):=\{\gamma(t)\mid t\in I\}$.
	
	If $\gamma$ does not have self-intersections,
there exists a pair of smooth vector fields $X_1$ and $X_2$ such that
	\be\label{eq:loc_param}
		\Delta_x=\mathrm{span}\left\{ X_1(x),X_2(x) \right\}
	\ee
	for every $x\in \mc{O}_{\text{supp}(\gamma)}$, and such that $\gamma$ is an integral curve of the field $X_1$.

We parametrize admissible curves $\xi\in\Omega$ contained in $\mc{O}_{\text{supp}(\gamma)}$ as integral curves on $M$ of the differential system:
	\be\label{eq:ctrlsystem}
		\dot{\xi(t)}=(1+v_1(t))X_1(\xi(t))+v_2(t)X_2(\xi(t)),\ \ \xi(0)=x_0,\ \ \text{a.e. }t\in I,
	\ee
	with $v=(v_1,v_2)\in L^2(I,\R^2)$. Identifying an admissible curve $\xi$ with its control $v$ via \eqref{eq:ctrlsystem}, by a slight abuse of notation we
set
	\be
		F(v)=F(v_1,v_2)=F(\xi).
	\ee
In particular $F(0)=F(\gamma)$.

The main theorem of our paper gives a local normal form of $F$. Local in this setting does not just mean: ``in a neighborhood of zero in $L^2(I,\R^2)$'', but is a bit more delicate. Given a subspace $E\subset L^2(I,\R^2)$, the intersection of $E$ with a neighborhood of the origin in $L^\infty(I,\R)\oplus L^2(I,\R)$ will be called an $(\infty,2)$-neighborhood of the origin in $E$. For every $d\in\N$, an $(\infty,2)$-neighborhood of the origin in $E\oplus\R^d$ is the sum of an $(\infty,2)$-neighborhood of the origin in $E$ and
a neighborhood of the origin in $\R^d$.			
	\begin{thm}\label{thm:main}
	Let $\gamma\in\Omega(y)$ be a rank-two-nice singular curve.
Then there exist an origin-preserving homeomorphism $\mu:\mc{V}\to\mc{V}'$, of $(\infty,2)$-neighborhoods of the origin $\mc{V}\subset \ker(d_0F)\oplus \IM (d_0F)$ and $\mc{V}'\subset L^2(I,\R^2)$,
and a diffeomorphism $\psi:\mc{O}_y\to \mc{O}_0$ of neighborhoods $\mc{O}_y\subset M$ and $\mc{O}_0\subset \R\oplus \IM (d_0F)$, respectively of $y$ and $0$, such that
	\be
		\psi\circ F\circ \mu(v,w)=\left( \He_0F(v),w \right)
	\ee
	for every $(v,w)\in \mc{V}$.
	\end{thm}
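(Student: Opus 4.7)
The plan is to reduce $F$ to its normal form in three stages. First, fix a finite-dimensional subspace $W \subset L^\infty(I,\R^2)$ of dimension $m-1$ mapped isomorphically onto $\IM(d_0F)$ by $d_0F$, so that $L^2(I,\R^2) = \ker(d_0F) \oplus W$. Choose local coordinates on $M$ near $y$ identifying a neighborhood of $y$ with $\R \oplus \IM(d_0F)$, and write $F = (F_2, F_1)$, with $F_1$ valued in $\IM(d_0F)$ and $F_2$ scalar. Since $F$ is smooth as a map from $L^\infty(I,\R^2)$ to $M$ and $d_0F_1|_W \colon W \to \IM(d_0F)$ is a linear isomorphism, the classical implicit function theorem applied on the finite-dimensional $W$ produces, for every $(v,w)$ in a small $(\infty,2)$-neighborhood of the origin in $\ker(d_0F) \oplus \IM(d_0F)$, a unique $\sigma(v,w) \in W$ with $F_1(v + \sigma(v,w)) = w$. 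The map $\mu_1(v,w) := v + \sigma(v,w)$ is then an origin-preserving homeomorphism of $(\infty,2)$-neighborhoods, and $F \circ \mu_1(v,w) = (G(v,w),w)$ for some scalar-valued $G$.

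The second stage is the normalization of $G$. Absorbing $w \mapsto G(0,w)$ into a target diffeomorphism of the form $(r,u) \mapsto (r - G(0,u), u)$, we may assume $G(0,w) = 0$. Because the scalar factor of the target corresponds to a nonzero element of $\mathrm{coker}(d_0F)$, the differential $\partial_v G$ vanishes at $(0,0)$, and a direct expansion of \eqref{eq:ctrlsystem} in $v$ identifies the pure quadratic part of $v \mapsto G(v,0)$ on $\ker(d_0F)$ with the quadratic form $\He_0 F$. This leaves us with $G(v,w) = \He_0 F(v) + R(v,w)$, where $R$ and $\partial_v R$ vanish along $\{v=0\}$ and $R$ is uniformly $o(\|v\|_{L^2}^2)$ in a sense compatible with the mixed topology.

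The final step is to absorb $R$ by a further $w$-dependent reparametrization $\mu_2 \colon (v,w) \mapsto (\tilde v(v,w), w)$ of the domain such that $G(\tilde v(v,w), w) = \He_0 F(v)$. This is the crux, and the non-conjugacy hypothesis in Definition~\ref{defi:ranktwonice} is precisely what makes it possible: it ensures that $\He_0 F$ is non-degenerate modulo its (minimal) finite-dimensional kernel on $\ker(d_0F)$. Rather than a Moser flow, which would require a smoothness in the $L^2$-direction that we do not have, I would proceed by a parametrized Lyapunov--Schmidt reduction: split $\ker(d_0F) = K \oplus K^\perp$ with $K$ the finite-dimensional kernel of $\He_0 F$ and $\He_0 F$ coercive on $K^\perp$, solve for the $K^\perp$-component of $\tilde v$ by a uniform Banach fixed-point argument in the $L^\infty$-topology, and then handle the residual finite-dimensional problem on $K$ by the classical Morse lemma. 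The principal obstacle is the bookkeeping of regularities: $R$ is smooth only in the $L^\infty$-direction while $\He_0 F$ is natively $L^2$-continuous, and the entire normal form is engineered so that the final map $\mu = \mu_1 \circ \mu_2$ is an origin-preserving homeomorphism in the mixed $(\infty,2)$-topology. Verifying this compatibility --- rather than any single estimate --- is the technical heart of the theorem.
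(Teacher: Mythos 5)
The proposal has a genuine gap at its core, and it is worth being precise about where it breaks. In stage three you write that the non-conjugacy hypothesis ``ensures that $\He_0 F$ is non-degenerate modulo its (minimal) finite-dimensional kernel on $\ker(d_0F)$,'' and you then propose to split $\ker(d_0F)=K\oplus K^\perp$ with $K=\ker(\He_0F)$ finite-dimensional, run a Banach fixed-point argument on $K^\perp$, and finish with the classical Morse lemma on $K$. This picture is wrong on two counts. First, $\ker(\He_0F)$ is \emph{never} finite-dimensional here: by \eqref{eq:Q} and Remark~\ref{rem:properties_decomposition}, $\lambda\He_0F$ does not depend on $v_1$ at all, so the infinite-dimensional subspace $Z_1=\{(v_1,0)\in\ker(d_0F)\mid\int_0^1v_1=0\}$ always lies in the kernel. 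The rank-two-nice condition only guarantees that the kernel is \emph{exactly} $Z_1$, i.e. that nothing extra is added; it does not make the kernel small. Second, even if $K$ were finite-dimensional, the ``residual finite-dimensional problem on $K$'' cannot be treated by the classical Morse lemma, since on $K=\ker(\He_0F)$ the quadratic form vanishes identically and there is nothing to normalize.

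What is missing is precisely the key device the paper introduces in Section~\ref{sec:deformation}: the time-reparametrization homeomorphism $\rho$ of \eqref{eq:rhopar}. The infinite-dimensional kernel $Z_1$ consists of the directions along which $F$ is invariant \emph{because they correspond to reparametrizations of $\gamma$}, and $\rho$ is designed to exhibit this invariance explicitly, see \eqref{eq:changeofvv}: after composing with $\rho$, the endpoint map depends only on $\int_0^1 v_1$ and $v_2$. Only on this quotient is the Hessian of the reduced map $\mathring{\mathtt{F}}$ actually non-degenerate (thanks to non-conjugacy and Proposition~\ref{prop:invertible}), and only then can a Morse-type normalization succeed. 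The paper carries out this normalization on the completion space $\R^2\oplus L^2(I,\R)$ by a Moser-flow argument (Proposition~\ref{prop:GML}), whereas you explicitly decline to use a Moser flow; but the deeper issue is not flow versus fixed point, it is that without the reparametrization $\rho$ the obstruction $Z_1$ makes every such absorption argument fail, since the remainder $R$ is generically nonzero in directions where $\He_0F$ vanishes. Your stage one (Lyapunov--Schmidt against the cokernel, using the implicit function theorem on a finite-dimensional complement of $\ker(d_0F)$) is essentially what the paper's proof also does implicitly, and is fine; the argument collapses after that because the central geometric mechanism --- quotienting by reparametrizations --- is absent.
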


\begin{remark} The class of available $(\infty,2)$-neighborhoods does not depend on a particular choice of the frame as long as $\gamma$ is an integral curve of $X_1$, since a change of the frame would result in a smooth change of local coordinates in the space of horizontal curves.\end{remark}
	
	 The restriction to $(\infty,2)$-neighborhoods is not by chance, and in fact Theorem~\ref{thm:main} cannot be true
if we put the $L^2(I,\R^2)$-topology on the space of controls. If $\gamma$ is a rank-two-nice curve, the negative eigenspace $N\subset L^2(I,\R^2)$ of the Hessian $\He_0F$ is finite dimensional, and possibly empty (see Proposition~\ref{prop:ConjProp}). Moreover, the restriction of $F$ to any subspace of finite codimension is an open map \cite[Proposition 2]{boarotto2015homotopy}.
	Assuming Theorem~\ref{thm:main} to hold in $L^2(I,\R^2)$, on the finite codimensional space $N^\perp:=\{x\in L^2(I,\R^2)\mid (x,y)_{L^2(I,\R^2)}=0,\, \text{for every }y\in N\}$ the projection of $F$ along the ``abnormal'' direction would instead have the sign of $\He_0F$, yielding an absurd.
	
	  We close the paragraph recalling that, if there are no conjugate points along $\gamma$ between $x_0$ and $y=\gamma(1)$, it is known \cite{AS96} that $\gamma$ is a local minimizer in the $L^2$-topology on the space of controls. This result, however, is proved by showing that the perturbation provided by $\He_0F$ along the abnormal direction is not compensated by higher order terms, and does not require any normal form for $F$ (which does not exist!).
	
	  \subsection{Connections with Morse Theory} Theorem~\ref{thm:main} is a ``nonstandard'' infinite dimensional Morse Lemma. It is nonstandard because the first map $\mu$ is just an homeomorphism, and not a diffeomorphism as in the classical setting. This difference is intrinsic to the geometry of the problem and is due to the fact that $\He_0F$ is highly degenerate.
	
	 To give a flavour of the results that descend from Theorem~\ref{thm:main}, we state the following result, whose proof is given in Section~\ref{sec:isolation}.
	\begin{thm}\label{thm:isolation}
		There exists a neighborhood of the origin $\mc{U}\subset \BS$  such that, if $u\subset \mc{U}$ is a control associated with an extremal curve in $\Omega(y)$, then $u=(u_1,0)$ a.e. on $I$, and $\int_0^1u(t)dt=0$.
	\end{thm}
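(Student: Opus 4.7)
The strategy is to combine Theorem~\ref{thm:main} with the strict abnormality of $\gamma$ to show that every extremal near $\gamma$ is a reparametrization of it. I would begin by applying Theorem~\ref{thm:main} to obtain the homeomorphism $\mu:\mathcal{V}\to\mathcal{V}'$ and the diffeomorphism $\psi$ with $\psi\circ F\circ\mu(v,w)=(\He_0 F(v),w)$. After shrinking $\mathcal{U}\subset\mathcal{V}'$ if needed, every $u\in\mathcal{U}\cap F^{-1}(y)$ can be written as $u=\mu(v,0)$ for some $v\in\ker(d_0 F)$ with $\He_0 F(v)=0$.

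Next, I would rule out normal extremals. Strict abnormality of $\gamma$ is equivalent to $d_0 J|_{\ker(d_0 F)}\neq 0$; this is an open condition in the $L^2$-topology on the control space, so after a further shrinking of $\mathcal{U}$ the same property holds for every $u\in\mathcal{U}$. Consequently no such $u$ can satisfy the normal Lagrange condition $d_u J\in\IM((d_u F)^*)$, and every extremal $u\in\mathcal{U}\cap F^{-1}(y)$ must be strictly abnormal, i.e.\ $d_u F$ is not surjective.

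The core step is to transfer the non-surjectivity of $d_u F$ across the normal form into a null-space condition on $v$. The claim is that $u=\mu(v,0)$ is singular if and only if $v$ lies in the kernel of the polar form of $\He_0 F$ on $\ker(d_0 F)$, i.e.\ $\He_0 F(v,\cdot)\equiv 0$ as a linear functional. Granting this, the rank-two-nice hypothesis together with Proposition~\ref{prop:ConjProp} identifies this null space with the infinitesimal reparametrizations of $\gamma$, that is, with directions of the form $(v_1,0)\in L^\infty(I,\R)\oplus L^2(I,\R)$. Pulled back through $\mu$, these correspond to controls $u=(u_1,0)$. The endpoint condition $F(u)=\gamma(1)$ then reads $\gamma(1+\int_0^1 u_1\,dt)=\gamma(1)$, and since $\gamma$ has no self-intersections and $\dot\gamma(1)=X_1(\gamma(1))\neq 0$, this forces $\int_0^1 u_1\,dt=0$.

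The main obstacle lies in the claim of the third paragraph: passing the non-surjectivity of $d_u F$ through the homeomorphism $\mu$ to a null-space condition on $v$. Since $\mu$ is not a diffeomorphism, a direct chain-rule argument is unavailable. The argument would likely require revisiting the construction of $\mu$ in the proof of Theorem~\ref{thm:main}, where by design the singular locus of $F$ and the critical locus $\{v:\He_0 F(v,\cdot)\equiv 0\}$ of the model $(v,w)\mapsto(\He_0 F(v),w)$ should correspond, or alternatively using the openness properties of $F$ on finite-codimensional subspaces (cf.\ \cite{boarotto2015homotopy}) to rule out singular behaviour for $v$ outside the null space.
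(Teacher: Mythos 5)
Your outline follows the paper's strategy — reduce via Theorem~\ref{thm:main} to the slice $\{w=0\}$ inside $\Omega(y)$, identify the null directions with $Z_1=\{(v_1,0)\mid\int_0^1 v_1\,dt=0\}$ using rank-two-niceness, and close with the endpoint condition — but the step you single out as the ``core'' obstacle is indeed a genuine gap, and neither of your two suggested repairs is the paper's device. The paper does \emph{not} attempt the singular-locus correspondence ``$\mu(v,0)$ singular $\iff \lambda\He_0F(v,\cdot)\equiv 0$,'' nor does it first dispose of normal extremals by an openness argument (which is itself delicate, since $\IM((d_uF)^*)$ jumps in dimension when $d_uF$ becomes surjective). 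Instead it picks $a\in C^\infty(M)$ with $a(y)=0$, $d_ya=\lambda$, and considers the scalar composition $a\circ F$: in the normal-form coordinates this becomes the explicit quadratic $(v,w)\mapsto\lambda\He_0F(v)$, whose differential at $(v,0)$ is $2\lambda\He_0F(v,\cdot)$. If $v_2\ne 0$ then $v\notin Z_1$, and since the Hessian is non-degenerate on $\mathring{P}\oplus\mathring{N}$ (Propositions~\ref{prop:positive} and~\ref{prop:invertible}) this functional is nonzero; hence $d_{(v,w)}\mc{F}$ has maximal rank and no covector $\xi$ as in \eqref{eq:PMP} can annihilate it, contradicting extremality of $u$. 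This handles the normal and the abnormal multiplier in one stroke and requires only the differential of a \emph{scalar} function, whose coordinate formula is explicit, rather than transporting the full singular/nonsingular dichotomy for $F$ across the homeomorphism $\mu$. Your closing step, reading $\int_0^1 u_1\,dt=0$ off $y\circ e^{(\int u_1)X_1}=y$, agrees with the paper.
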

	
	 The topology we require in Theorem~\ref{thm:isolation} is stronger than the $W^{1,2}$-topology, hence our result does not allow to conclude about the local minimality of $\gamma$ in $\Omega$. Morover, denoting by $\ind(q)$ the inertia index of a given quadratic form $q$, we know that $\gamma$ is not a local minimum for $J$ as soon as $\ind(\He_0F)\ge 1$.

	 At any rate, with respect to the $\BS$ topology, Theorem~\ref{thm:main} reduces the study of $\Omega(y)$ to the study of the infinite-dimensional quadratic cone $\left\{\He_{0}F=0\right\}$. If $\ind(\He_{0}F)=0$, in this topology $\gamma$ is isolated in $\Omega(y)$ and we recover the \emph{rigidity} results of \cite{AS96}, which generalize those in \cite{BryantHsu}. This is true independently on the functional we are trying to minimize.

	To bring also the energy $J$ into the picture, and to study the sub-Riemannian minimizing problem more closely, we need to consider the extended endpoint map $\mc{F}$. In fact, since $\gamma$ is a corank-one strictly abnormal curve, $\He_0 F$ and $\He_0 \mc{F}$ have the same expression, but their domains are different because $\ker(d_0 \mc{F})=\ker(d_0 F)\cap \ker (d_0 J)$ has codimension one in $\ker(d_0 F)$.
	
	If $y=\gamma(1)$ is not conjugate along $\gamma$ neither for $\mc{F}$, then $\mc{F}$ has a normal form as well, and Theorem~\ref{thm:main} makes evident that
	the topological information provided by a rank-two-nice curve $\gamma$ is contained:
	 \begin{itemize}
	 	\item  in the pair $\left(\ind(\He_0F),\ind(\He_0\mc{F})\right)$ (by construction we have $\ind(\He_0\mc{F})\le \ind(\He_0F)\le \ind(\He_0\mc{F})+1$),
		\item  in the relative position of $\mathcal{J}:=\proj_{\ker(d_0 F)}\nabla_0 J$ with respect to the cone $\{\He_0F=0\}$, as shown in Figure~\ref{fig:norfor}.
	 \end{itemize}
	
	\begin{figure}
			\includegraphics[scale=.5]{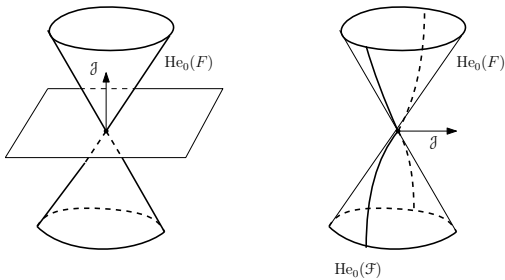}
			\caption{The relative positions between $\mathcal{J}:=\proj_{\ker(d_0 F)}\nabla_0 J$ and the quadratic cone $\{\He_0F=0\}$. In the first case
$\He_0F$ and $\He_0\mc{F}$ have different indexes, in the second
the indexes are the same.}
			\label{fig:norfor}
		\end{figure}
	
	For example, if $(\ind(\He_0F),\ind(\He_0\mc{F}))=(1,0)$, it is no longer true that $\gamma$ is isolated on the whole $\Omega(y)$, but only on the intersection of $\Omega(y)$ with the level set $\{J=J(\gamma)\}$. More generally, the knowledge of $\left(\ind(\He_0F),\ind(\He_0\mc{F})\right)$ allows to classify the singularity of $\Omega(y)$ around $\gamma$.
	
	We expect that rank-two-nice curves influence the homotopy type of the Lebesgue sets of $J$, i.e. of the sets $\{J\le c\} $ for $c\in \R$, and we plan to study how the ``homotopical  (in)visibility'' (in the $W^{1,2}$-topology) of a rank-two-nice curve $\gamma$ is affected by the interplay between $\ind(\He_0F)$ and $\ind(\He_0\mc{F})$ in a forthcoming paper, completing the Morse-like results contained in \cite{ABL17} to the case of sub-Riemannian structures of rank two. 		
	
	\subsection{An explicit computation of conjugate times}\label{sec:example} Let $M=SO(3)\times \R$ and $\mathfrak{m}=\mathfrak{so}(3)\oplus\R$ be its Lie algebra. Let
	\be
		X_1:=\frac{(T_1+T_2)\oplus 2}{\sqrt{2}},\ \  X_2:=\frac{T_1\oplus 1}{\sqrt{2}},
	\ee
 	where $T_1,T_2,T_3$ are the standard generators for $\mathfrak{so}(3)$, that is $[T_1,T_2]=T_3$, $[T_2,T_3]=T_1$, $[T_3,T_1]=T_2$.
	
	We define a distribution $\Delta\subset TM$ extending these vectors to vector fields on $M$ by left-translation
		and we consider on $\Delta$ the metric that makes $X_1$ and $X_2$ orthonormal.
		The energy of a horizontal curve is
	the squared $L^2$-norm of its velocity.
	
		We consider $\gamma$ to be an integral curve of $X_1$.
		This curve satisfies point (i) in Definition~\ref{defi:ranktwonice}
by the results in \cite[Section 8]{Sus4d}. We denote by $\gamma_s:=\gamma\big|_{[0,s]}$ the restriction of $\gamma$ to the interval $[0,s]$.
	The set of times $s\in I$ for which $\gamma(s)$ is a conjugate point along $\gamma$ for the map $F$ (resp. for the map $\mc{F}$) is given
	by the sets $\{a_F=0\}$ (resp. $\{a_{\mc{F}}=0\}$), where:
		\be
			\begin{aligned}
		a_F(s)&=
		\sin(s),\\
		a_{\mc{F}}(s)&=s\sin(s)+2(\cos(s)-1).
			\end{aligned}
		\ee
 We will explain later at the end of Section~\ref{sec:fourdim} how to derive such equations. If we pick a point $s_0$ such that $a_F(s_0)\ne 0\ne a_{\mc{F}}(s_0)$, then there exists a normal form for both $F$ and $\mc{F}$.

		The mapping $s\mapsto \left(\ind (\He_{\gamma_s}F), \ind(\He_{\gamma_s}\mc{F})\right)$ is piecewise constant on $I$:  the value of $\ind (\He_{\gamma_s}F)$ (resp. of $\ind (\He_{\gamma_s}\mc{F})$) changes when $s$ is a zero of $a_{F}$ (resp. a zero of $a_{\mc{F}}$).  In particular, there exist time intervals arbitrarily far from zero on which either $\He_{\gamma_s}F$ and $\He_{\gamma_s}\mc{F}$ have the same index, or where these indexes differ by one. Looking at Figure~\ref{fig:plot}, we see that the initial values of $\left(\ind (\He_{\gamma_s}F), \ind(\He_{\gamma_s}\mc{F})\right)$ are given by:
		\be
			(0,0),(1, 0),(2,1),(2,2),(3,2),(4,3),(4,4),(5,4),(6,5),\ldots
		\ee

	\begin{figure}
		\includegraphics[scale=.6]{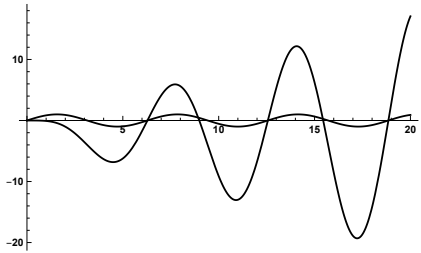}
		\caption{Plot of the functions $a_F(s)$ and $a_{\mc{F}}(s)$}
		\label{fig:plot}
	\end{figure}
	
	\subsection{Structure of the paper} We give in Section~\ref{se:preliminaries} all the technical preliminaries, and we study the endpoint map $F$ at the first and at the second order. We discuss conjugate points along a rank-two-nice singular curve $\gamma$ in Section~\ref{sec:conjpoints}, where we also establish some useful analytical properties of the Hessian map.
	In Section~\ref{sec:deformation} we construct the reparametrization map which is needed to bypass the kernel of the Hessian map, even though this requires the passage to the space $\BS$. Section~\ref{se:norfor} is the core of the paper, where we prove the existence of a normal form for the endpoint map $F$ locally around $\gamma$ and we give the proofs of Theorems~\ref{thm:main} and ~\ref{thm:isolation}. Finally, in Section~\ref{sec:examples} we characterize conjugate points along $\gamma$ on the cotangent space $T^*M$ in terms of an appropriate Jacobi equation, and we study in details the class of Engel structures, completing the explanation of our previous example.

	\section{Rank-two sub-Riemannian structures}\label{se:preliminaries}
	\subsection{
	Technical preliminaries}\label{sec:SC} Let $\Omega$ be defined as in \eqref{eq:Omega}.
	For $t\in I$, the map $F_t$ is the map that returns the point $\gamma(t)$ of a horizontal curve $\gamma\in \Omega$:
	\be
		 F_t:\Omega\to M,\quad F_t(\gamma)=\gamma(t).
	\ee
	Then $F=F_1$ is the endpoint map. We recall in the following proposition some useful properties of $F_t$ (see \cite{Tre00,boarotto2015homotopy}).
	\begin{prop}\label{prop:contendp}
		For every $t\in I$, the map $F_t:\Omega\to M$ is smooth with respect to the Hilbert manifold structure on $\Omega$. Moreover, if $\gamma_n\rightharpoonup\gamma$ weakly, then $F_t(\gamma_n)\to F_t(\gamma)$
	and $d_{\gamma_n}F_t\to d_{\gamma}F_t$ in the operator norm, uniformly with respect to $t$.
	\end{prop}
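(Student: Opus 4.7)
\medskip

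\noindent\textbf{Proof proposal.} The plan is to work in the local chart around $\gamma$ provided by \eqref{eq:ctrlsystem}, identifying $\Omega$ with a neighborhood of the origin in $L^2(I,\R^2)$ and $F_t$ with the control-to-state map at time $t$. The smoothness statement is then the classical fact that for a control-affine system with $C^\infty$ drift and control vector fields, the input-to-state map is $C^\infty$ from $L^2(I,\R^2)$ into $M$; this is proved by writing the solution of \eqref{eq:ctrlsystem} as the chronological exponential and differentiating term by term, which is justified because all iterated remainders are controlled by polynomial expressions in $\|v\|_{L^2}$.

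The weak-continuity of $F_t$ is an immediate consequence of the compact Sobolev embedding: if $\gamma_n\rightharpoonup\gamma$ in $W^{1,2}(I,M)$, then $\gamma_n\to\gamma$ uniformly on $I$, so in particular $F_t(\gamma_n)=\gamma_n(t)\to\gamma(t)=F_t(\gamma)$ uniformly in $t$. The main work is in the second assertion, namely the operator-norm convergence of the differentials uniformly in $t$. For this I would first write down the explicit variation-of-parameters formula
\be
d_\gamma F_t(v)=\left(\Put{t}{\gamma}\right)_*\int_0^t \left(\Put{s}{\gamma}\right)^{-1}_*\!\left( v_1(s)X_1(\gamma(s))+v_2(s)X_2(\gamma(s))\right)ds,
\ee
where $\Put{t}{\gamma}$ denotes the flow along $\gamma$ from $0$ to $t$. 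The integrand is a vector in $T_{\gamma(t)}M$ whose coefficients depend continuously (indeed smoothly) on $\gamma(s)$, $\gamma(t)$ and the Jacobian of the flow; in particular, fixing a trivialization, $d_\gamma F_t$ is realized as an integral operator $L^2(I,\R^2)\to\R^m$ with a kernel $K_\gamma(s,t)$ that is continuous in $(s,t)\in I\times I$ and continuous in $\gamma$ with respect to the $C^0$-topology.

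The key step is then to observe that the operator norm of such an integral operator is controlled by $\sup_{t\in I}\|K_\gamma(\cdot,t)\|_{L^2(I)}$, and hence by $\sup_{s,t}\|K_\gamma(s,t)\|$ (since $I$ has finite measure). Because $\gamma_n\to\gamma$ uniformly on $I$ (from the weak convergence in $W^{1,2}$), the flows $\Put{t}{\gamma_n}$ and their inverses converge uniformly to $\Put{t}{\gamma}$, $(\Put{t}{\gamma})^{-1}$ by continuous dependence of ODEs on their initial data together with the fact that the driving controls of $\gamma_n$ are weakly convergent in $L^2$ (so the corresponding primitives converge uniformly). Consequently $K_{\gamma_n}(s,t)\to K_\gamma(s,t)$ uniformly in $(s,t)$, which gives $\|d_{\gamma_n}F_t-d_\gamma F_t\|\to 0$ uniformly in $t$ as required.

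The main obstacle is the uniformity in $t$ in the operator-norm convergence: one has to make sure that the continuous dependence of the flows on $\gamma$, which is a priori expressed pointwise in $t$, can be upgraded to a uniform estimate on the whole of $I\times I$. This is handled by a single Gronwall-type inequality applied simultaneously to all pairs $(s,t)$ and exploiting the strong convergence of primitives of weakly $L^2$-convergent sequences.
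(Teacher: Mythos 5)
The paper does not prove Proposition~\ref{prop:contendp}; it is stated with a citation to the references (Tr\'elat's paper and \cite{boarotto2015homotopy}), so there is no internal proof to compare against. Your proposal reconstructs essentially the standard argument that appears in those references, and the overall structure is sound: identify $\Omega$ locally with controls in $L^2(I,\R^2)$, write $d_\gamma F_t$ as an integral operator with kernel $K_\gamma(s,t)$ built from the state-transition matrix $\Phi_\gamma(t,s)$ and the values $X_i(\gamma(s))$, observe that this rank-$m$ operator has norm $\|K_\gamma(\cdot,t)\|_{L^2}\le\sup_{s,t}|K_\gamma(s,t)|$, and then reduce the problem to uniform convergence $K_{\gamma_n}\to K_\gamma$ on $I\times I$. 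You have also correctly identified the one genuinely delicate point, which is that the flows $\Phi_{\gamma_n}(t,s)$ depend on the controls $u_n$ only through integrals, so weak $L^2$-convergence of $u_n$, combined with uniform convergence of $\gamma_n$, upgrades to uniform convergence of the transition matrices via a Gronwall estimate for the difference $\Phi_{\gamma_n}-\Phi_\gamma$ and the fact that primitives of weakly convergent $L^2$ sequences converge uniformly.

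Two points where your sketch is thin but not wrong. First, the remark that smoothness of the input-to-state map follows ``by differentiating the chronological exponential term by term'' glosses over the fact that the Volterra series is not the most convenient device for $L^2$ controls; the cleaner route (used in the cited sources) is to apply the implicit function theorem on $L^2(I,\R^2)\times C^0(I,\R^m)$ to the integral equation defining the trajectory, from which $C^\infty$-dependence is automatic. Second, when you write ``continuous dependence of ODEs on their initial data,'' the dependence you actually need is on the \emph{coefficients} $A_n(t)=u_{n,1}(t)DX_1(\gamma_n(t))+u_{n,2}(t)DX_2(\gamma_n(t))$, and since these converge only weakly you cannot invoke a generic continuous-dependence statement; the Gronwall argument must explicitly split $A_n-A$ into a part controlled by $\|\gamma_n-\gamma\|_{C^0}$ (which is strong) and a part $\bigl(u_n-u\bigr)DX_i(\gamma)$ that is handled through the weak limit inside the time integral. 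You do acknowledge this in your last paragraph, so I regard the proposal as correct in substance and in agreement with the approach of the cited literature.
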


	\begin{defi}\label{defi:sing}
		We say that $\gamma\in\Omega$ is a singular (or abnormal) curve if $\gamma$ is a critical point of $F$, or equivalently if $d_{\gamma}F:\Omega\to T_{F(\gamma)}M$ is not surjective. The corank of $\gamma$ is
	the codimension of the image $\IM(d_\gamma F)$ of $d_{\gamma}F$ in $T_{F(\gamma)}M$.
	\end{defi}
	
		We fix a Riemannian metric $g$ on $\Delta$, and we
	introduce the energy functional $J:\Omega\to \R$ by the formula
	\be\label{eq:J}
		J(\gamma)=\frac{1}{2}\int_0^1g(\dot{\gamma}(t),\dot{\gamma}(t))dt.
	\ee
	$J$ is evidently smooth on $\Omega$, but only lower semicontinuous with respect to the weak $W^{1,2}$-topology.
	
	\begin{defi}\label{defi:extee}
		We define the extended endpoint map $\mc{F}:\Omega\to M\times \R$ by
		\be
			 \mc{F}(\gamma)=(F(\gamma),J(\gamma)).
		\ee
	\end{defi}
	Given a point $y\in M$ different from $x_0$, we consider the problem of finding the admissible curves $\gamma$ connecting $x_0$ and $y$ that minimize the energy $J$. This problem can be reformulated as a constrained minimum problem on $\mc{F}$: a curve $\gamma$ is a candidate minimizer if there exists a nonzero covector $\xi=(\lambda,\lambda_0)\in T^*_{F(\gamma)}M\times \R$, defined up to scalar multiples, such that
	\be\label{eq:PMP}
	\xi d_\gamma\mc{F}=\lambda d_{\gamma}F+\lambda_0 d_\gamma J=0.
	\ee

	Candidate minimizing curves are called \emph{extremals}, and the Pontryagin Maximum Principle \cite{Pontryagin} (PMP, in short) characterizes extremals in terms of nowhere zero, absolutely continuous curves $\eta:[0,1]\to T^*M$, called \emph{biextremals} or \emph{extremal lifts}: an admissible curve $\gamma$ is an extremal only if it is the projection on $M$ of a
	biextremal.
	
	Notice that the initial datum $\lambda\in T^*_{F(\gamma)}M$ of every biextremal curve $\eta$ is the first component of a Lagrange multiplier $\xi=(\lambda,\lambda_0)$ in \eqref{eq:PMP}. If $\xi$ is such that $\lambda_0=-1$, we say that $\eta$ is a \emph{normal} biextremal and $\gamma$ is a \emph{normal} extremal curve. In this case small pieces of $\gamma$ are geodesics in the classical sense, i.e. sufficiently short pieces of $\gamma$ are minimizing curves between their endpoints.
	 If instead $\lambda_0=0$, $\gamma$ is a singular curve and it is the projection of an \emph{abnormal} biextremal starting at $\lambda$, as we will now explain.
	
	The cotangent bundle $T^*M$ is canonically endowed with a symplectic form $\omega$, that is a closed non-degenerate smooth section  of $\Lambda^2(T^*M)$, and a bundle projection $\pi:T^*M\to M$.
	Consider the subspace
	\be
		\Delta^\perp:=\{ \lambda\in T^*M\mid \langle\lambda,v\rangle=0,\,\text{for every }v\in\Delta \}\subset T^*M,
	\ee
	where the notation $\langle\cdot,\cdot\rangle$ stands for the duality product between vectors and covectors. The restriction $\ov{\omega}$ of $\omega$ to $\Delta^\perp$ no longer needs to be non-degenerate and may admit characteristic lines \cite{Mont02}.
	\begin{defi}\label{defi:abn}
		A nowhere zero, absolutely continuous curve $\eta:I\to \Delta^\perp$ is an abnormal biextremal if, for a.e. $t\in I$, $\dot{\eta}(t)$ belongs to $\ker(\ov{\omega}_{\eta(t)})$, that is if for a.e. $t\in I$,
		\be
			\ov{\omega}_{\eta(t)}(\dot{\eta}(t),\nu)=0
		\ee
		for every $\nu\in T_{\eta(t)}
		\Delta^\perp$.
	\end{defi}
	
	\begin{prop}[\protect{\cite[Theorem 10]{Hsu92}}]\label{prop:SingAbn}
		An admissible curve $\gamma\in\Omega$ is singular if and only if $\gamma$ is the projection of an abnormal biextremal $\eta:[0,1]\to \Delta^\perp$. As a matter of terminology, we say that $\eta$ is an abnormal lift of $\gamma$.
	\end{prop}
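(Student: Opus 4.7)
The plan is to reduce both directions to an integral formula for $d_0F$ in the local coordinates~\eqref{eq:ctrlsystem}, and then to translate the algebraic annihilation condition into the characteristic-line condition on $\Delta^\perp$ via the Hamiltonian lift of $X_1$. Fix the frame $X_1,X_2$ from~\eqref{eq:loc_param}, let $\Phi_t:\mathcal{O}_{\mathrm{supp}(\gamma)}\to M$ be the flow of $X_1$, so that $\gamma(t)=\Phi_t(x_0)$, and denote by $P^t:=(\Phi_{1-t})_{*,\gamma(t)}:T_{\gamma(t)}M\to T_yM$ the differential of the flow from time $t$ to time $1$. A standard variation-of-constants computation on~\eqref{eq:ctrlsystem} yields the formula
\be
d_0F(u_1,u_2)=\int_0^1 P^t\bigl(u_1(t)X_1(\gamma(t))+u_2(t)X_2(\gamma(t))\bigr)\,dt,
\ee
valid for $(u_1,u_2)\in L^2(I,\R^2)$.

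For the forward implication, suppose $\gamma$ is singular and pick a nonzero $\lambda\in T^*_yM$ annihilating $\IM(d_0F)$. Define the absolutely continuous curve $\eta(t):=(P^t)^*\lambda\in T^*_{\gamma(t)}M$, which is nowhere zero since $\lambda\neq 0$ and $P^t$ is an isomorphism. Testing $\lambda\circ d_0F=0$ against controls of the form $u_i(t)=\varphi(t)e_i$ for arbitrary $\varphi\in L^2(I,\R)$ and $i=1,2$, the integral formula forces
\be
\bigl\langle \eta(t),X_i(\gamma(t))\bigr\rangle=0\quad\text{for a.e.\ }t\in I,\ i=1,2,
\ee
and hence, by continuity, for all $t$. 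This is precisely $\eta(t)\in\Delta^\perp_{\gamma(t)}$ for every $t\in I$.

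To upgrade this to an abnormal biextremal in the sense of Definition~\ref{defi:abn}, I would use the Hamiltonian formalism on $T^*M$. Let $h_i(\lambda):=\langle\lambda,X_i(\pi\lambda)\rangle$ and let $\vec h_1$ be the symplectic gradient of $h_1$ with respect to $\omega$. By construction $\eta(t)$ is the integral curve of $\vec h_1$ starting at $\eta(0)=(P^0)^*\lambda$, since $\gamma$ is the integral curve of $X_1$ starting at $x_0$ and the lift by $(\Phi_{1-t})^*$ is precisely the Hamiltonian lift of the flow of $X_1$. Now $\Delta^\perp=\{h_1=h_2=0\}$ and along $\eta$ both $h_1$ and $h_2$ vanish identically, so $\dot\eta(t)=\vec h_1(\eta(t))$ is tangent to $\Delta^\perp$. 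The key identity to close this direction is
\be
\omega_{\eta(t)}(\vec h_1(\eta(t)),\nu)=\langle dh_1,\nu\rangle=0\quad\text{for every }\nu\in T_{\eta(t)}\Delta^\perp,
\ee
which follows because $T_{\eta(t)}\Delta^\perp\subset \ker dh_1\cap\ker dh_2$ at points where $h_1=h_2=0$. Thus $\dot\eta(t)\in\ker\bar\omega_{\eta(t)}$, proving that $\eta$ is an abnormal biextremal projecting to $\gamma$.

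For the converse, if $\eta:I\to\Delta^\perp$ is a nowhere-zero abnormal biextremal projecting to $\gamma$, I would run the same computation backwards: the characteristic-line condition on $\bar\omega$, together with $h_1(\eta)=h_2(\eta)\equiv 0$, forces $\dot\eta=c(t)\vec h_1(\eta)+\tilde c(t)\vec h_2(\eta)$; matching the projection onto $M$ with $\dot\gamma=X_1(\gamma)$ fixes $c\equiv 1$ and $\tilde c\equiv 0$. Setting $\lambda:=\eta(1)$ and using $\eta(t)=(P^t)^*\lambda$ one more time, the formula for $d_0F$ together with $\langle\eta(t),X_i(\gamma(t))\rangle=0$ gives $\lambda\circ d_0F\equiv 0$, so $\gamma$ is singular. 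The main obstacle I anticipate is the passage between the two formulations of ``abnormal biextremal''—the intrinsic symplectic definition via $\ker\bar\omega$ and the Hamiltonian flow of $\vec h_1$—which requires a careful unpacking of the tangent space $T\Delta^\perp$ and a case analysis when the bracket $[X_1,X_2]$ has a nontrivial component along $\Delta$; this is however purely linear-algebraic and well-established (see \cite{Hsu92,Mont02}).
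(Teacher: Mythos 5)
The paper does not prove Proposition~\ref{prop:SingAbn}: it is imported verbatim as \Cite{Hsu92}{Theorem 10}. So there is no internal proof to compare against, and your proposal has to stand on its own.

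Your argument is essentially correct and is the standard one. The key ingredients are all there: the variation-of-constants formula for $d_0F$ (which matches \eqref{eq:diff} after identifying $g_t(y)=(P^t)X_2(\gamma(t))$ and $(P^t)X_1(\gamma(t))=X_1(y)$), the definition $\eta(t)=(P^t)^*\lambda$ as the cotangent lift of the flow, the identification of the cotangent lift with the Hamiltonian flow of $h_1$, and the algebraic identity $T_{\eta(t)}\Delta^\perp=\ker d h_1\cap\ker dh_2$ at points of $\Delta^\perp$ away from the zero section (which holds because $\eta$ is nowhere zero, so $dh_1,dh_2$ are independent there). The converse, writing $\dot\eta=c\,\vec h_1+\tilde c\,\vec h_2$ because $\ker\bar\omega\subset(\ker dh_1\cap\ker dh_2)^\angle=\mathrm{span}\{\vec h_1,\vec h_2\}$ and then matching $\pi_*\dot\eta$ with $\dot\gamma$, is also the right move.

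One caveat worth flagging. The statement is for a general admissible singular curve $\gamma\in\Omega$, whereas you specialize from the start to the case where $\gamma$ is an integral curve of $X_1$ (you use the autonomous flow $\Phi_t$ of $X_1$ and the Hamiltonian $h_1$ alone). This is consistent with the conventions set up in \eqref{eq:loc_param} and \eqref{eq:ctrlsystem}, but it is a genuine restriction: a general singular curve comes with a control $(u_1,u_2)$ and need not be an integral curve of any fixed frame field (it need not even be $C^1$). To cover the full statement you would replace $\Phi_t$ by the time-dependent flow of $u_1(t)X_1+u_2(t)X_2$, take $P^t$ to be the corresponding pushforward, and replace $\vec h_1$ by the time-dependent Hamiltonian vector field $u_1(t)\vec h_1+u_2(t)\vec h_2$. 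Every step of your argument goes through verbatim under this substitution, with $\dot\eta(t)=u_1(t)\vec h_1(\eta(t))+u_2(t)\vec h_2(\eta(t))$ in place of $\dot\eta=\vec h_1(\eta)$, and with the converse direction producing $c=u_1$, $\tilde c=u_2$ instead of $c\equiv 1$, $\tilde c\equiv 0$. I would either make the time-dependent version explicit, or state up front that you are proving the result only for curves written as integral curves of $X_1$ (which is the only case the paper actually uses). With that proviso, your proof is sound.
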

	An admissible curve $\gamma$ may be at the same time both normal and singular,
		and we say that an admissible curve $\gamma$ is strictly singular (or strictly abnormal) if, for every $\xi=(\lambda,\lambda_0)\in (T^*_{F(\gamma)}M\times \R)$ such that \eqref{eq:PMP} holds, $\lambda_0=0$.
	For a further discussion on these points, we refer e.g. to \cite{AgraBook,Pontryagin,Rif14}.

	 Let $\gamma\in\Omega$ be a singular curve. It is well-known \cite{AS96,CJT06} that
	 for every abnormal lift
	$\eta:I\to\Delta^\perp$
	 the Goh condition,
	\be\label{eq:Goh_cond}
		\langle \eta(t),[X,Y](\gamma(t))\rangle=0,
	\ee
	holds for every $t\in I$ and every pair $X,Y$ of local smooth sections of $\Delta$, that is $\eta(t)\in(\Delta_{\gamma(t)}^2)^\perp$ for every $t\in I$.

		\begin{defi}\label{defi:nicesing}
		We say that $\gamma\in\Omega$ is a regular singular curve if $\gamma$ has an abnormal lift $\eta:I\to \Delta^\perp$ satisfying
		\be
			\eta(t)\in (\Delta_{\gamma(t)}^2)^\perp\setminus (\Delta_{\gamma(t)}^3)^\perp
		\ee
		for every $t\in I$.
		\end{defi}

	Regular singular curves are smooth \cite[Theorem 4.4]{AS96}. Moreover, for every regular singular curve $\gamma$, there exists $0<s\leq 1$ such that $\gamma_s:=\gamma\big|_{[0,s]}$ is a strict local minimizer for the $W^{1,2}$-topology on the space of admissible curves joining $x_0$ and $\gamma(s)$. This property depends just on the sub-Riemannian structure $(M,\Delta)$, and not on the metric chosen on it.

	\subsubsection{Adapted coordinates}
	
	\label{sec:local} We briefly explain how to put coordinates on $\Omega$, locally around a regular singular curve $\gamma$.
	We assume as in \eqref{eq:loc_param} that there exist a neighborhood $\mc{O}_{\text{supp}(\gamma)}\subset M$ of $\gamma$ and $X_1,X_2$ smooth vector fields on $M$ such that:
	\begin{itemize}
		\item $\gamma$ is an integral curve of $X_1$ associated with the control $(1, 0)$, satisfying $\dot{\gamma}(t)=X_1(\gamma(t))$ for every $t\in I$.
		\item $
	\Delta_x=\mathrm{span}\{X_1(x),X_2(x)\}$, for every $x\in \mc{O}_{\text{supp}(\gamma)}$.
	\end{itemize}
	Horizontal curves $\gamma'$ contained in $\mc{O}_{\text{supp}(\gamma)}$ are described a.e. on $I$ by the solutions of the differential system
	\be\label{eq:controlsyst}
				\dot{\gamma'}(t)=u_1(t)X_1(\gamma'(t))+u_2(t)X_2(\gamma'(t)),\ \ \gamma'(0)=x_0,
	\ee
	where $u\in \mc{U}_1\subset L^2(I,\R^2)$ and the open set $\mc{U}_1\subset L^2(I,\R^2)$ is a neighborhood of $(1,0)$ that consists of all the pairs $(u_1,u_2)$ such that the solution to \eqref{eq:controlsyst} exists for every $t\in I$.
	
	Additionally, for every $x\in\mc{O}_{\mathrm{supp}(\gamma)}$ we endow $\Delta_x$ with the Riemannian metric $g_x$ that makes $X_1(x)$ and $X_2(x)$ orthonormal. Then we see from \eqref{eq:J} that
	\be
		J(\gamma')=\frac{1}{2}\int_0^1|u_1(t)|^2+|u_2(t)|^2dt.
	\ee
	
	\begin{defi}\label{defi:locchar}
		A local chart on $\mc{U}_1$ is the choice of a neighborhood $\mc{V}_1\subset L^2(I,\R^2)$ of zero and a system of coordinates
		\be
		(u_1,u_2)\mapsto (1+v_1,v_2)
		\ee
		on $\mc{U}_1$ and centered at $(1,0)$.
	\end{defi}
	
	With the choice of a local chart, admissible curves are in one-to-one correspondence with integral curves of
	\be\label{eq:contsysttr}
		\dot{x}(t)=(1+v_1(t))X_1(x(t))+v_2(t)X_2(x(t)),\ \ x(0)=x_0,
	\ee
	for a.e. $t\in I$ and $v=(v_1,v_2)\in \mc{V}_1$.
	
	Let $A:\mc{V}_1\to\Omega$ be the map that associates to the pair $(v_1,v_2)\in\mc{V}_1$ the only solution, up to zero-measure sets, to \eqref{eq:contsysttr}.
Then $A$ is a submersion and, slightly abusing of the notation, we define on $\mc{V}_1$
 $F(v):=F(A(v))$ and $J(v):=J(A(v))$, where $F(v)$ and $J(v)$ are given respectively by:
	\be\label{eq:Jcoord}
		\begin{aligned}
			F(v_1,v_2)&=x_0\circ \eexp\int_0^1(1+v_1(t))X_1+v_2(t)X_2dt,\\
			J(v_1,v_2)&=\frac{1}{2}\int_0^1(1+v_1(t))^2+v_2(t)^2 dt.
		\end{aligned}
	\ee
	
	\begin{defi}
	We say that a control $v\in\mc{V}_1$ is singular if
	\be
		d_{v}F=d_{A(v)}F\circ d_vA
	\ee
	is not surjective.
The corank of $v$ is the codimension of $\IM(d_vF)$ in $T_{F(v)}M$.
	\end{defi}

	\subsection{The endpoint map near regular strictly singular curves}
	
	Let $\gamma\in\Omega(y)$ be a reference regular strictly singular curve,
	and let us choose local coordinates
	centered at $(1,0)$, so that $\gamma$ becomes an integral curve of $\dot{x}(t)=X_1(x(t))$, with $\gamma(0)=x_0$.

By the variation of the constants' formula \cite[Formula (2.28)]{AgraBook} we describe, locally around $\gamma$, the endpoint map $F(v_1,v_2)$ as a perturbation of $y=F(0)$. Setting
	\be\label{eq:gt}
	g_t:=e^{(1-t)X_1}_*X_2,\ \ t\in I,
	\ee
	we write:
	\be\label{eq:endp}
	\begin{aligned}
	F(v_1,v_2)&=x_0\circ \eexp\int_0^1(1+v_1(t))X_1+v_2(t)X_2dt\\&=x_0\circ e^{X_1}\circ\eexp \int_0^1v_1(t)X_1+v_2(t)g_tdt\\&=y\circ\eexp \int_0^1v_1(t)X_1+v_2(t)g_tdt.
	\end{aligned}
	\ee

	\subsubsection{First-order conditions}

	 We see from \eqref{eq:endp} that the differential $d_{0}F$  is given by
         (see \cite[Section 4]{AgraBook}):
         \be\label{eq:diff}
		d_{0}F(v)=\int_0^1v_1(t)dtX_1(y)+\int_0^1v_2(t)g_t(y)dt
	\ee
	for every $v\in L^2(I,\R^2)$.
	
	We split the space of controls $L^2(I,\R^2)$ as the orthogonal sum $L^2(I,\R^2)=\ker (d_0F)\oplus E$,
	where $E$ is a finite-dimensional complement of $\ker (d_0F)$, isomorphic to $\IM(d_0F)$ via the differential $d_0F$.

	Given a linear functional $L:L^2(I,\R^2)\to \R$, its gradient $\nabla_0 L\in L^2(I,\R^2)$ is defined by the equality
	\be
		\left(\nabla_0 L,v\right)_{L^2(I,\R^2)}=d_0 L(v)
	\ee
	for every $v\in L^2(I,\R^2)$, where $(\cdot,\cdot)_{L^2(I,\R^2)}$ is the standard scalar product on $L^2(I,\R^2)$.
	
	\begin{lemma}\label{lemma:what}
		Let $\gamma$ be a
		regular strictly singular curve. Then $\mathcal{J}:=\proj_{\ker (d_0 F)}\nabla_{0}J$ is not zero.
	\end{lemma}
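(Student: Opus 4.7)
The plan is to argue by contradiction: assume $\mathcal{J}=0$ and produce a Lagrange multiplier at $0$ with nonzero $\R$-component, contradicting the strict singularity of $\gamma$.

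First, observe that by definition of the gradient, for every $v\in L^2(I,\R^2)$ one has $d_0J(v)=(\nabla_0 J,v)_{L^2(I,\R^2)}$. Consequently $\mathcal{J}=\proj_{\ker(d_0F)}\nabla_0 J=0$ is equivalent to $\nabla_0 J\in\ker(d_0F)^\perp$, i.e.\ to the fact that $d_0J$ vanishes identically on $\ker(d_0F)$.

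Now I would use that vanishing to factorize $d_0J$ through $d_0F$. Since $\ker(d_0J)\supset\ker(d_0F)$, the linear form $d_0J:L^2(I,\R^2)\to\R$ descends to a well-defined linear form on the (finite dimensional) quotient $L^2(I,\R^2)/\ker(d_0F)\simeq \IM(d_0F)\subset T_yM$. Extending this form to any covector $\lambda\in T^*_yM$, one obtains the identity $d_0J=\lambda\, d_0F$ on the whole $L^2(I,\R^2)$, which rewrites as
\be
\lambda\, d_0F + (-1)\, d_0J = 0.
\ee
Thus $\xi=(\lambda,-1)\in T^*_yM\times\R$ is a nonzero Lagrange multiplier for $\mc{F}$ at the control $0$, whose projection onto the $\R$-factor equals $-1\neq 0$. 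This contradicts the assumption that $\gamma$ is strictly singular, and completes the proof.

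The argument is essentially a tautological consequence of the definitions, so there is no serious obstacle; the only subtle point is to make sure that the orthogonal projection onto $\ker(d_0F)$ characterizes the vanishing of $d_0J$ on that kernel, which follows immediately from the Riesz representation on $L^2(I,\R^2)$.
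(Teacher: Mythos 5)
Your proof is correct, and it takes a genuinely more direct route than the paper's. You reduce the assumption $\mathcal{J}=0$ to the inclusion $\ker(d_0F)\subset\ker(d_0J)$, factor the bounded functional $d_0J$ through the finite-rank map $d_0F$ (the quotient $L^2(I,\R^2)/\ker(d_0F)\simeq\IM(d_0F)$ is finite dimensional, so the induced functional extends to a covector $\lambda\in T_y^*M$), and then read off the multiplier $\xi=(\lambda,-1)$ with $\xi\,d_0\mc{F}=\lambda\,d_0F-d_0J=0$. Since the $\R$-component is $-1\neq 0$, this contradicts strict singularity; nothing else is needed, and in fact the regularity hypothesis plays no role in your argument. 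The paper instead argues constructively: it splits the control space into the slots $L^2(I,\R)\oplus\{0\}$ and $\{0\}\oplus L^2(I,\R)$, proves (via its own normal-multiplier contradiction) that controls supported in the $W_2$-component already span $\IM(d_0F)$, and from this exhibits an explicit element $\widehat{w}=(1,-w_2^0)\in\ker(d_0F)$ pairing nontrivially with $\nabla_0J=(1,0)$. The paper's longer path has the side benefit of producing the explicit splitting $\ker(d_0F)=Z_1\oplus\R\mathcal{J}\oplus V_2$ recorded in Corollary~\ref{cor:dec1}; your argument proves the lemma as stated more economically but does not yield that decomposition as a byproduct.
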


	\begin{proof}
		Consider the orthogonal decomposition
		\be
			L^2(I,\R^2)=\big( C_1\oplus Z_1 \big)\oplus \big( V_2\oplus W_2 \big),
		\ee
	  	where $V_2:=\pi_{\{0\}\oplus L^2(I,\R)}\ker(d_0F)$ is the projection of $\ker(d_0F)$ onto $\{0\}\oplus L^2(I,\R)$, $W_2$ is its complement in $\{0\}\oplus L^2(I,\R)$, i.e. $\{0\}\oplus L^2(I,\R)=V_2\oplus W_2$ and $C_1\oplus Z_1 = L^2(I,\R)\oplus \{0\}$, with $C_1$ the constants and $Z_1$ the controls of mean zero.

		We claim that
		\be\label{eq:claimim}
			\IM (d_0F)=\left\{  d_0F(0,w_2)\mid (0,w_2)\in W_2 \right\},
		\ee
		and we first show how to use this claim to conclude the argument.

		By \eqref{eq:claimim}, there exists $(0,w^0_2)\in W_2$
		such that:
		\be
			d_0F(0,w^0_2)=\int_0^1w^0_2(t)g_t(y)dt=X_1(y)\in\IM (d_0F).
		\ee
		Calling $\widehat{w}:=(1,-w^0_2)\in L^2(I,\R^2)$, we have
		\be\label{eq:kerF}
		\ker (d_0F)=Z_1 \oplus \R\widehat{w} \oplus V_2.
		\ee
		Then, since $\nabla_{0}J=(1,0)$ (compare with \eqref{eq:Jcoord}), we deduce that
		\be
			\mathcal{J}=\proj_{\ker (d_0 F)}\nabla_{0}J=\widehat{w}\neq 0,
		\ee
		and the lemma follows.

		Now we prove the claim, and we reason by contradiction assuming that
		\be
			\mc{S}:=\left\{  d_0F(0,w_2)\mid (0,w_2)\in W_2 \right\}
		\ee
		is a proper subspace of $\IM (d_0F)$. In particular, \eqref{eq:diff} implies that  $X_1(y)$ is the only vector in $\IM (d_0F)$ which is not covered by elements in $\mc{S}$.
		
		Let $\lambda\in T_{F(0)}^*M$ be such that
		\be
		 	\begin{aligned}
				\langle \lambda, d_0F(0,w_2)\rangle&=0 \ \ \text{for every } (0,w_2)\in W_2,\\
				\langle\lambda, X_1(y)\rangle&\neq 0.
			\end{aligned}
		\ee
		Trivially, $\lambda\not\in \IM (d_0F)^\perp$.

		From \eqref{eq:diff} we see that $\{d_0F(v_1,0)\mid v_1\in L^2(I,\R)\}=\mathrm{span}\{d_0F(1,0)\}=\mathrm{span}\{X_1(y)\}$, and that
		\be
			\begin{aligned}
				\langle\lambda, d_0F(v_1,0)\rangle&=v_1\langle \lambda,X_1(y)\rangle\ \ \text{for every } (v_1,0)\in C_1,\\
				\langle\lambda, d_0F(v_1,0)\rangle&=0\ \ \text{for every } (v_1,0)\in Z_1.
			\end{aligned}		
		\ee
		From \eqref{eq:Jcoord}, instead, we have the identity:
		\be
			d_0J(v)=\left(\nabla_{0}J, v\right)_{L^2(I,\R^2)}=\left( 1,v_1\right)_{L^2(I,\R)}=\int_0^1v_1(t)dt.
		\ee
		Choosing $\lambda_0\in \R\setminus\{0\}$ that satisfies:
		\be
			\langle \lambda, X_1(y)\rangle=\langle \lambda, d_0F(1, 0)\rangle=\langle\lambda_0, d_{0}J(1, 0)\rangle=\lambda_0,
		\ee
		we see that $\xi=(\lambda,-\lambda_0)\in T_{F(0)}^*M\times \R$ is a normal covector, in the sense that
		\be
			\xi d_{0}\mc{F}(v)=\lambda d_{0}F(v)-\lambda_0 d_{0}J(v)
		\ee
		for every $v\in L^2(I,\R^2)$. The absurd follows since we assumed $\gamma$ strictly singular.
	\end{proof}
	
	\begin{cor}\label{cor:dec1}
		Let $\gamma$ be a
		regular strictly singular curve. Then the following orthogonal decomposition holds:
		\be\label{eq:splitker}
			L^2(I,\R^2)=\ker(d_0F)\oplus E=Z_1\oplus \R\mathcal{J}\oplus V_2\oplus E,
		\ee
		where:
		\begin{itemize}
			\item  $Z_1\subset L^2(I,\R)\oplus \{0\}$ is the set of controls with zero mean,
			\item $\mathcal{J}=\proj_{\ker(d_0F)}\nabla_0J$, and
			\item  $V_2=\pi_{\{0\}\oplus L^2(I,\R)}\ker(d_0F)$.
		\end{itemize}

		In particular, for the extended endpoint map $\mc{F}=(F,J)$ we have:
		\be
			\begin{aligned}
				\ker(d_0\mc{F})=\ker (d_{0}F)\cap \ker (d_{0}J)=Z_1\oplus V_2,
			\end{aligned}
		\ee
		and $\ker(d_0\mc{F})$ is of codimension one in $\ker (d_{0}F)$.
	\end{cor}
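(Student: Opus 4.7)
The corollary is essentially a repackaging of Lemma~\ref{lemma:what} (specifically of equation~\eqref{eq:kerF}, identifying $\widehat w$ with $\mathcal{J}$), together with a verification of orthogonality and of the effect of $d_0 J$.

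My plan is to proceed in four short steps. First, I would note that the decomposition $L^2(I,\R^2)=\ker(d_0F)\oplus E$ holds by the very definition of $E$ as a finite-dimensional (orthogonal) complement of $\ker(d_0F)$ appearing above Lemma~\ref{lemma:what}. Second, Lemma~\ref{lemma:what} yields $\ker(d_0F)=Z_1\oplus\R\widehat w\oplus V_2$ via \eqref{eq:kerF}, with $\widehat w=(1,-w_2^0)$ and $\mathcal{J}=\widehat w$, so the algebraic direct sum claim is immediate; it only remains to check that the three summands are pairwise orthogonal.

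Third, the orthogonality checks are straightforward from the construction. The spaces $Z_1\subset L^2(I,\R)\oplus\{0\}$ and $V_2\subset \{0\}\oplus L^2(I,\R)$ are orthogonal because they live in complementary factors of $L^2(I,\R^2)$. For $\R\mathcal{J}\perp Z_1$: the first component of $\mathcal{J}$ is the constant $1$, which is $L^2(I,\R)$-orthogonal to the zero-mean functions constituting the first components of elements in $Z_1$, while the second component of $\mathcal{J}$ pairs with the zero second components of $Z_1$. For $\R\mathcal{J}\perp V_2$: the first components pair trivially, and the second component $-w_2^0$ of $\mathcal{J}$ lies in $W_2$, which by construction is the orthogonal complement of $V_2$ in $\{0\}\oplus L^2(I,\R)$.

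Finally, for the statement about $\ker(d_0\mc{F})$, I would argue that on $\ker(d_0F)$ the functional $d_0 J$ is represented, with respect to the $L^2$-inner product, precisely by $\mathcal{J}$: indeed, for every $v\in\ker(d_0F)$ one has $d_0 J(v)=(\nabla_0 J,v)_{L^2(I,\R^2)}=(\mathcal{J},v)_{L^2(I,\R^2)}$ since $\nabla_0 J-\mathcal{J}\perp\ker(d_0F)$. By the orthogonality established in the previous step, $d_0 J$ vanishes on $Z_1\oplus V_2$ and equals $\|\mathcal{J}\|^2\neq 0$ on $\mathcal{J}$ itself. Therefore $\ker(d_0\mc{F})=\ker(d_0F)\cap\ker(d_0 J)=Z_1\oplus V_2$, which has codimension one in $\ker(d_0F)$. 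There is no real obstacle here; the only delicate point is the orthogonality of $\mathcal{J}$ to $V_2$, which is precisely why the splitting $\{0\}\oplus L^2(I,\R)=V_2\oplus W_2$ was introduced inside the proof of Lemma~\ref{lemma:what}.
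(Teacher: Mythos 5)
Your proposal is correct and takes essentially the same route the paper intends: it unpacks \eqref{eq:kerF} from the proof of Lemma~\ref{lemma:what}, identifies $\widehat w$ with $\mathcal{J}$, and the orthogonality claims and the computation of $\ker(d_0\mc{F})$ follow exactly as you describe. The only thing worth remarking is that you correctly pinpoint the one non-automatic orthogonality, $\R\mathcal{J}\perp V_2$, which hinges on the auxiliary splitting $\{0\}\oplus L^2(I,\R)=V_2\oplus W_2$ introduced in the proof of the lemma; the paper treats this as self-evident by calling the statement a corollary, and your write-up supplies precisely the missing details.
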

	
	\subsubsection{Second-order conditions} We introduce in this section the Hessian map
	\be
		\He_0F:\ker(d_0F)\to T_{F(0)}M/ \IM(d_0F)
	\ee
	or, to be more precise, its scalar projections $\{\lambda\He_0F\mid \lambda\in\IM(d_0F)^\perp\}$.

	Given $\lambda\in\IM(d_0F)^\perp$, we have
	\be
		\begin{aligned}
		\lambda\He_0F:\ker(d_0F)&\to \R,\\
					v&\mapsto \left\langle\lambda,\left(\int_0^1\int_0^t(v_1(\tau)X_1+v_2(\tau)g_\tau)\circ (v_1(t)X_1+v_2(t)g_t)d\tau dt\right)(y)\right\rangle.
		\end{aligned}
	\ee

	Moreover, \eqref{eq:diff} plus the fact that
	$\lambda\in\IM (d_0F)^\perp$ imply that
	\be\label{eq:condperp}
		\left\{\begin{aligned}
	\langle\lambda,X_1(y)\rangle &=0,\\
	\langle\lambda,g_t(y)\rangle&\equiv 0 \ \ \text{for every } t\in I,
		\end{aligned}\right.
	\ee
	and then we deduce by \eqref{eq:gt} that
	\be\label{eq:condperp2} 			
	\frac{d}{dt}\langle\lambda,g_t(y)\rangle=-\langle\lambda,[X_1,g_t](y)\rangle\equiv 0
	\ee
	for every $t\in I$, i.e. $\langle\lambda,[X_1,g_t](y)\rangle\equiv 0$ for every $t\in I$.

	Combining \eqref{eq:condperp} and \eqref{eq:condperp2}, the Hessian takes the form \cite[Exercise 20.4]{AgraBook}:
	\be\label{eq:Q}\begin{aligned}
	\lambda\He_0F(v)&=\int_{0}^{1}\left\langle\lambda,\left[\int_0^t v_1(\tau)X_1+v_2(\tau)g_\tau d\tau,v_1(t)X_1+v_2(t)g_t\right](y)\right\rangle dt\\&=\int_{0}^{1}\left\langle\lambda,\left[\int_0^t v_2(\tau)g_\tau d\tau,v_2(t)g_t\right](y)\right\rangle dt
	\end{aligned}\ee
	for every $v\in \ker(d_0F)$.
	
	Notice that the Hessian leads to a second orthogonal decomposition of the control space (compare with \eqref{eq:splitker}):
	\be\label{eq:splitq}
		L^2(I,\R^2)=\ker (d_0F)\oplus E=P(\lambda)\oplus N(\lambda)\oplus Z(\lambda)\oplus E,
	\ee
	where
	\begin{itemize}
		\item  $P(\lambda)$ is the positive eigenspace of $\lambda \He_0F$,
		\item  $N(\lambda)$ is the negative eigenspace of $\lambda \He_0F$,
		\item  $Z(\lambda):=\ker(\lambda \He_0F)$.
	\end{itemize}
	
	\begin{remark}\label{rem:properties_decomposition}
	By \cite[Theorem 7.1]{HestQ}, we assume this decomposition to be also orthogonal with respect to $\lambda\He_0 F$. Notice moreover that $\lambda\He_0 F$ does not depend on $v_1$: this implies that, for every $\lambda\in\IM(d_0F)^\perp$ the subspace $Z_1\subset \ker(d_0F)$, consisting of the controls with zero mean, is contained in $Z(\lambda)$.
	\end{remark}
	
	\begin{remark}
		Since $\gamma$ is strictly singular, we have that $\IM(d_0\mc{F})^\perp=\{(\lambda,0)\mid \lambda\in \IM(d_0F)^\perp\}\subset T^*_{F(0)}M\times \R$. This implies that, given $\lambda\in \IM(d_0F)^\perp$, \eqref{eq:Q} holds also for $\lambda \He_0\mc{F}:\ker(d_0\mc{F})\to \R$,
		the only difference being the domain $\ker(d_0\mc{F})$ strictly smaller than $\ker(d_0F)$.
	\end{remark}
	
	\section{Conjugate points along corank-one regular strictly singular curves}\label{sec:conjpoints}

	\subsection{Completion space and conjugate points}
	Fix a corank-one regular strictly singular curve $\gamma(t):=x_0\circ e^{tX_1}$, $t\in I$. Then $\IM(d_{0}F)^\perp$ has dimension one in $T^*_{F(0)}M$, and is generated by a norm-one covector $\lambda$. In particular, the subspace of covectors $\xi\in T^*_{F(0)}M\times \R$ that satisfy \eqref{eq:PMP} coincides with $\mathrm{span}\{(\lambda,0)\}$, and thus
	$\gamma$ admits a unique extremal lift $\eta$ up to real multiples, which is necessarily abnormal.

	We return to the expression of $F(v)$, this time interpreting it as a perturbation of the flow
	\be
		x_0\circ \eexp \int_0^1 v_2(t) X_2dt=x_0\circ e^{\int_0^1 v_2(t)dt X_2}.
	\ee
	
	By the variation of the constants' formula \cite[Formula (2.27)]{AgraBook}, setting $w_2(t):=\int_0^t v_2(\tau)d\tau$ for every $t\in I$, we have:
	\be\label{eq:changeofcoord}\begin{aligned}
		F(v)&=x_0\circ \eexp\int_0^1(1+v_1(t))X_1+v_2(t)X_2dt\\ &=x_0\circ \eexp\int_0^1(1+v_1(t))e_*^{-w_2(t)X_2}X_1dt\circ e^{w_2(1)X_2}.
	\end{aligned}\ee
	In particular, $F=F(v_1,w_2)$ can be thought as a map on the space $L^2(I,\R)\oplus H^1(I,\R)$. One last application of \cite[Formula (2.28)]{AgraBook} leads to
	\be\label{eq:Gnewcoord}
		\begin{aligned}
			F(v_1,w_2)&=x_0\circ \eexp\int_0^1(1+v_1(t))e_*^{-w_2(t)X_2}X_1-X_1+X_1dt\circ e^{w_2(1)X_2}\\
					 &=y\circ \eexp\int_0^1(1+v_1(t))e_*^{-w_2(t)g_t}X_1-X_1dt\circ e^{w_2(1)X_2},
		\end{aligned}
	\ee
	where now the flow $\eexp\int_0^1(1+v_1(t))e_*^{-w_2(t)g_t}X_1-X_1dt$ is seen as a perturbation of
	\be
		x_0\circ \eexp\int_0^1 X_1dt\circ e^{w_2(1)X_2}=x_0\circ e^{X_1}\circ e^{w_2(1)X_2}=y\circ e^{w_2(1)X_2}.
	\ee

	The first order term in the expansion of $F$, i.e. $d_0F(v_1,w_2)$ is given by:
	\be\label{eq:kerG}
		\begin{aligned}
		d_0F(v_1,w_2)&=\int_0^1 v_1(t)dt X_1(y)+w_2(1)X_2(y)-\int_0^1 w_2(t)[g_t,X_1](y)dt\\
				       &=\int_0^1 v_1(t)dt X_1(y)+w_2(1)X_2(y)-\int_0^1 w_2(t)\dot{g}_t(y)dt,
		\end{aligned}
	\ee
	implying the existence of a constant $C>0$ such that, for every $(v_1,w_2)\in \ker (d_0F)$, one has:
	\be\label{eq:kerest}
		\left| \int_0^1 v_1(t)dt \right|\leq C\|w_2\|_{L^2(I,\R)},\ \ |w_2(1)|\leq C\|w_2\|_{L^2(I,\R)}.	
	\ee
	
	Integrating by parts in \eqref{eq:Q}, we obtain the expression of $\lambda \He_0F$ on $L^2(I,\R)\oplus H^1(I,\R)$:
	\be\label{eq:qfs}
		\begin{aligned}
		\lambda \He_0F(v_1,w_2)&=\int_0^1\left\langle \lambda, \left[w_2(t)g_t-\int_0^tw_2(\tau)\dot{g}_\tau d\tau, v_2(t)g_t\right](y)\right\rangle dt\\
		&=-\int_0^1\left\langle \lambda,\left[w_2(t)\dot{g}_t,\int_t^1v_2(\tau)g_\tau d\tau\right](y)\right\rangle dt\\
		&=-\int_0^1\left\langle\lambda, \left[w_2(t)\dot{g}_t,w_2(1)X_2-w_2(t)g_t-\int_t^1 w_2(\tau)\dot{g}_\tau d
		\tau\right](y)\right\rangle dt\\
		&=\int_0^1\langle \lambda,[\dot{g}_t,g_t](y)\rangle w_2(t)^2dt+\int_0^1\left\langle 				\lambda,\left[w_2(1)X_2+\int_0^tw_2(\tau)\dot{g}_\tau d\tau,w_2(t)\dot g_t\right](y)\right\rangle dt.
	\end{aligned}
	\ee
	
	We claim that, up to considering the covector $-\lambda$ instead of $\lambda$, there exists a constant $\kappa>0$ such that the Legendre condition
	\be\label{eq:kappa}
		\langle \lambda,[\dot{g}_t,g_t](y)\rangle\geq \kappa
	\ee
	holds for every $t\in I$.
	
	In fact, assume \eqref{eq:kappa} to be zero for some $t\in I$. Recalling that for the abnormal lift $\eta:I\to \Delta^\perp$ of $\gamma$ there holds the identity
	\be\label{eq:eta_s}
		\eta(s)=\left(e^{(1-s)X_1}\right)^*\lambda\in \Delta_{\gamma(s)}^\perp\subset T^*_{\gamma(s)}M,
	\ee
	for every $s\in I$, we deduce:
	\be
		\begin{aligned}
		0&=\langle \lambda,[g_t,\dot{g}_t](y)\rangle\\
		  &=\langle \lambda,[e^{(1-t)X_1}_* [X_1,X_2],e^{(1-t)X_1}_* X_2](y)\rangle\\
		  &=\langle \eta(t), [[X_1,X_2],X_2](\gamma(t))\rangle.		
		\end{aligned}
	\ee
	Combining this with \eqref{eq:condperp2}, we conclude that $\eta(t)\in \Delta_{\gamma(t)}^3$, contradicting Definition~\ref{defi:nicesing} and the fact that $\gamma$ is regular.	
	
	We endow $H^1(I,\R)$ with the norm $\|\cdot\|_{2}$ defined by:
	\be\label{eq:norm}
	\left \|w\right\|_{2}=\left| w(1) \right|_{\R}+\|w\|_{L^2(I,\R)},
	\ee
	and we recall that the completion of $H^1(I,\R)$ with respect to $\|\cdot\|_2$ is isomorphic $\R\oplus L^2(I,\R)$.	
	
	By \eqref{eq:Gnewcoord}, $F$ admits a continuous extension $\mathring{F}$ to $L^2(I,\R)\oplus \R\oplus L^2(I,\R)$:
	\be
		\mathring{F}(v,c,w):=y\circ \eexp\int_0^1(1+v(t))e_*^{-w(t)g_t}X_1-X_1dt\circ e^{c X_2},
	\ee
	for every $(v,c,w)\in \mathring{\mc{V}}_1$, where $\mathring{\mc{V}}_1\subset L^2(I,\R)\oplus \R\oplus L^2(I,\R)$ is a suitable local chart around the origin, in the sense of Definition~\ref{defi:locchar}.
	
	Formulas \eqref{eq:kerG} and \eqref{eq:qfs}
	immediately extend to $d_0\mathring{F}$ and $\lambda\He_0\mathring{F}$. For every $(v,c,w)\in\ker(d_0\mathring{F})$ the following holds:
		\be\label{eq:kerestc}
			\left| \int_0^1 v(t)dt \right|\leq C\|w\|_{L^2(I,\R)},\ \ |c|\leq C\|w\|_{L^2(I,\R)}.
		\ee
		
		According to formula \eqref{eq:splitq}, we call $\mathring{P}$ (resp. $\mathring{N}$) the positive (resp. the negative) eigenspace of $\lambda\He_0\mathring{F}$, and $\mathring{Z}:=\ker(\lambda\He_0\mathring{F})$ its kernel.
		
		We deduce from \eqref{eq:qfs} that $\lambda\He_0\mathring{F}$ does not depend on the $v$-coordinate, implying that the subspace
		\be
			Z_1:=\left\{(v,0,0)\mid
			v\in L^2(I,\R),\, \int_0^1v(t)dt=0\right\}\subset \ker(d_0\mathring{F})
		\ee
		is contained in $\mathring{Z}$.
		Observe that this notation is consistent with that of Corollary~\ref{cor:dec1}.

	\begin{defi}[Conjugate points]\label{defi:conj}
		Let $\gamma$ be a corank-one regular strictly singular curve. We say that $y=\gamma(1)$ is a conjugate point along
		$\gamma$ if
	$Z_1$ is a proper subspace of $\mathring{Z}$. We call $k:=\dim(Z_1')$ the multiplicity of $y$ as a conjugate point, where $Z_1'$ is the $(L^2(I,\R)\oplus \R\oplus L^2(I,\R))$-orthogonal complement of $Z_1$ in $\mathring{Z}$, i.e. $Z_1'$ satisfies:
	\be
		\mathring{Z}=Z_1\oplus Z_1'.
	\ee
	\end{defi}
	
	The following proposition collects some useful results from \cite[Theorem 1]{Sarysec}.
	
	\begin{prop}\label{prop:ConjProp}
		Let $\gamma$ be a corank-one regular strictly singular curve. Defining, for $s\in I$, $\gamma_s:=\gamma\big|_{[0,s]}$ and $\eta(s)$ as in \eqref{eq:eta_s}, we have:
		\begin{itemize}
			\item [(i)] There exists $s_0\in I$ such that the Hessian map $\eta(s)\He_{\gamma_s} \mathring{F}$ is positive definite for every $s\le s_0$.
			\item [(ii)] Conjugate points along $\gamma$ are isolated, and every conjugate point has a finite multiplicity.
			\item[(iii)] The negative index of $\lambda\He_0\mathring{F}$ equals the sum of the multiplicities of all conjugate points along $\gamma$. In particular, it is finite.
		\end{itemize}
	\end{prop}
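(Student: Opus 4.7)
\medskip

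\noindent\textbf{Proof proposal for Proposition \ref{prop:ConjProp}.}

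The engine behind all three items is the fact that, in the coordinates $(v,c,w)\in L^{2}(I,\R)\oplus\R\oplus L^{2}(I,\R)$, the quadratic form $\lambda\He_0\mathring{F}$ given by \eqref{eq:qfs} does not depend on $v$ and, in the variables $(c,w)$, can be written as
\be
\lambda\He_0\mathring{F}(v,c,w)=\int_0^1\kappa_t\,w(t)^2\,dt+B(c,w),\qquad \kappa_t:=\langle\lambda,[\dot g_t,g_t](y)\rangle,
\ee
where by the Legendre condition \eqref{eq:kappa} one has $\kappa_t\ge\kappa>0$, and $B$ is a continuous bilinear form whose $(w,w)$-block is an integral operator with smooth kernel (hence compact on $L^2$). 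Thus the self-adjoint operator $\mathcal A$ on $L^2(I,\R)$ representing the Hessian, restricted to the closed subspace of admissible $(c,w)$ cut out by the kernel relations \eqref{eq:kerG}, has the form $\text{``positive + compact''}$: its essential spectrum lies in $[\kappa,+\infty)$, and only finitely many eigenvalues can fall below $\kappa$. This is the Fredholm setting in which a Morse-type index calculus is available, and it immediately yields the finiteness claims in (ii) and (iii).

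For part (i), I would apply the identity above on $[0,s]$ and bound the cross terms of $B$ using \eqref{eq:kerestc}. The $c$-contribution is of the form $c\int_0^s\langle\lambda,[X_2,\dot g_t]\rangle w(t)\,dt$; by Cauchy-Schwarz and $|c|\le C\|w\|_{L^2([0,s])}$ this is $O(\sqrt s)\|w\|_{L^2([0,s])}^{\,2}$. The iterated term $\int_0^s\langle\lambda,[\int_0^t w\dot g_\tau d\tau,w\dot g_t]\rangle dt$ is similarly $O(s)\|w\|^2$ after estimating the inner integral by $\sqrt{t}\|w\|_{L^2([0,s])}$. Hence for $s\le s_0$ small enough,
\be
\lambda\He_{\gamma_s}\mathring{F}(v,c,w)\ \ge\ \bigl(\kappa-C\sqrt{s}\bigr)\|w\|_{L^2([0,s])}^{\,2}\ \ge\ \tfrac{\kappa}{2}\|w\|_{L^2([0,s])}^{\,2},
\ee
and since \eqref{eq:kerestc} forces $w=0\Rightarrow c=0,\ \int v=0$, the form is positive definite modulo $Z_1$, i.e.\ $\mathring Z=Z_1$ on $[0,s_0]$.

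For (ii) and (iii), I would track the smooth one-parameter family of Fredholm quadratic forms $s\mapsto\mathcal A_s$ obtained by running the above construction on $[0,s]$. By (i), $\mathcal A_s$ is positive (modulo $Z_1$) for small $s$, so the negative index starts at $0$ and the kernel-mod-$Z_1$ starts trivial. Since $\mathcal A_s$ has only finitely many eigenvalues in $(-\infty,\kappa)$ varying continuously in $s$, the negative index can only change at times where one of these eigenvalues crosses zero; at such a time $s_*$ the kernel is finite-dimensional (giving the finite multiplicity in (ii)), and standard finite-dimensional Morse analysis on the spectral subspace yields a jump of the index equal to $\dim\ker\mathcal A_{s_*}=\dim Z_1'(s_*)$. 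Summing these jumps as $s$ runs over $[0,1]$ gives (iii). The delicate step — which is the real content of \cite[Theorem 1]{Sarysec} — is the transversality of the zero-crossings, i.e.\ the fact that the eigenvalue branches actually pass through $0$ with non-zero speed in $s$, so that conjugate times are isolated and the index does not merely oscillate. This uses in an essential way the strict inequality $\kappa_t>0$ along the regular biextremal $\eta$ and a non-degeneracy identity of ``constraint Jacobi equation'' type for elements of $Z_1'(s_*)$; this is the part I would expect to consume the most technical work, and where the full force of the regularity hypothesis on $\gamma$ enters.
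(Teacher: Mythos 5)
The paper does not actually prove Proposition~\ref{prop:ConjProp}: it is stated with the remark that it ``collects some useful results from \cite[Theorem 1]{Sarysec}'' and no argument is given. Your sketch is a sensible roadmap for what that proof looks like, and it isolates the right structural ingredients. For (i), your bound is correct modulo bookkeeping: the constant in \eqref{eq:kerestc}, when restricted to $[0,s]$, is itself $O(\sqrt{s})$ by Cauchy--Schwarz, while the Legendre lower bound $\kappa$ is uniform in $s$ because $\eta(t)\notin(\Delta^3_{\gamma(t)})^\perp$ on all of $I$; so the cross terms are controlled and the form is positive definite modulo $Z_1$ for small $s$. For (ii)--(iii), the ``Legendre-positive diagonal plus compact'' decomposition (the content of Proposition~\ref{prop:operators}) does put you in the Fredholm setting where the negative index is finite and eigenvalues below $\kappa$ are discrete. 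The one step you should be careful not to call ``standard finite-dimensional Morse analysis'' is the jump formula in (iii): knowing that $\dim\ker\mathcal A_{s_*}=\dim Z_1'(s_*)$ is finite tells you that \emph{at most} that many eigenvalue branches can change sign at $s_*$, but not the direction of the change. To get that the index \emph{increases} by exactly $\dim Z_1'(s_*)$ — which is also what makes (ii), the isolation of conjugate times, work (an eigenvalue could otherwise vanish on an interval) — one needs monotonicity of the eigenvalue flow, i.e.\ transversality of the zero-crossings. You flag this correctly as the real content of \cite[Theorem 1]{Sarysec}. So: your sketch is sound, and the crucial non-trivial step is precisely the one you outsource to Sarychev, whereas the paper outsources the entire statement.
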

	
	From now on, we assume $y$ to be not conjugate along $\gamma$, i.e. following Definition~\ref{defi:ranktwonice}, we
	suppose $\gamma$ to be rank-two-nice, and thus
	\be\label{eq:eq_kers}
		\mathring{Z}=\ker(\lambda \He_0\mathring{F})=\left\{(v,0,0)\mid
		v\in L^2(I,\R),\, \int_0^1v(t)dt=0\right\}=Z_1.
	\ee
 	
	\subsection{Analytical properties of the Hessian map}\label{sec:an_prop}
	We study in this section how $\lambda \He_0\mathring{F}$ behaves on the subspace $\mathring{P}\oplus \mathring{N}\subset L^2(I,\R)\oplus\R\oplus L^2(I,\R)$, equipped with the product norm and a scalar product $(\cdot,\cdot)$ induced by the norm.		
	\begin{prop}\label{prop:operators}
		Let $T$ be the linear operator on $\mathring{P}\oplus \mathring{N}$ defined by the equality
	\be
			\left(T(v,c,w),(v,c,w)\right):=\int_0^1\left\langle \lambda,\left[cX_2+\int_0^tw(\tau)\dot{g}_\tau d\tau,w(t)\dot g_t\right](y)\right\rangle dt.
	\ee
	for every $(v,c,w)\in \mathring{P}\oplus \mathring{N}$. Then $T$ is a compact and self-adjoint operator.
\end{prop}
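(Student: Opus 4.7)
The plan is to obtain $T$ from the Riesz representation theorem applied to the polarization of the given quadratic form, so that self-adjointness comes for free, and then to isolate compactness as the only nontrivial content.

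First, writing $Q(v,c,w):=(T(v,c,w),(v,c,w))$ and expanding the outer Lie bracket inside the integrand, I would split
\begin{equation*}
Q(v,c,w) \;=\; c\int_0^1 h(t)\,w(t)\,dt \;+\; \iint_{\{0\le\tau\le t\le 1\}} k(t,\tau)\, w(t)\, w(\tau)\,d\tau\,dt,
\end{equation*}
where $h(t):=\langle\lambda,[X_2,\dot g_t](y)\rangle$ and $k(t,\tau):=\langle\lambda,[\dot g_\tau,\dot g_t](y)\rangle$ are smooth on $I$ and $I\times I$ respectively. A direct Cauchy--Schwarz estimate then yields $|Q(v,c,w)|\le C(|c|\,\|w\|_{L^2}+\|w\|_{L^2}^2)\le C'\|(v,c,w)\|^2$, so $Q$ is bounded on the ambient space $L^2(I,\R)\oplus\R\oplus L^2(I,\R)$. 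Its polarization is a continuous symmetric bilinear form, and Riesz representation produces the unique bounded operator $T$ satisfying the required identity; the symmetry forces $T=T^\ast$. Restricting to the closed subspace $\mathring{P}\oplus\mathring{N}$ equipped with the inherited scalar product preserves self-adjointness.

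For compactness, I would exploit the splitting $T=T_1+T_2$ dictated by the two pieces of $Q$. The polarization of the first piece couples only the one-dimensional $c$-coordinate with a single direction in $L^2(I,\R)$, so $T_1$ has rank at most two and is compact. The polarization of the second piece is the integral operator on $L^2(I,\R)$ whose kernel is the symmetrization of $k(t,\tau)\chi_{\{\tau\le t\}}$; since $k$ is bounded on $I\times I$, this kernel lies in $L^2(I\times I)$, so the operator is Hilbert--Schmidt and in particular compact. The sum $T_1+T_2$ is then compact on $L^2(I,\R)\oplus\R\oplus L^2(I,\R)$, and restriction to the closed invariant subspace $\mathring{P}\oplus\mathring{N}$ remains compact.

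I do not expect a genuine obstacle here: the one piece of real bookkeeping is to track the polarization carefully, so that the $c$-part really yields a finite-rank operator and the $w$-part really yields an integral operator with a bounded kernel; once the two pieces are identified, compactness reduces to the classical facts that finite-rank and Hilbert--Schmidt operators are compact, and that compactness is preserved under sums and under restriction to closed subspaces.
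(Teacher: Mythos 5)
Your argument follows essentially the paper's route: polarize the quadratic form, isolate a finite-rank piece from the $c$-coupling and a Hilbert--Schmidt integral operator with an $L^2$ kernel on $I\times I$ built from $\langle\lambda,[\dot g_\tau,\dot g_t](y)\rangle$, and conclude. The only small imprecision is describing $\mathring{P}\oplus\mathring{N}$ as an \emph{invariant} subspace of the ambient operator, which need not hold; what is actually used is the compression $\proj_{\mathring{P}\oplus\mathring{N}}\circ T'\big|_{\mathring{P}\oplus\mathring{N}}$, exactly as the paper writes, and self-adjointness and compactness pass to compressions just as well.
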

	
	\begin{proof}
		To polarize $T$, we recall the identity
		\be
			\int_0^1\left\langle \lambda,\left[\int_0^t a(\tau)X_\tau d\tau,b(t)Y_t \right](y)\right\rangle dt=\int_0^1\left\langle \lambda,\left[ a(t)X_t,\int_t^1b(\tau)Y_\tau d\tau \right](y)\right\rangle dt,
		\ee
		which is valid for every $t\in I$, $a,b\in L^2(I,\R)$ and vector fields $X_t, Y_t\in\Vc(M)$.

		We deduce that $T=\proj_{\mathring{P}\oplus \mathring{N}}T'$, where:
	\be\label{eq:decomp_operators}\begin{aligned}
			T':\mathring{P}\oplus\mathring{N}&\to L^2(I,\R)\oplus \R\oplus L^2(I,\R),\\
			(v,c,w)&\mapsto \begin{pmatrix} 0 \\ \int_0^1 \left\langle \lambda,[X_2,\dot g_t](y)\right\rangle w(t)dt \\ c\langle \lambda,[X_2,\dot g_t](y)\rangle+\left\langle \lambda,\left[\int_0^t w(\tau)\dot{g}_\tau d\tau,\dot g_t \right](y) \right\rangle-\left\langle \lambda,\left[\int_t^1w(\tau)\dot g_\tau d\tau,\dot{g}_t\right](y)  \right\rangle\end{pmatrix}
	\end{aligned}\ee
	for every $(v,c,w)\in \mathring{P}\oplus \mathring{N}$, and $\proj_{\mathring{P}\oplus \mathring{N}}$ stands for the projection of $L^2(I,\R)\oplus \R\oplus L^2(I,\R)$ onto $\mathring{P}\oplus \mathring{N}$.

	The self-adjointness of $T$ is clear, after all it is a linear operator associated with a bilinear form.
	
	Let
	\be
		K(t,\tau):=\langle \lambda,[\dot{g}_\tau,\dot g_t](y)\rangle\chi_{[0,t]}(\tau)-\langle \lambda,[\dot g_\tau,\dot{g}_t](y)\rangle\chi_{[t,1]}(\tau),\ \ K(t,\tau)\in L^2(I^2,\R).
	\ee
	To prove the compactness of $T$, it suffices to show that
	the mapping in the last component of $T'$,
		\be\begin{aligned}
			T'_3:\mathring{P}\oplus\mathring{N}&\to L^2(I,\R),\\
			(v,c,w)&\mapsto c\langle \lambda,[X_2,g_t](y)\rangle+\int_0^1K(t,\tau)w(\tau)d\tau,
			\end{aligned}
		\ee
		is compact. In fact, $(v,c,w)\mapsto c\langle \lambda,[X_2,\dot g_t](y)\rangle$ is a rank-one operator, while the compactness of
		\be
			(v,c,w)\mapsto \int_0^1K(t,\tau)w(\tau)d\tau
		\ee
		is classical, and proved e.g. in \cite[Chapter 6]{HirschLacombe}. The thesis follows.
	\end{proof}

	For our next result we define $(\lambda \He_0\mathring{F})_{\mathring{P}}$ to be the restriction of $\lambda \He_0\mathring{F}$ to its positive eigenspace $\mathring{P}$, and recall that if $(v,c,w)\in\mathring{P}$, then $v\in L^2(I,\R)$ is constant by \eqref{eq:eq_kers}.
	
	\begin{prop}\label{prop:positive}
		There exists a constant $K>0$ such that,
		\be
			(\lambda \He_0\mathring{F})_{\mathring{P}}(v,c,w)\ge K\|(v,c,w)\|^2
		\ee
		for every $(v,c,w)\in\mathring{P}$.
	\end{prop}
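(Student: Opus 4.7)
The plan is to split $\lambda\He_0\mathring{F}$ into a Legendre-type principal part and the compact perturbation of Proposition~\ref{prop:operators}. By \eqref{eq:qfs} one may write $\lambda\He_0\mathring{F}=Q_1+Q_2$, where
\be
Q_1(v,c,w):=\int_0^1\langle\lambda,[\dot{g}_t,g_t](y)\rangle w(t)^2\,dt\ge \kappa\|w\|_{L^2(I,\R)}^2
\ee
by the Legendre condition \eqref{eq:kappa}, and $Q_2(v,c,w):=(T(v,c,w),(v,c,w))$ is the bounded bilinear form associated with the compact self-adjoint operator $T$.

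The first step is to show that $Q_1$ is coercive with respect to the full product norm on $\mathring{P}\oplus\mathring{N}$. Since this subspace is the orthogonal complement, in $\ker(d_0\mathring{F})$, of $\mathring{Z}=Z_1$, and since $Z_1$ consists of triples with mean-zero first component and vanishing other components, every $(v,c,w)\in\mathring{P}\oplus\mathring{N}$ must have $v\equiv v_0\in\R$. Combining this constancy with \eqref{eq:kerestc} yields $|v_0|+|c|\le 2C\|w\|_{L^2(I,\R)}$, hence $\|(v,c,w)\|^2\le(2C^2+1)\|w\|_{L^2(I,\R)}^2$, so
\be
Q_1(v,c,w)\ge K_0\|(v,c,w)\|^2 \text{ on } \mathring{P}\oplus\mathring{N}, \qquad K_0:=\kappa/(2C^2+1).
\ee

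Then I conclude by contradiction. Suppose there is a sequence $\phi_n\in\mathring{P}$ with $\|\phi_n\|=1$ and $\lambda\He_0\mathring{F}(\phi_n)\to 0$. After extracting a weakly convergent subsequence $\phi_n\rightharpoonup\phi_\infty\in\mathring{P}$ (the positive spectral subspace is norm-closed, hence weakly closed), the compactness of $T$ yields $Q_2(\phi_n)\to Q_2(\phi_\infty)$, while weak lower semicontinuity of the nonnegative form $Q_1$ gives $\liminf Q_1(\phi_n)\ge Q_1(\phi_\infty)$. Passing to the limit one obtains $\lambda\He_0\mathring{F}(\phi_\infty)\le 0$, which together with $\phi_\infty\in\mathring{P}$ forces $\phi_\infty=0$. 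But then $Q_2(\phi_n)\to 0$, so $Q_1(\phi_n)\to 0$, and the coercivity from the first step forces $\phi_n\to 0$ strongly, contradicting $\|\phi_n\|=1$.

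The main obstacle is the first step: $Q_1$ only controls $\|w\|_{L^2(I,\R)}^2$, so coercivity in the full norm relies essentially on both the orthogonality characterization of $\mathring{P}\oplus\mathring{N}$ (which kills non-constant first components) and the kernel estimate \eqref{eq:kerestc} --- that is, on the rank-two-nice assumption, which through Proposition~\ref{prop:ConjProp} pins $\mathring{Z}$ down to $Z_1$.
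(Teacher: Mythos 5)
Your proof is correct, and it relies on exactly the same structural ingredients as the paper's argument: the splitting $\lambda\He_0\mathring{F}=Q_1+Q_2$ with $Q_1$ the Legendre part and $Q_2=(T\cdot,\cdot)$ from Proposition~\ref{prop:operators}; the Legendre constant $\kappa$ from \eqref{eq:kappa}; the kernel estimates \eqref{eq:kerestc} combined with the constancy of $v$ on $\mathring{P}\oplus\mathring{N}$ (which follows from $\mathring{Z}=Z_1$, i.e.\ the rank-two-nice hypothesis); and the strict positivity of the form on $\mathring{P}$ coming from the Hestenes decomposition. Where you differ from the paper is in the route from ``$\inf_{\|\phi\|=1,\,\phi\in\mathring{P}}\lambda\He_0\mathring{F}(\phi)=0$'' to a contradiction. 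The paper invokes the spectral theorem for the compact self-adjoint $T_{\mathring{P}}$: the bottom of $\sigma(T_{\mathring{P}})$ is an eigenvalue, so there is a norm-one minimizer on which the form vanishes, and then \cite[Lemma 6.2]{HestQ} together with the $\lambda\He_0\mathring{F}$-orthogonality of $\mathring{P},\mathring{N},\mathring{Z}$ places that minimizer in $\mathring{Z}$, an absurdity. You instead run a direct (Tonelli-style) minimizing-sequence argument: extract a weak limit $\phi_\infty\in\mathring{P}$, use compactness of $T$ to pass to the limit in $Q_2$ and weak lower semicontinuity in $Q_1$, conclude $\phi_\infty=0$ from strict positivity on $\mathring{P}$, and then upgrade the weak convergence to strong convergence via the $Q_1$-coercivity you established in the first step. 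Your version avoids explicitly invoking the Fredholm alternative and \cite[Lemma 6.2]{HestQ}, trading them for the elementary weak-compactness and semicontinuity machinery; it also has the small advantage of being stated entirely in the product norm, sidestepping the implicit weighted-norm normalization that makes the paper's line ``$\alpha=1+\inf(T_{\mathring{P}}\cdot,\cdot)$'' slightly opaque. Both are valid; yours is a legitimate alternative.
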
	
	
	\begin{proof}
		Let
		$T_{\mathring{P}}$ denote the restriction of
		$T$ to $\mathring{P}$. Define
		\be\begin{aligned}
		\alpha:&=\inf\left\{ (\lambda \He_0\mathring{F})_{\mathring{P}}(v,c,w)\mid (v,c,w)\in\mathring{P},\ \ \|(v,c,w)\|^2=1 \right\}\\
			  &=
		1+\inf\left\{ \left(  T_{\mathring{P}}(v,c,w),(v,c,w)\right)\mid (v,c,w)\in\mathring{P},\ \ \|(v,c,w)\|^2=1 \right\}.
		\end{aligned}\ee
		
		Clearly $\alpha\geq 0$. We claim that, in fact, $\alpha>0$. If this is true, by \eqref{eq:kappa}, \eqref{eq:kerestc} and the fact that $v$ is constant, denoting $\ell:=\max_{t\in I}\left\langle \lambda,[\dot{g}_t,g_t](y)   					\right\rangle>0$ we have:
		\be
			\begin{aligned}
			(\lambda \He_0\mathring{F})_{\mathring{P}}(v,c,w)&\ge \frac{\alpha}{\ell}\int_0^1\left\langle \lambda,[\dot{g}_t,g_t](y)   					\right\rangle w(t)^2dt
			\\ &\ge \frac{\kappa\alpha}{\ell}\|w\|^2_{L^2(I,\R)}\ge \frac{\kappa\alpha}{\ell}\left( \frac{\|w\|^2_{L^2(I,\R)}}{3} +\frac{|c|^2}{3C^2}+\frac{\left|\int_0^1 v(t)dt\right|^2}{3C^2}\right)
			\\ &\ge\frac{\kappa\alpha}{\ell} \min\left\{\frac{1}{3},\frac{1}{3C^2}\right\}\|(v,c,w)\|^2,
			\end{aligned}
		\ee
		and we conclude with $K:=\frac{\kappa\alpha}{\ell} \min\left\{\frac{1}{3},\frac{1}{3C^2}\right\}$.
		
		Now we prove the claim, assuming by contradiction that $\alpha=0$. Notice that $T_{\mathring{P}}$ is
compact and self-adjoint by Proposition~\ref{prop:operators}.
 Its eigenvalues are bounded, countable, and can only accumulate at zero (see e.g., \cite{Kato}).

 Clearly,
		\be
			-1=\inf\left\{ \left(  T_{\mathring{P}}(v,c,w),(v,c,w)\right)\mid (v,c,w)\in\mathring{P},\ \ \|(v,c,w)\|^2=1 \right\},
		\ee
		and therefore $-1$ coincides with the lowest bound of the spectrum $\sigma(T_{\mathring{P}})$, which is actually an eigenvalue by the Fredholm alternative. Then, we deduce the existence of $(v,c,w)\in\mathring{P}$ such that $\|(v,c,w)\|^2=1$ and such that
\be
	(\lambda \He_0\mathring{F})_{\mathring{P}}(v,c,w)=0.
\ee

Since $(\lambda \He_0\mathring{F})_{\mathring{P}}$ is positive on $\mathring{P}$, by \cite[Lemma 6.2]{HestQ} the linear application\footnote{By a slight abuse of the notation, we identify $(\lambda \He_0\mathring{F})_{\mathring{P}}$ and its associated bilinear map.}
\be
	(\lambda\He_0F)_{\mathring{P}}((v,c,w),\cdot ):\mathring{P}\to \R
\ee
is the zero map. Since $\mathring{P}$, $\mathring{N}$ and $\mathring{Z}$ are orthogonal with respect to $\lambda\He_0F$ (compare with Remark~\ref{rem:properties_decomposition}), we conclude that $(v,c,w)\in \mathring{Z}$, whence the absurd. 
	\end{proof}

	Proposition~\ref{prop:positive} implies that the eigenvalues of $(\lambda \He_0\mathring{F})_{\mathring{P}}$ do not accumulate towards zero on $\mathring{P}$. Since this is trivially true on the finite dimensional negative eigenspace $\mathring{N}$ (see (iii) of Proposition~\ref{prop:ConjProp}), we deduce the following result.
	
	\begin{prop}\label{prop:invertible}
		Let $(\lambda \He_0\mathring{F})_{\mathring{P}\oplus \mathring{N}}$ denote the restriction of $\lambda \He_0\mathring{F}$ to $\mathring{P}\oplus \mathring{N}$, and let $L:\mathring{P}\oplus \mathring{N}\to\mathring{P}\oplus \mathring{N}$ be the linear operator satisfying
		\be
			\left\langle L(v,c,w),(v,c,w)\right\rangle=(\lambda \He_0\mathring{F})_{\mathring{P}\oplus \mathring{N}}(c,w)
		\ee
		for every $(v,c,w)\in\mathring{P}\oplus \mathring{N}$. Then $L$ is bounded from below, hence invertible, on $\mathring{P}\oplus \mathring{N}$.
	\end{prop}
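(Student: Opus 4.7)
The plan is to exploit the double orthogonality of $\mathring P$ and $\mathring N$: with respect to the ambient scalar product they are perpendicular because they are the strictly positive and strictly negative spectral subspaces of the self-adjoint operator representing $\lambda\He_0\mathring F$; with respect to $\lambda\He_0\mathring F$ itself they are orthogonal by Remark~\ref{rem:properties_decomposition}. Combined with the self-adjointness of $L$, I would use this to show that $L$ respects the splitting $\mathring P\oplus\mathring N$: for $p\in\mathring P$ and $n\in\mathring N$, the identity $(Lp,n)=(\lambda\He_0\mathring F)(p,n)=0$, together with the scalar-product orthogonality $\mathring N^\perp\cap(\mathring P\oplus\mathring N)=\mathring P$, forces $Lp\in\mathring P$, and by self-adjointness symmetrically $Ln\in\mathring N$. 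It then suffices to bound $L|_{\mathring P}$ and $L|_{\mathring N}$ from below separately.

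On the positive block, Proposition~\ref{prop:positive} directly gives $(Lp,p)\ge K\|p\|^2$ for some $K>0$; since $L|_{\mathring P}$ is self-adjoint, a single application of Cauchy--Schwarz upgrades this to $\|L|_{\mathring P}(p)\|\ge K\|p\|$. On the negative block, part~(iii) of Proposition~\ref{prop:ConjProp} guarantees that $\mathring N$ is finite-dimensional, while by construction $\lambda\He_0\mathring F$ is negative definite on $\mathring N$; compactness of the unit sphere in $\mathring N$ then produces $K'>0$ with $(Ln,n)\le -K'\|n\|^2$, and the same Cauchy--Schwarz step yields $\|L|_{\mathring N}(n)\|\ge K'\|n\|$.

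To conclude, I would decompose an arbitrary $x=p+n\in\mathring P\oplus\mathring N$; the double orthogonality together with the block-diagonal structure of $L$ gives
\[
\|Lx\|^2=\|L|_{\mathring P}(p)\|^2+\|L|_{\mathring N}(n)\|^2\ge K^2\|p\|^2+K'^2\|n\|^2\ge \min(K,K')^2\|x\|^2,
\]
so $L$ is bounded from below, and hence invertible. The step I expect to require the most care is the block-diagonality of $L$, since it relies on reconciling the two different orthogonalities (that of the bilinear form and that of the inner product); once this is in place, the quantitative lower bound follows essentially for free from Proposition~\ref{prop:positive} together with the finite-dimensional negative-definite case.
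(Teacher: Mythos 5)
Your proof is correct and follows essentially the same route as the paper: the paper also derives invertibility by combining Proposition~\ref{prop:positive} for the positive block with finite-dimensionality of $\mathring{N}$ (Proposition~\ref{prop:ConjProp}(iii)) for the negative block, stated there in the spectral language that eigenvalues of $L$ cannot accumulate at zero. You have merely made explicit the block-diagonal structure of $L$ (which the paper leaves implicit, justified by Remark~\ref{rem:properties_decomposition}) and the Cauchy--Schwarz step; no genuinely different idea is involved.
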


	\section{A preliminary change of coordinates}\label{sec:deformation}
		
	We turn in this section to the Banach space $\BS$.
		
	\subsection{Cutting the kernel of the Hessian map} Let us define the subspace
	\be\label{eq:Banspacezero}
		L^\infty_Z(I,\R):=\left\{ v \in L^\infty(I,\R)\mid \int_0^1v(t)dt=0  \right\}\subset L^\infty(I,\R).
	\ee
	
	Every element $v\in L^\infty(I,\R)$ decomposes as $v=v^C+v^Z$, where
	\begin{itemize}
		\item  $v^Z\in L^\infty_Z(I,\R)$,\vspace{0.1cm}
		\item $v^C=\int_0^1v(t)dt\in \R$,
	\end{itemize}
	and $v^C$ and $v^Z$ are orthogonal with respect to the $L^2(I,\R)$-product.
	Accordingly, we fix coordinates $(v_1^C,v_1^Z,v_2)$ on $\BS$ throughout this section. 
	
	\noindent Given $\alpha>0$, we consider the open sets $\mc{V}_2,\mc{V}_3\subset \BS$ given by:
	\be\label{eq:vsets}
		\begin{aligned}
			\mc{V}_2&=\left\{v\in \BS\mid 1+v_1^Z> \alpha,\ \ \text{a.e. on } I\right\},\\
			\mc{V}_3&=\left\{w\in \BS\mid 1+\frac{w_1^Z}{1+w^C_1}> \alpha,\ \ \text{a.e. on } I\right\}.
	\end{aligned}
	\ee
	
	\begin{defi} Let us set, for every $v\in \BS$ and every $t\in I$,
	\be
			\phi_v(t):=\int_0^t1+v^Z_1(\tau)d\tau.
		\ee
		
		Then we define the origin-preserving map
		\be\label{eq:rhopar}
			\begin{aligned}
				\rho:\mc{V}_2&\to \mc{V}_3\\
				\left(v^C_1,v^Z_1,v_2\right)&\mapsto \left( v^C_1,(1+v^C_1)v_1^Z, \dot{\phi}_v\left( v_2\circ \phi_v \right)   \right).
			\end{aligned}
		\ee
	\end{defi}
	
	Observe that for every $v\in\mc{V}_2$, the time-reparametrization $\phi_v:I\to I$ is well-defined on the set $S_v:=\{ s\in I\mid s=\phi_v(t)\ \ \text{and}\ \  \dot\phi_v(t)\ \ \text{exists different from zero} \}$,
	which is of full measure by the Sard lemma for real-valued absolutely continuous functions (see, e.g. \cite[Theorem 16]{Varberg}).
	
	Moreover, it is not difficult to compute the inverse map
	\be\label{eq:coordinv}
		\begin{aligned}
			\rho^{-1}:\rho(\mc{V}_2)&\to \mc{V}_2\\
				(w_1^C,w_1^Z,w_2)&\mapsto\left( w_1^C, \frac{w_1^Z}{1+w_1^C},\frac{w_2\circ \phi_w^{-1}}{\dot{\phi}_w\circ \phi_w^{-1}}  \right),
		\end{aligned}
	\ee
	where this time we have
	\be
		\begin{aligned}
			\phi_w:I&\to I\\
				   \phi_w(t)&=\int_0^t1+\frac{w_!^Z(\tau)}{1+w_1^C}d\tau,
		\end{aligned}
	\ee
	and for every $w\in\rho(\mc{V}_2)$, the time-reparametrization $\phi_w:I\to I$ is well-defined on the full-measured set $S_w=\{ s\in I\mid s=\phi_w(t)\ \ \text{and}\ \  \dot\phi_w(t)\ \ \text{exists different from zero}\}$.

	Let us finally remark that, if $\mc{V}_2$ is chosen to be contained in the local chart $\mc{V}_1$ of Definition~\ref{defi:locchar}, then:
	\be\label{eq:changeofvv}
		\begin{aligned}
		F(\rho(v_1,v_2))&=x_0\circ \eexp\int_0^1 (1+v_1^Z(s))(1+v_1^C)X_1+\dot{\phi}_v(s)v_2(\phi_v(s))X_2ds\\&= x_0\circ\eexp\int_0^1 (1+v^C_1)X_1+v_2(t)X_2dt\\ &=y\circ \eexp\int_0^1 v^C_1X_1+v_2(t)g_tdt.
		\end{aligned}
	\ee	
	where the passage from the first to the second line follows by the change of variable $t=\phi_v(s)$, noticing that $1+v_1^Z(s)=\dot{\phi}_v(s)$.
	
	In particular, $F\circ\rho$ has no explicit dependence on the zero-mean part of the control $v_1^Z$.
	
	\subsection{Regularity properties of $\rho$} We prove in this section that $\rho$ is an homeomorphism onto its image.
	
	\begin{lemma}\label{lemma:cont}
		Let $w\in\rho(\mc{V}_2)$, and $(w_n)_{n\in \N}$ be a sequence converging to $w$ in $\BS$.
Then:
		\begin{itemize}
			\item [(i)] $\phi_{w_n}\to \phi_w$ uniformly on $I$;
			\item [(ii)] $\phi_{w_n}^{-1}\to \phi_w^{-1}$ pointwise on $S_w$.
		\end{itemize}
	\end{lemma}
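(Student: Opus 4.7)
The plan for (i) is to directly estimate $\|\phi_{w_n}-\phi_w\|_{L^\infty(I,\R)}$ via the $L^\infty$-norm of the difference of the integrands. Convergence $w_n\to w$ in $\BS$ forces convergence of the first coordinates $w_{n,1}\to w_1$ in $L^\infty(I,\R)$, and since the projections onto constants and onto zero-mean functions are continuous on $L^\infty(I,\R)$, we get $w_{n,1}^C\to w_1^C$ in $\R$ and $w_{n,1}^Z\to w_1^Z$ in $L^\infty(I,\R)$. Because $w\in \rho(\mc{V}_2)\subset\mc{V}_3$, the denominator $1+w_1^C$ is nonzero, so $1+w_{n,1}^C$ stays bounded away from zero for large $n$ and the ratio $\frac{w_{n,1}^Z}{1+w_{n,1}^C}$ converges to $\frac{w_1^Z}{1+w_1^C}$ in $L^\infty(I,\R)$. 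From the definition of $\phi_w$, this yields
\be
\sup_{t\in I}\left|\phi_{w_n}(t)-\phi_w(t)\right|\le \left\|\frac{w_{n,1}^Z}{1+w_{n,1}^C}-\frac{w_1^Z}{1+w_1^C}\right\|_{L^\infty(I,\R)}\xrightarrow[n\to\infty]{}0.
\ee

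For (ii), the plan is to bootstrap the uniform convergence from (i) using the strict monotonicity of $\phi_w$. Since $w\in\mc{V}_3$, for a.e.~$t\in I$ we have $\dot\phi_w(t)=1+\frac{w_1^Z(t)}{1+w_1^C}>\alpha>0$, so the absolutely continuous function $\phi_w$ is strictly increasing; combined with $\phi_w(0)=0$ and $\phi_w(1)=\int_0^11+\frac{w_1^Z(\tau)}{1+w_1^C}d\tau=1$ (using that $w_1^Z$ has zero mean), $\phi_w$ is a continuous bijection of $I$ onto itself, hence a homeomorphism. Now fix $s\in S_w$, put $t:=\phi_w^{-1}(s)$, $t_n:=\phi_{w_n}^{-1}(s)$, and argue by the subsequence principle: any subsequence $(t_{n_k})_k$ admits, by compactness of $I$, a further convergent subsequence $t_{n_{k_j}}\to t'\in I$. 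Uniform convergence from (i) then gives
\be
s=\phi_{w_{n_{k_j}}}(t_{n_{k_j}})\longrightarrow \phi_w(t'),
\ee
so $\phi_w(t')=s=\phi_w(t)$, and injectivity of $\phi_w$ forces $t'=t$. Hence $t_n\to t$, i.e.\ $\phi_{w_n}^{-1}(s)\to\phi_w^{-1}(s)$.

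The only genuinely delicate step is the invertibility of $\phi_w$ as a map $I\to I$, but this is handed to us by the conditions defining $\mc{V}_3$: the a.e.~strict positivity of $\dot\phi_w$ gives strict monotonicity, while the vanishing mean of the zero-mean component of the first coordinate gives $\phi_w(1)=1$. Everything else reduces to standard $L^\infty$-perturbation estimates and a subsequence compactness argument, with no need to exploit the $L^2$-component of $w$.
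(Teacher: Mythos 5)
Your proof is correct, and for part (ii) it takes a cleaner route than the paper. The paper argues by contradiction, using the infimum characterization $\phi_w^{-1}(s)=\inf\{t : \phi_w(t)=s\}$ and a case analysis on whether $\bar t:=\lim t_n$ is greater or less than $t$, concluding that $\phi_w$ would be constant on an interval and hence $s\notin S_w$. That argument is written so as to accommodate a $\phi_w$ that is merely monotone (and could be locally constant); but since the paper fixes $\alpha>0$ in the definition of $\mc V_2,\mc V_3$, the map $\phi_w$ is in fact bi-Lipschitz with $\dot\phi_w>\alpha$ a.e., and your direct exploitation of this --- strict monotonicity $\Rightarrow$ injectivity $\Rightarrow$ subsequence principle --- is simpler and yields the stronger conclusion that $\phi_{w_n}^{-1}\to\phi_w^{-1}$ pointwise on \emph{all} of $I$, not merely on the full-measure set $S_w$. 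Your argument also avoids a small awkwardness in the paper's write-up, which assumes without comment that $t_n$ converges; your passage to a convergent subsequence of an arbitrary subsequence handles this properly. One small point worth making explicit: you need $\phi_{w_n}$ and $\phi_{w_n}^{-1}$ to be well-defined for large $n$, which holds because $\mc V_3$ is manifestly open in $\BS$ and $w\in\rho(\mc V_2)\subset\mc V_3$, so $w_n\in\mc V_3$ eventually. For part (i) the paper simply declares the claim trivial; your explicit estimate is what one would expect and is sound, using only that $w_n\to w$ in $\BS$ controls the $L^\infty$ component and that $1+w_1^C\neq 0$.
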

	
	\begin{proof}
 The proof of (i) is trivial and left as an exercise.
		We pass to (ii). Observe that for every $s\in I$ we have $\phi_w^{-1}(s):=\inf\{  t\in I\mid \phi_w(t)=s$ and similarly for $\phi_{w_n}^{-1}(s)$, for $n\in \N$.

		Let $s\in S_w$, $t:=\phi_w^{-1}(s)$, $t_n:=\phi_{w_n}^{-1}(s)$ and assume that $\lim_{n\to\infty}t_n=\ov{t}> t$. By the triangular inequality, for every $\varepsilon>0$ there exists $n_\varepsilon\in\N$ such that for every $n\ge n_\varepsilon$ one has:
		\be\label{eq:abs}
			s=\phi_w(t)\le \phi_w(\ov{t})\le \phi_{w_n}(t_n)+3\varepsilon= s+3\varepsilon.
		\ee
		Notice that this implies $\phi_w(\ov{t})=s$, yielding that $s\in I\setminus S_w$ i.e. either $\phi_w(t)$ does not exists, or it exists and it is equal to zero, as $\phi_w$ would be constant on $[t,\ov{t}]$. Hence the absurd.

		By similar arguments we rule out also the case $\lim_{n\to\infty}t_n=\ov{t}<t$. Indeed we find that for every $\varepsilon>0$, there exists $n_\varepsilon\in\N$ such that for every $n\ge n_\varepsilon$
		\be
			s=\phi_w(t)\ge \phi_w(\ov{t})\geq \phi_{w_n}(t_n)-3\varepsilon=s-3\varepsilon,
		\ee
		and we conclude.
\end{proof}

	\begin{prop}\label{prop:contrep}
		The map $\rho:\mc{V}_2\to\rho(\mc{V}_2)$ is an homeomorphism.
	\end{prop}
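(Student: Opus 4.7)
Bijectivity of $\rho$ onto $\rho(\mc{V}_2)$ comes for free from the explicit inverse written down in \eqref{eq:coordinv}, so the entire task is to verify that $\rho$ and $\rho^{-1}$ are continuous for the topology of $\BS$. In both directions, the first two coordinates are polynomial expressions in the $L^\infty(I,\R)$ factor only (projection onto the constants, multiplication by a bounded scalar, subtraction), hence trivially continuous. All the content is concentrated in the third coordinate, a composition-plus-multiplication operator of the form
\be
v=(v_1^C,v_1^Z,v_2)\longmapsto \dot\phi_v\cdot(v_2\circ\phi_v),
\ee
so my plan is to analyze this single operator in each of the two directions.

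For continuity of $\rho$, assume $v^{(n)}\to v$ in $\BS$, and add and subtract to write
\be
\dot\phi_{v^{(n)}}(v_2^{(n)}\circ\phi_{v^{(n)}})-\dot\phi_v(v_2\circ\phi_v)=\bigl(\dot\phi_{v^{(n)}}-\dot\phi_v\bigr)\bigl(v_2^{(n)}\circ\phi_{v^{(n)}}\bigr)+\dot\phi_v\bigl(v_2^{(n)}\circ\phi_{v^{(n)}}-v_2\circ\phi_v\bigr).
\ee
Because $\dot\phi_{v^{(n)}}-\dot\phi_v=v_1^{Z,(n)}-v_1^Z\to 0$ in $L^\infty$, the first summand is handled as soon as I control $\|v_2^{(n)}\circ\phi_{v^{(n)}}\|_{L^2}$ uniformly in $n$; the defining condition $\dot\phi_{v^{(n)}}>\alpha$ of $\mc{V}_2$ plus the change of variables $t=\phi_{v^{(n)}}(s)$ furnishes precisely the needed estimate $\|v_2^{(n)}\circ\phi_{v^{(n)}}\|_{L^2}^2\le\alpha^{-1}\|v_2^{(n)}\|_{L^2}^2$. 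The second summand I split once more into $(v_2^{(n)}-v_2)\circ\phi_{v^{(n)}}$, which vanishes in $L^2$ by the same change of variables, and $v_2\circ\phi_{v^{(n)}}-v_2\circ\phi_v$, for which I approximate $v_2$ in $L^2$ by a continuous $\tilde v_2$: on the continuous part Lemma~\ref{lemma:cont}(i) and uniform continuity give uniform convergence $\tilde v_2\circ\phi_{v^{(n)}}\to \tilde v_2\circ\phi_v$, while on the error part the uniform bound $\alpha^{-1/2}\|v_2-\tilde v_2\|_{L^2}$ (again via change of variables applied to both $\phi_{v^{(n)}}$ and $\phi_v$) is independent of $n$. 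An $\epsilon/2$ argument concludes.

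For continuity of $\rho^{-1}$ I decompose identically the third coordinate, using the lower bound $1+w_1^Z/(1+w_1^C)>\alpha$ defining $\rho(\mc{V}_2)$ to recover the analogous change-of-variables estimate $\|w_2^{(n)}\circ\phi_{w^{(n)}}^{-1}\|_{L^2}^2\le \alpha^{-1}\|w_2^{(n)}\|_{L^2}^2$. The new ingredient is that Lemma~\ref{lemma:cont}(ii) supplies only pointwise a.e. convergence of $\phi_{w^{(n)}}^{-1}$ on the full-measure set $S_w$, rather than uniform convergence; I promote this to $L^2$ convergence in two steps, first for continuous approximants $\tilde w_2$ by dominated convergence (the integrands $|\tilde w_2\circ\phi_{w^{(n)}}^{-1}-\tilde w_2\circ\phi_w^{-1}|^2$ are pointwise null a.e. and uniformly bounded by $(2\|\tilde w_2\|_\infty)^2$), and then for general $w_2\in L^2$ by density and the uniform $L^2$ bound above.

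The single genuine obstacle is the $L^2$-continuity of the composition operator $v_2\mapsto v_2\circ\phi_v$ when $v_2$ is only measurable and square integrable, so that pointwise evaluation is not meaningful a priori; the key that unlocks it is the uniform lower bound $\alpha$ on $\dot\phi_v$ built into the definition of $\mc{V}_2$, which gives a change of variables with a bounded Jacobian and thereby turns pointwise information about $\phi_v$ into $L^2$ estimates on the composition, via density of $C(I,\R)$ in $L^2(I,\R)$.
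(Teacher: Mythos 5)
Your proof follows essentially the same strategy as the paper's: isolate the third coordinate, split the difference by the triangle inequality, use the change of variables $t=\phi(s)$ with the uniform Jacobian bound coming from the defining condition of $\mc{V}_2$, and combine Lemma~\ref{lemma:cont} with dominated convergence. You go slightly further than the printed proof in one useful respect: before invoking Lemma~\ref{lemma:cont}(ii) plus DCT you explicitly approximate the $L^2$ function by a continuous one; for a bare $L^2$ function, pointwise convergence of $\phi_{w_n}^{-1}$ does \emph{not} by itself give pointwise convergence of the composition, so the density step you insert is genuinely needed and the paper leaves it implicit.

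One small slip in the $\rho^{-1}$ direction: the estimate $\|w_2\circ\phi_w^{-1}\|_{L^2}^2\le\alpha^{-1}\|w_2\|_{L^2}^2$ is not what the change of variables gives. Writing $s=\phi_w(t)$ produces $\int_0^1|w_2(t)|^2\,\dot\phi_w(t)\,dt$, so the lower bound $\dot\phi_w>\alpha$ goes the wrong way; what you need is an \emph{upper} bound on $\dot\phi_w$, i.e.\ $\|w_2\circ\phi_w^{-1}\|_{L^2}^2\le\|\dot\phi_w\|_{L^\infty}\|w_2\|_{L^2}^2$. Since $w_n\to w$ in $\BS$ forces $\dot\phi_{w_n}\to\dot\phi_w$ in $L^\infty$, the constants $\|\dot\phi_{w_n}\|_{L^\infty}$ are uniformly bounded, and the argument goes through with this corrected constant; it is just not $\alpha^{-1}$. (The paper sidesteps this by writing the integrand with the quotient $z_n z^2$ in the denominator, so only a lower bound on $z_n,z$ is ever used.)
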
	
	
	\begin{proof}
	We only prove the continuity of $\rho^{-1}$, since the continuity of $\rho$ follows by similar arguments.
	
	Let $w\in \rho(\mc{V}_2)$ and let $(w_n)_{n\in\N}$ be a sequence converging to $w$ in $\BS$.
	Define $z_n:=\dot{\phi}_{w_n}=1+\frac{w_{n,1}^Z}{1+w_{n,1}^C}$ for every $n\in \N$ and, similarly, $z:=\dot{\phi}_{w}=1+\frac{w_{1}^Z}{1+w_{1}^C}$. Notice that, by construction, $z_n> \alpha$ and $z> \alpha$. Moreover, $(z_n)_{n\in\N}$ converges to $z$ in $L^\infty(I,\R)$.
	
	It is then sufficient to establish that:
		\be\label{eq:mainest}
			\lim_{n\to\infty}	\int_\Sigma\left|  \frac{w_{n,2}(\phi_{w_n}^{-1}(s))}{z_n(\phi_{w_n}^{-1}(s))}  -   \frac{w_2(\phi_w^{-1}(s))}{z(\phi_w^{-1}(s))} \right|^2ds=0,
		\ee
		where $\Sigma\subset I$ is the full-measured set:
		\be
			\Sigma:=\left\{ s\in I\mid s=\phi_w(t)\ \ \text{and}\ \  \exists\dot\phi_w(t)\neq 0 \right\}\cap \bigcap_{n\in\N}\left\{ s\in I\mid s=\phi_{w_n}(t)\ \ \text{and}\ \  \exists\dot\phi_{w_n}(t)\neq 0 \right\}.
		\ee
		
		By the triangular inequality, \eqref{eq:mainest} can be bounded in two steps. We begin with:
		\be\label{eq:xxx}
			\begin{aligned}
				\int_\Sigma\left|  \frac{w_{n,2}(\phi_{w_n}^{-1}(s))}{z_n(\phi_{w_n}^{-1}(s))}  -   \frac{w_2(\phi_w^{-1}(s))}{z(\phi_w^{-1}(s))} \right|^2ds\le 2\bigg(&\int_\Sigma\left|\frac{w_{n,2}(\phi_{w_n}^{-1}(s))}{z_n(\phi_{w_n}^{-1}(s))}-\frac{w_2(\phi_{w_n}^{-1}(s))}{z(\phi_{w_n}^{-1}(s))}\right|^2 ds+\\
				&\int_{\Sigma}\left| \frac{w_2(\phi_{w_n}^{-1}(s))}{z(\phi_{w_n}^{-1}(s))}-\frac{w_2(\phi_w^{-1}(s))}{z(\phi_w^{-1}(s))} \right|^2ds\bigg).
			\end{aligned}
		\ee
		
		By the change of variables $t:=\phi_{w_n}^{-1}(s)$, we bound the first term by
		\be
			\begin{aligned}
			&\int_{\phi_{w_n}^{-1}(\Sigma)}\frac{| z(t)w_{n,2}(t)- z_n(t)w_2(t) |^2}{z_n(t)z(t)^2}dt\le \frac{2}{\alpha^3}	\int_0^1 z(t)^2|w_{n,2}(t)-w_2(t)|^2+|z(t)-z_n(t)|^2|w_2(t)^2dt,
			\end{aligned}
		\ee
		and we conclude that it converges to zero as $n\to \infty$. The convergence to zero of the second term in \eqref{eq:xxx}
		is instead a consequence of Lemma~\ref{lemma:cont} and the Lebesgue's dominated convergence theorem. The result follows.
	\end{proof}

	\section{Normal forms around rank-two-nice singular curves}\label{se:norfor}
	
	We study in this section the endpoint map $F$ on the set $\rho(\mc{V}_2)\subset \BS$, which is an open neighborhood of the origin by Proposition~\ref{prop:contrep}.
	
	By \eqref{eq:changeofvv} we see that, on $\rho(\mc{V}_2)$, $F$ depends only on $\int_0^1v_1(t)dt\in \R$ and $v_2$.
	To find a normal form it is then sufficient to restrict $F$ onto $\R\oplus H^1(I,\R)\subset L^\infty(I,\R)\oplus H^1(I,\R)$, possibly after an integration by parts as in \eqref{eq:changeofcoord}.
	
	\subsection{A generalized Morse Lemma}
	Let $\mathtt{F}:=F\big|_{\R\oplus H_1(I,\R)}$. Then we have:
	\be\label{eq:endpdef}
		\begin{aligned}
		\mathtt{F}(c_1,w_2):&=x_0\circ \eexp\int_0^1(1+c_1)X_1+\dot{w}_2(t)X_2dt\\&=y\circ\eexp \int_0^1c_1X_1+\dot{w}_2(t)g_tdt
		\end{aligned}
	\ee
	on $\R\oplus H^1(I,\R)$, and
	\be\label{eq:endpdefpr}
		\mathring{\mathtt{F}}(c_1,c,w):=x_0\circ \eexp\int_0^1(1+c_1)e_*^{-w(t)X_2}X_1dt\circ e^{cX_2}
	\ee
	on the completion space $\R^2\oplus L^2(I,\R)$ (compare with \eqref{eq:changeofcoord} and \eqref{eq:Gnewcoord}).
	
	Notice that $\lambda\He_0(\mathring{\mathtt{F}})$ is non-degenerate: in fact, since $\gamma$ is rank-two-nice, the intersection of $\ker(d_0\mathring{F})$ with $\R^2\oplus L^2(I,\R)$ is nothing but the subspace $\mathring{P}\oplus \mathring{N}$ introduced in Section~\ref{sec:an_prop}. This allows to establish a ``generalized Morse Lemma'' much in the spirit of \cite[Lemma 1.2]{SaryStab}.
	
	We fix coordinates $z:=(c_1,c,w_2)\in \R^2\oplus L^2(I,\R)$, and we write $\mathring{\mathtt{F}}(z)=\mathring{\mathtt{F}}(0)+d_0\mathring{\mathtt{F}}(z)+d_0^2\mathring{\mathtt{F}}(z)+\mathtt{R}(z)$, where $\mathtt{R}$ denotes a remainder term whose first and second derivatives at zero vanish. 	
	\begin{prop}\label{prop:GML}
		There exist a neighborhood $\mathring{\mc{W}}\subset \R^2\oplus L^2(I,\R)$
		and an origin-preserving diffeomorphism $\mathring{\sigma}:\mathring{\mc{W}}\to \mathring{\mc{W}}$,
		and a diffeomorphism $\mathring{\nu}:\mc{O}_y\to \mc{O}_0$ of neighborhoods $\mc{O}_y\subset M$ and $\mc{O}_0\subset \R\oplus \IM (d_0\mathtt{F})$, respectively of $y$ and $0$, such that for every $z\in \mathring{\mc{W}}$ there holds the identity:
		\be
			\left(\mathring{\nu}\circ \mathring{\mathtt{F}} \circ \mathring{\sigma}\right)(z)=
			\left(\lambda\He_0(\mathring{\mathtt{F}})(z'),d_0\mathring{\mathtt{F}}(z'')\right),
		\ee
		where $z=(z'z'')$ is a coordinate system subordinated to the splitting $\ker(d_0\mathring{\mathtt{F}})\oplus E$.
\end{prop}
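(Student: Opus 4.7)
My strategy has three stages: rectify the submersion direction of $\mathring{\mathtt{F}}$ along $E$ by the implicit function theorem; put the residual scalar ``abnormal'' component into quadratic normal form on $\ker(d_0\mathring{\mathtt{F}})$ via an infinite-dimensional parametric Morse lemma; and finally absorb a residual parameter-dependent term into the target diffeomorphism $\mathring{\nu}$. Concretely, since $\gamma$ is corank one I choose local coordinates $\varphi$ on a neighborhood of $y$ identifying $M$ with $\R\oplus\IM(d_0\mathtt{F})$ in such a way that the first factor is dual to the generator $\lambda$ of $\IM(d_0\mathtt{F})^\perp$. Writing $\varphi\circ\mathring{\mathtt{F}}=(\Phi_1,\Phi_2)$, we obtain $d_0\Phi_1=0$ while $d_0\Phi_2|_E$ is a linear isomorphism onto $\IM(d_0\mathtt{F})$. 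Applying the implicit function theorem to $(z',z'')\mapsto(z',\Phi_2(z',z''))$ produces a local origin-preserving diffeomorphism $\sigma_1$ such that $\Phi_2\circ\sigma_1(z)=d_0\mathring{\mathtt{F}}(z'')$; setting $G:=\Phi_1\circ\sigma_1$ we get $G(0)=0$, $d_0G=0$, and the $z'$-Hessian of $G$ at the origin coincides with $\lambda\He_0\mathring{\mathtt{F}}$.

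The central step is the parametric Morse normal form for $G$. Here Proposition~\ref{prop:invertible} is the pivotal input: it provides that $\lambda\He_0\mathring{\mathtt{F}}$ is represented on the Hilbert space $\mathring{P}\oplus\mathring{N}$ by a toplinear isomorphism $L$. First I apply the implicit function theorem in the $z'$-variable to the equation $\partial_{z'}G(z',z'')=0$, obtaining a smooth $z'_*(z'')$ with $z'_*(0)=0$; after the translation $(z',z'')\mapsto(z'+z'_*(z''),z'')$ the origin is a critical point for every value of the parameter, so that one has an expansion
\begin{equation*}
G(z',z'') = \tfrac{1}{2}\langle A(z'')z',z'\rangle + h(z''),
\end{equation*}
with $A(z'')$ a smooth family of symmetric operators and $A(0)=L$. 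Then a Moser homotopy between $A(z'')$ and $L$, solved by inverting a time-parametrized family of operators that is a perturbation of $L$ and hence remains bounded-invertible in a neighborhood of the origin, yields a second origin-preserving diffeomorphism $\sigma_2$ of the form $(z',z'')\mapsto(\Psi(z',z''),z'')$ satisfying $G\circ\sigma_2(z',z'')=\lambda\He_0\mathring{\mathtt{F}}(z')+h(z'')$, with $h(0)=0$ and $d_0h=0$.

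Putting everything together,
\begin{equation*}
(\varphi\circ\mathring{\mathtt{F}}\circ\sigma_1\circ\sigma_2)(z',z'') = \bigl(\lambda\He_0\mathring{\mathtt{F}}(z') + h(z''),\; d_0\mathring{\mathtt{F}}(z'')\bigr),
\end{equation*}
so it suffices to set $\tilde h:=h\circ(d_0\mathring{\mathtt{F}}|_E)^{-1}$, which is smooth on a neighborhood of the origin in $\IM(d_0\mathtt{F})$, and define
\begin{equation*}
\mathring{\nu}(p) := \bigl(\Phi_1(p) - \tilde h(\Phi_2(p)),\; \Phi_2(p)\bigr).
\end{equation*}
This is an origin-preserving diffeomorphism of a neighborhood of $y$ onto a neighborhood of $0$ in $\R\oplus\IM(d_0\mathtt{F})$, and together with $\mathring{\sigma}:=\sigma_1\circ\sigma_2$ it realizes the claimed identity. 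The main obstacle is the infinite-dimensional Morse step: in a Hilbert setting mere non-degeneracy of the quadratic form is not enough to carry out the Moser argument, because the associated cohomological equation at each time must be solved by inverting an operator acting on the whole of $\mathring{P}\oplus\mathring{N}$. It is precisely the bounded invertibility granted by Proposition~\ref{prop:invertible}---which in turn rests on the rank-two-nice hypothesis via Propositions~\ref{prop:ConjProp} and~\ref{prop:positive}---that makes the Moser trick effective in this context.
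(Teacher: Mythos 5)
Your proposal is correct, but it organizes the argument quite differently from the paper, even though it leans on the same two pillars. The paper runs a single Moser (``Tresse'') homotopy directly between the full map $\mathring{\mathtt{F}}$ and its second-order Taylor polynomial, seeking simultaneously a time-dependent family of source diffeomorphisms $P_t=\eexp\int_0^t X_\tau d\tau$ and target diffeomorphisms $Q_t=\eexpi\int_0^t Y_\tau d\tau$. This produces a coupled pair of cohomological equations: the $E$-component $\mathtt{R}^E$ is killed by $X_t^E$ alone (using the right inverse of $d_0\mathring{\mathtt{F}}$), and then the residual scalar equation is split via the Hadamard lemma into a piece absorbed by $Y_t^\lambda$ (the target) and a piece absorbed by $X_t^{\ker}$, with the crucial invertibility of $L$ from Proposition~\ref{prop:invertible} needed to solve for $X_t^{\ker}$. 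You factor the same content into three sequential steps: an IFT rectification of the submersion direction (playing the role of $X_t^E$), a parametric Morse lemma on the scalar component $G$ over $\ker(d_0\mathring{\mathtt{F}})$ (playing the role of $X_t^{\ker}$, and again pivoting on the invertibility of $L$), and the absorption of the residual $h(z'')$ into $\mathring{\nu}$ (playing the role of $Y_t^\lambda$). You correctly identify that the crux is the bounded invertibility of $L$ on $\mathring{P}\oplus\mathring{N}$, which is exactly where the rank-two-nice hypothesis enters, both in your Moser step and in the paper's. Your modular version is cleaner to state but produces only the time-one diffeomorphism, whereas the paper's all-at-once homotopy yields the entire commuting family $(P_t,Q_t)_{t\in I}$ depicted in its Figure~\ref{fig:diagram}; both suffice for the proposition. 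One small notational slip: you write $\mathring{\nu}(p)=(\Phi_1(p)-\tilde h(\Phi_2(p)),\Phi_2(p))$ for $p\in M$, but $\Phi_1,\Phi_2$ are the components of $\varphi\circ\mathring{\mathtt{F}}$, defined on control space; what you mean is $\mathring{\nu}=(\varphi_1-\tilde h\circ\varphi_2,\,\varphi_2)$ in the chosen coordinates $\varphi=(\varphi_1,\varphi_2)$ on $M$.
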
	
	
	\begin{proof}
		We
		prove in fact a slightly stronger statement: there exist neighborhoods $\mathring{\mc{W}}\subset \R^2\oplus L^2(I,\R)$ and $\mc{O}_y\subset M$, respectively of the origin and of $y$, such that for every $t\in I$ there exist an origin-preserving diffeomorphism $P_t:\mathring{\mc{W}}\to \mathring{\mc{W}}$ and a diffeomorphism $Q_t:\mc{O}_y\to\mc{O}_y$ that preserves $y$, for which the diagram in Figure~\ref{fig:diagram}
	\begin{figure}[tbp]
			\includegraphics[scale=.8]{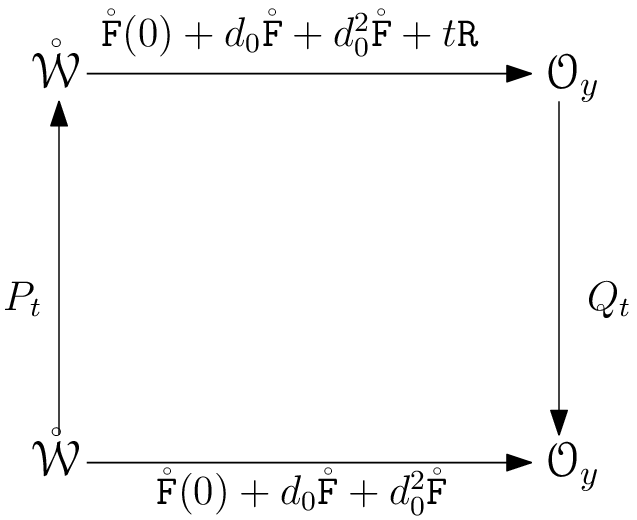}
			\caption{}
			\label{fig:diagram}
		\end{figure}
		commutes.
		
		It is then sufficient to set $\mathring{\sigma}:=P_1$ and $\mathring{\nu}$ to be the composition of $Q_1$ with a diffeomorphism from $\mc{O}_y$ to $\mc{O}_0$ to prove the proposition.
		
		More specifically, as for the classical Moser's trick \cite{Moser}, we look for families $(P_t)_{t\in I}$, $(Q_t)_{t\in I}$ of diffeomorphisms, of the form  $P_t=\eexp\int_0^t X_\tau d\tau$ and $Q_t=\eexpi\int_0^t Y_\tau d\tau$, for suitable locally Lipschitz time-dependent vector fields $X_\tau$ and $Y_\tau$ on $\mathring{\mc{W}}$ and $\mc{O}_y$. For the definition of the right and left chronological exponentials we refer to \cite[Chapter 2]{AgraBook}. For us, it is only important to recall that $\frac{d}{dt}P_t=P_t\circ X_t$ and $\frac{d}{dt}Q_t=Y_t\circ Q_t$.

		Without loss of generality, we assume $\mathring{\mathtt{F}}(0)=0$, and we fix local coordinates on a neighborhood $\mc{O}_y\subset M$ of $y$, subordinated to the splitting $T_{0}M=\mathrm{coker}(d_0\mathring{\mathtt{F}})\oplus \IM(d_0\mathring{\mathtt{F}})$.  We also write mappings $f:\R^2\oplus L^2(I,\R)\to T_0M$ as sums of the form $f=f^\lambda+f^E$, where $f^\lambda=\langle \lambda,f\rangle$ denotes the projection of $f$ along the abnormal direction.

	The commutativity condition expressed by the diagram reads:
	\be
		P_t\circ \big( d_0\mathring{\mathtt{F}}+d_0^2\mathring{\mathtt{F}}+t\mathtt{R} \big)\circ Q_t=d_0\mathring{\mathtt{F}}+d_0^2\mathring{\mathtt{F}}.
	\ee
	Notice that for $t=0$ the identity holds. Differentiating this equation we obtain:
		\be\label{eq:first}
			P_t\circ \left( X_t\cdot\left( d_0\mathring{\mathtt{F}}+d_0^2\mathring{\mathtt{F}}+t\mathtt{R} \right)+\mathtt{R}+\left(d_0\mathring{\mathtt{F}}+d_0^2\mathring{\mathtt{F}}+t\mathtt{R}\right)\circ Y_t
			\right)\circ Q_t=0.
		\ee
		
		We look for solutions to \eqref{eq:first} of the form $X_t=X_t^{\ker}+X_t^E$ and $Y_t=Y_t^{\lambda}$, where $X_t^{\ker}:=\pi_{\ker(d_0\mathring{\mathtt{F}})}X_t$.
		
		 Since $P_t$ and $Q_t$ are diffeomorphisms, solving \eqref{eq:first} is equivalent to solving the following system of equations:
		\be\label{eq:second}
			\left\{\begin{aligned}
				&d_0\mathring{\mathtt{F}} X_t(z)+2d_0^2\mathring{\mathtt{F}}^E\left(z,X_t(z)\right)+ td_z\mathtt{R}^E X_t(z)=-\mathtt{R}^E(z),\\
			&2d_0^2\mathring{\mathtt{F}}^{\lambda}\left(z,X_t(z)\right)+td_z\mathtt{R}^{\lambda} X_t(z)+Y_t^{\lambda}\left(d_0\mathring{\mathtt{F}}(z)+d_0\mathring{\mathtt{F}}^2(z)+t\mathtt{R}(z)\right) =-\mathtt{R}^{\lambda}(z).
			\end{aligned}\right.
		\ee

Let $(d_0\mathring{\mathtt{F}})^{-1}:\IM (d_0\mathring{\mathtt{F}})\to E$ denote the right pseudo-inverse to $d_0\mathring{\mathtt{F}}$.
We solve the first equation of \eqref{eq:second} with respect to $X_t^E$. Given $z\in \R^2\oplus L^2(I,\R)$, let
\be
	\begin{aligned}
		J_t^E(z): E &\to E,\\
		z'&\mapsto z'+2(d_0\mathring{\mathtt{F}})^{-1}d_0^2\mathring{\mathtt{F}}^E(z,z')+t(d_0\mathring{\mathtt{F}})^{-1}d_z\mathtt{R}^E(z').
	\end{aligned}
\ee
Observe that $J_t^E(0)=\mathrm{Id}_E$ for every $t\in I$. Then there exists a neighborhood of the origin $\mathring{\mc{W}}_1\subset \R^2\oplus L^2(I,\R)$ such that $J_t^E(z)$ is invertible for every $z\in\mathring{\mc{W}}_1$, and we have
\be\label{eq:XE}
	X_t^E(z)=-J_t^E(z)^{-1}(d_0\mathring{\mathtt{F}})^{-1}\left(  \mathtt{R}^E(z)+2d_0^2\mathring{\mathtt{F}}^E\left(z,X_t^{\ker}(z)\right)+td_z\mathtt{R}^E X_t^{\ker}(z) \right).
\ee

We substitute $X_t^{E}$ in the second equation of \eqref{eq:second}. To find $X_t^{\ker}$ we solve to the scalar equation:
		\be\label{eq:third}
			\left( A_t(z),X_t^{\ker}(z) \right)_{\R^2\oplus L^2(I,\R)}+Y_t^\lambda\left(d_0\mathring{\mathtt{F}}(z)+d^2_0\mathring{\mathtt{F}}(z)+t\mathtt{R}(z)\right)=-S^\lambda_t(z),
		\ee
		where $(\cdot,\cdot)_{\R^2\oplus L^2(I,\R)}$ denotes the Hilbert product on $\R^2\oplus L^2(I,\R)$, and for every
$t\in I$ we defined the maps
		\be
			\begin{aligned}
			S_t^\lambda:\mathring{\mc{W}}_1&\to \R,\\
			z&\mapsto\lambda \mathtt{R}(z)+2\lambda d_0^2\mathring{\mathtt{F}}\left(z, J_t^E(z)^{-1}(d_0\mathring{\mathtt{F}})^{-1}\mathtt{R}^E(z)  \right)+t\lambda d_z\mathtt{R}\left( J_t^E(z)^{-1}(d_0\mathring{\mathtt{F}})^{-1}\mathtt{R}^E(z) \right),
			\end{aligned}
		\ee
		and $A_t:\mathring{\mc{W}}_1\to \R^2\oplus L^2(I,\R)$ by requiring that:
		\be
			\begin{aligned}
			\left( A_t(z), z' \right)_{\R^2\oplus L^2(I,\R)}&:=2\lambda d_0^2\mathring{\mathtt{F}}(z,z')-2\lambda d_0^2\mathring{\mathtt{F}}\left( z, J_t^E(z)^{-1}(d_0\mathring{\mathtt{F}})^{-1}\left( 2d_0^2\mathring{\mathtt{F}}^E\left(z,z'\right)+td_z\mathtt{R}^E (z') \right) \right)\\ &+t\lambda d_z\mathtt{R}(z')-t\lambda d_z\mathtt{R}\left( J_t^E(z)^{-1}(d_0\mathring{\mathtt{F}})^{-1}\left( 2d_0^2\mathring{\mathtt{F}}^E\left(z,z'\right)+td_z\mathtt{R}^E (z') \right)  \right)
			\end{aligned}
	\ee
	for every $z'\in\R^2\oplus L^2(I,\R)$.
	
	Notice that, for every $t\in I$, the first and the second derivatives of $S_t^\lambda$ at zero vanish. Moreover, $d_0A_t$ is a linear map from $\R^2\oplus L^2(I,\R)$ with the property that
	\be
		\left(d_0A_t(z),z'\right)_{\R^2\oplus L^2(I,\R)}=2\lambda d_0^2\mathring{\mathtt{F}}(z,z').
	\ee
	for every $z,z'\in \R^2\oplus L^2(I,\R)$.
	
	This implies that the linear operator
	\be
		\proj_{\ker(d_0\mathring{\mathtt{F}})}\circ d_0A_t\big|_{\ker(d_0\mathring{\mathtt{F}})}:\ker(d_0\mathring{\mathtt{F}})\to \ker(d_0\mathring{\mathtt{F}})
	\ee
	is in fact the invertible linear operator $L$ associated with the restriction to $\mathring{P}\oplus \mathring{N}$ of $\lambda\He_0\mathring{F}$ of Proposition~\ref{prop:invertible}.
	
	Consider, for $t\in I$,
 	\be\label{eq:Phi}
		\begin{aligned}
			\mathfrak{F}_t:\mathring{\mc{W}}_1&\to \R^2\oplus L^2(I,\R),\\
			z&\mapsto \left(  \pi_{\ker(d_0\mathring{\mathtt{F}})}\circ A_t(z), d_0\mathring{\mathtt{F}}^*\left(d_0\mathring{\mathtt{F}}+d_0^2\mathring{\mathtt{F}}(z)+t\mathtt{R}(z)\right) \right)
		\end{aligned}
	\ee
	where $ d_0\mathring{\mathtt{F}}^*:\IM(d_0\mathring{\mathtt{F}})\to E$ denotes an adjoint operator to $d_0\mathring{\mathtt{F}}$. An easy computation shows that $d_0\mathfrak{F}_t$ has the upper triangular form
		\be\label{eq:jacphi}
			d_0\mathfrak{F}_t=\begin{pmatrix}  L & * \\ 0 & d_0\mathring{\mathtt{F}}^* d_0\mathring{\mathtt{F}} . \end{pmatrix}
		\ee
		If we introduce coordinates $(v,w)$ on $\R^2\oplus L^2(I,\R)$, adapted to the splitting $\ker(d_0\mathring{\mathtt{F}})\oplus E$, 
		this means that $\mathfrak{F}_t(z)=\left( v(z,t), w(z,t) \right)$ defines a local diffeomorphism on some neighborhood $\mathring{\mc{W}}\subset \mathring{\mc{W}}_1$ for every $t\in I$.

 By the Hadamard Lemma there exists a smooth function $\mathfrak{S}_t:\mathring{\mc{W}}\to \ker(d_0\mathring{\mathtt{F}})$ such that
		\be\begin{aligned}\label{eq:fourth}
			S_t^\lambda(z)&=\left(S_t^\lambda\circ \mathfrak{\mathtt{F}}_t^{-1}\right)\left( v(z,t), w(z,t) \right)\\
			                        &=\left( S_t^\lambda\circ \mathfrak{\mathtt{F}}_t^{-1} \right)\left(0,w(z,t)\right)+\left( v(z,t),\mathfrak{S}_t(z) \right)_{\R^2\oplus L^2(I,\R)}.
		\end{aligned}\ee
		
		To conclude the proof it suffices to compare \eqref{eq:third}, \eqref{eq:Phi} and \eqref{eq:fourth}, and to define:
		\be
			X_t^{\ker}:=\mathfrak{S}_t,\ \ Y_t^\lambda:=S_t^\lambda\circ \mathfrak{F}_t^{-1}\circ d_0\mathring{\mathtt{F}}^*
		\ee
		and $X_t^E$ by plugging $X_t^{\ker}$ back in \eqref{eq:XE}. It is not difficult to see that $X_t^{\ker}$ is Lipschitz with vanishing first derivative at the origin, and that also $Y_t^\lambda$ and $X_t^E$ are Lipschitz with respect to their arguments, with vanishing first and second order derivatives at the origin.
	\end{proof}
	
	\subsection{Proof of the main Theorem}\label{sec:proof}
	
	\begin{proof}[Proof of Theorem~\ref{thm:main}]
		We divide the proof into several steps. Let us define
		\be
			\mc{W}:=\mathring{\mc{W}}\cap \left( \R\oplus H^1(I,\R)  \right).
		\ee
		We begin by claiming that the diffeomorphism $\mathring{\sigma}$, provided by Proposition~\ref{prop:GML}, restricts to a diffeomorphism $\sigma:=\mathring{\sigma}\big|_{\mc{W}}$ of $\mc{W}$.
		
		Indeed, it is shown in \cite[Theorem 3.2]{SaryStab} that $\mathring{\sigma}$ has the form of a system of nonlinear Urysohn integral equations of the second kind with small kernels, i.e.
		\be\label{eq:Urysohn}
			\mathring{\sigma}(v)(t)=v(t)-\int_0^1 K(v,\tau,t)d\tau,
		\ee
		where $K$ is differentiable with respect to the $t$-variable. It follows that $\mathring{\sigma}(v)\in  \R\oplus H^1(I,\R)$ if and only if $v\in  \R\oplus H^1(I,\R)$. The claim follows.
		
		Next, let $L^\infty_Z(I,\R)$ be defined as in \eqref{eq:Banspacezero}, and consider the projection
		\be
			\pi_{\R\oplus H^1(I,\R)}:\R\oplus L^\infty_Z(I,\R)\oplus H^1(I,\R) \to \R\oplus H^1(I,\R),
		\ee
		where the decompositions are orthogonal with respect to the $L^2(I,\R)$-product.
		
		If we now define
		\be\label{eq:mcV}
			\begin{aligned}
			\mc{W}_{\rho}&:=\pi_{\R\oplus H^1(I,\R)}^{-1}(\mc{W})\cap \rho(\mc{V}_2),\\
			\mc{V}&:=\rho^{-1}(\mc{W}_{\rho}),
			\end{aligned}
		\ee
		we conclude by Proposition~\ref{prop:contrep} that $\mc{V}$ is an open neighborhood of the origin in $L^\infty(I,\R)\oplus H^1(I,\R)$ contained in $\mc{V}_2$ (and an $(\infty,2)$-neighborhood, in fact).
		
		Lastly, let us observe that
		\be\label{eq:varphirep}
			\varphi:=\left( \sigma\big|_{\R\oplus \{0\}},\mathrm{Id}_{L^\infty_Z(I,\R)},  \sigma\big|_{\{0\}\oplus H^1(I,
			R)}  \right)
		\ee
		provides the desired change of coordinates on $\rho(\mc{V})=\mc{W}_{\rho}$. Then the theorem follows with $\mc{V}$ as in \eqref{eq:mcV}, $\mu:=\varphi\circ \rho$, $\mc{V}':=\mu(\mc{V})$ and $\psi:=\mathring{\nu}$, $\mc{O}_y$, $\mc{O}_0$ as in Proposition~\ref{prop:GML}.
	\end{proof}

	\subsection{Isolation of rank-two-nice singular curves}\label{sec:isolation} We prove in this section some isolation properties of rank-two-nice singular curves in $\Omega(y)$ among the class of extremal curves, i.e. among critical points of the extended endpoint map $\mc{F}$ (see \eqref{eq:PMP}).
	
	Let $\gamma$ be a reference rank-two-nice singular curve, and fix a local chart around $\gamma$.

	\begin{proof}[Proof of Theorem~\ref{thm:isolation}]
	Theorem~\ref{thm:main} yields local coordinate systems on an $(\infty,2)$-neighborhood of the origin $\mc{V}\subset \ker(d_0F)\oplus \IM(d_0F)$ and on a neighborhood $\mc{O}_0\subset M$ of the origin such that
		\be\label{eq:Fcoord}
			F(v,w)=\begin{pmatrix} \lambda\He_0F(v) \\ w \end{pmatrix}
		\ee
		for every $(v,w)\in\mc{V}$. In particular, if $(v,w)\in\Omega(y)$ then it must be the case that $w=0$. This proves the inclusion
		\be
			\Omega(y)\cap\mc{V}\subset\left\{ (v,0)\mid v\in \ker(d_0F)  \right\}.
		\ee
		Let us write every $v\in \ker(d_0F)$ as a pair $v=(v_1,v_2)$, in subordination to the splitting $L^\infty(I,\R)\oplus L^2(I,\R)$.
		
		Now we claim the following: there exists a neighborhood of the origin $\mc{U}\subset \mc{V}$ such that, if $(v,w)\in \mc{U}$ is associated with an extremal curve, then $v_2\equiv 0$ a.e. on $I$.
		
		Indeed, it is sufficient to consider any neighborhood $\mc{U}\subset \mc{V}$ with the following property: for every element $(v,w)\in \mc{U}$, $d_{(v,w)}J:L^2(I,\R^2)\to \R$ is not the zero map. Notice that this is certainly possible since $\nabla_0J=(1,0)$.
		
		Fix an extremal control $(v,w)\in \mc{U}$, and let $a\in C^\infty(M)$ be such that $a(0)=0$ and $d_0a=\lambda$. We consider the composition $a\circ F: \mc{V}\to \R$. In coordinates, we have $(a\circ F)(v,w)=\lambda\He_0F(v)$, whence
		\be
			d_{(v,w)}(a\circ F)(x,y)=2\lambda \He_0F(v,x)
		\ee
		for every $(x,y)\in
		\BS$.
		
		Assuming by contradiction $v_2$ different from zero, we conclude that there exists $x\in \ker(d_0F)$ such that $\lambda\He_0F(v,x)\ne 0$, in fact $\lambda\He_0F(v,\cdot)$ is not the zero operator since $\gamma$ is rank-two-nice. However, this implies that $d_{(v,w)}\mc{F}$ has maximal rank, and a covector $\xi$ satisfying \eqref{eq:PMP} cannot exist, providing the absurd.
		
		So far, we have shown the following: there exists a neighborhood of the origin $\mc{U}\subset \BS$, such that if $u=(v,w)$ is a control associated with an extremal curve in $\Omega(y)$, then $w\equiv v_2\equiv0$ a.e. on $I$, implying that $u_2\equiv 0$ a.e. on $I$ and, in turn, $u=(u_1,0)$.  On the other hand, from the formula
		\be
			y=F(u_1,0)=x_0\circ e^{(1+\int_0^1 u_1(t)dt)X_1}=y\circ e^{\int_0^1 u_1(t)dtX_1}
		\ee
		we see that $(u_1,0)\in \Omega(y)$ if and only if $u_1$ is of zero mean.
		
		Finally, if we choose $\mc{U}$ so that $1+\int_0^t u_1(t)dt$ is of constant sign for every $u\in \mc{U}$, we conclude that the curve
		\be
			t\mapsto F_t(u_1,0)= x_0\circ e^{(1+\int_0^t u_1(\tau)d\tau)X_1}
		\ee
		is just a reparametrization of $\gamma$, and the theorem follows.
	\end{proof}
	
	\begin{cor}\label{cor:isol}
		There exists a weak neighborhood of the origin $\mc{V}_4\subset L^2(I,\R^2)$, such that the only singular controls contained in $\mc{V}_4$ are of the form $v=(v_1,0)$, with $v_1$ of zero mean.
	\end{cor}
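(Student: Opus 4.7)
The strategy is to argue by contradiction: assume the conclusion fails for every weak $L^2$-neighborhood, so there exists a sequence $(v_n)_{n\in\N}\subset L^2(I,\R^2)$ of singular controls with $v_n\rightharpoonup 0$ weakly, none of which has the desired form. By Proposition~\ref{prop:contendp}, the associated curves $\xi_n:=A(v_n)$ converge uniformly to $\gamma$, $F(v_n)\to y$, and $d_{v_n}F\to d_0F$ in operator norm. Since $v_n$ is singular and $d_0F$ has corank one, upper semicontinuity forces $d_{v_n}F$ to have corank exactly one for $n$ large, and the unit annihilators $\lambda_n\in T^*_{\xi_n(1)}M$ of $\IM(d_{v_n}F)$ converge (up to sign) to the abnormal covector $\lambda$ of $\gamma$.

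Next I would lift each $\xi_n$ to an abnormal biextremal $\eta_n:I\to \Delta^\perp$ with $\eta_n(1)=\lambda_n$ via Proposition~\ref{prop:SingAbn}. Continuous dependence of the abnormal Hamiltonian system on its data gives $\eta_n\to\eta$ uniformly on $I$, where $\eta$ is the regular abnormal lift of $\gamma$. Since $\gamma$ is regular, $\eta(t)\in(\Delta^2)^\perp\setminus(\Delta^3)^\perp$ for every $t\in I$, and by openness of this condition the same holds for $\eta_n$ when $n$ is large.

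The key rigidity step now kicks in: on the open submanifold $(\Delta^2)^\perp\setminus(\Delta^3)^\perp$ of $T^*M$, the restriction $\overline{\omega}$ of the canonical symplectic form admits a smooth, nowhere-vanishing one-dimensional characteristic line field, whose integral curves are unique up to reparametrization; regular abnormal biextremals are precisely these integral curves. Hence for $n$ large $\eta_n$ is literally a reparametrization of (a sub-arc of) $\eta$, and so $\xi_n$ is a reparametrization of (a sub-arc of) $\gamma$. In the adapted frame this forces $v_{n,2}\equiv 0$ and $v_{n,1}=\dot\phi_n-1$ for a monotone $\phi_n:I\to I$ with $\phi_n(0)=0$ and $\gamma(\phi_n(1))=F(v_n)$. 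A sufficiently fine choice of $\mc{V}_4$ (using the local embedded structure of $\gamma(I)$ near $y$ to constrain the endpoint) then enforces $\phi_n(1)=1$, so that $\int_0^1 v_{n,1}\,dt=0$, contradicting the standing hypothesis on $v_n$.

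The delicate step is the rigidity one: upgrading uniform proximity of $\eta_n$ to $\eta$ into the statement that $\eta_n$ is \emph{literally} a reparametrization of $\eta$. This depends essentially on the regularity of $\gamma$, which is what guarantees that the characteristic line field is smooth and nonzero along the curve $\eta(I)$, so that the classical uniqueness theorem for integral curves of a line field applies. A secondary technical point is ensuring that the weak neighborhood $\mc{V}_4$ can be arranged so that any singular control in it has its endpoint exactly at $y$; this is the place where the plan is most schematic and would require a careful comparison with the topology of $\gamma(I)$ near $y$.
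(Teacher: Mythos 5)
Your first two steps (weak convergence $\Rightarrow$ uniform convergence of the controlled flows and their differentials, hence corank one and openness of the regularity condition for $n$ large, hence uniform convergence of normalized abnormal lifts $\eta_n\to\eta$) line up with the paper's argument. The deviation, and the gap, is in the step you yourself flag as delicate.

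You conclude that ``for $n$ large $\eta_n$ is literally a reparametrization of (a sub-arc of) $\eta$'' by appealing to uniqueness of integral curves of the characteristic line field on $(\Delta^2)^\perp\setminus(\Delta^3)^\perp$. But uniqueness only gives you that two integral curves passing through the \emph{same} point coincide. What you have is $\eta_n(0)\to\eta(0)$ in $\Delta^\perp_{x_0}$, not $\eta_n(0)\in\R\,\eta(0)$. By the Goh condition $\eta_n(0)\in(\Delta^2_{x_0})^\perp$, and this space has dimension $\dim M-3$; unless $\dim M=4$ (the Engel case treated in Section~\ref{sec:fourdim}), it contains covectors that are close to but not proportional to $\eta(0)$, and the characteristic line through such a covector is a genuinely different abnormal biextremal, projecting to a regular singular curve that is not a reparametrization of $\gamma$. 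Your argument, as written, gives no way to rule these out; nor does the endpoint constraint $F(v_n)=y$ help by itself, because the statement that the ``abnormal exponential'' $\mu\mapsto\pi\circ\eexp(\cdot\,)(\mu)$ is locally injective (modulo scaling and reparametrization) at $\eta(0)$ is precisely where the non-conjugacy hypothesis (Definition~\ref{defi:conj}) must enter. Your proof never invokes it, so it would apparently prove the corollary for any regular strictly singular curve, conjugate or not — which is not true.

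The paper sidesteps this entirely: from the uniform convergence of the biextremals it deduces that the \emph{controls} $v_n$ converge in the stronger $\BS$ topology (using that regular singular curves have smooth controls determined by their lift), and then invokes Theorem~\ref{thm:isolation}, whose proof does exploit non-conjugacy through the normal form of Theorem~\ref{thm:main}. If you want to keep the geometric flavor of your line-field argument, you would need to replace the ``uniqueness of integral curves'' step with a statement that the preimage of $y$ under the abnormal exponential, modulo scaling and time-reparametrization, is locally just $\eta(0)$ --- and derive that from the Jacobi-equation characterization of conjugate points in Section~\ref{sec:examples}. That is substantially more work than a citation to ODE uniqueness.
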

	
	\begin{proof}
		Let $(v_n)_{n\in \N}\subset \Omega(y)$ be a sequence of singular controls weakly converging to zero in $L^2(I,\R^2)$. By Proposition~\ref{prop:contendp}, $F_t(\gamma_{v_n})\to F_t(\gamma)$ and $d_{v_n}F_t\to d_0F_t$ uniformly on $t\in I$.
		
		Since the conditions in Definition~\ref{defi:ranktwonice} describing a rank-two-nice curve are open conditions among singular curves, we assume that $\gamma_{v_n}$ is rank-two-nice for every $n\in \N$. In particular the controls $(v_n)_{n\in \N}$ are smooth (hence contained in $\BS$).
		
		For every $n\in \N$, pick $\lambda_{v_n}\in \IM(d_{v_n}F)^\perp$ (resp. $\lambda_v\in \IM(d_vF)^\perp$), and define
		\be
			\lambda^0_{v_n}:=\frac{\left( d_{v_n}F \right)^*\lambda_{v_n}}{\|\left( d_{v_n}F \right)^*\lambda_{v_n}\|}\in T_{x_0}^*M,\ \ \lambda^0_v:=\frac{\left( d_vF \right)^*\lambda_v}{\|\left( d_vF \right)^*\lambda_v\|}\in T_{x_0}^*M,
		\ee
		where $\|\cdot\|$ denotes some given norm on $T^*_{x_0}M$. Clearly, $\lambda^0_{v_n}\to \lambda^0_v$, and a similar argument shows that
		\be
			\lambda_{v_n}(t):=(d_{v_n}F_t^{-1})^*\lambda_{v_n}^0 \to  (d_vF_t^{-1})^*\lambda_v^0=:\lambda_v(t)
		\ee
		uniformly on $I$, as $n\to +\infty$ (recall that $(d_{w}F_t^{-1})^*:T_{x_0}^*M\to T^*_{F_t(w)}M$ for every $w\in L^2(I,\R^2)$).
		
		From the uniform convergence of the associated biextremals, we conclude that the convergence of $v_n$ to $v$ takes place in $\BS$. Then Theorem~\ref{thm:isolation} applies, and allows to conclude.
	\end{proof}

	\section{Jacobi fields and computations of conjugate points}\label{sec:examples} We explain in this last chapter how to characterize conjugate points along a rank-two-nice singular curve $\gamma$. We suppose for simplicity that $\Delta_x=\textrm{span}\{X_1(x),X_2(x)\}$ in the domain under consideration and we consider, as in Definition~\ref{defi:locchar}, a local chart $\mc{V}_1\in L^2(I,\R^2)$ centered at zero.
	
	For every $s\in I$ we define $\mc{V}_1^s\subset L^2([0,s],\R^2)$ to be the restriction of $\mc{V}_1$ to the subinterval $[0,s]\subset I$, and if $\gamma$ is an admissible curve, $\gamma_s:=\gamma\big|_{[0,s]}$ denotes the restriction of $\gamma$ to $[0,s]$. The endpoint map, the energy and the extended endpoint map have natural restrictions to $\mc{V}_1^s$, which we denote by $F_s,J_s$ and $\mc{F}_s$, respectively. In particular
	$F_s(\gamma)=F(\gamma_s)$, $J_s(\gamma)=J(\gamma_s)$ and $\mc{F}_s(\gamma)=\mc{F}(\gamma_s)$.
	
	Fix a rank-two-nice curve $\gamma$. To detect conjugate times it is convenient to work on the cotangent space $T^*M$. We call $t\mapsto \lambda(t)$ the extremal lift of $\gamma$, normalized so that $\lambda_0:=\lambda(0)\in T^*_{x_0}M$ has norm one. Then $\lambda(t)=(e^{-tX_1})^*\lambda_0=(e^{(s-t)X_1})^*\lambda(s)$ on $[0,s]$.

	Recall that, by \eqref{eq:kerG} and Corollary~\ref{cor:dec1}, we have
	\be\label{eq:kers}
		\begin{aligned}
	\ker (d_{0}\mathring{F}_s)&=\left\{ (v,c,w)\mid\int_0^sw(t)\dot{g}_t^s(\gamma(s))dt = \int_0^s v(t)dt X_1(\gamma(s)) + c X_2(\gamma(s))\right\},\\ &\subset L^2([0,s],\R) \oplus \R\oplus L^2([0,s],\R)\\
	\ker (d_{0}\mathring{\mc{F}}_s)&=\left\{ (v,c,w)\in \ker (d_{0}\mathring{F}_s) \mid \int_0^sv(t)dt=0  \right\},
		\end{aligned}
	\ee
	and that on $L^2(I,\R)\oplus \R\oplus L^2(I,\R)$ the Hessian map is given by
	\be\label{eq:quadr}
	\int_0^s\langle \lambda(s),[ \dot{g_t^s},g_t^s ](\gamma(s))\rangle w(t)^2dt+\int_0^s\left\langle \lambda(s),\left[cX_2+\int_0^t w(\tau)\dot{g_\tau^s}d\tau,w(t)\dot{g_t^s} \right](\gamma(s)) \right\rangle dt,
	\ee
	both for $F_s$ and $\mc{F}_s$.
	
	\subsection{The symplectic framework}
	
	Let $\sigma\in \Lambda^1(T^*M)$ be the standard Liouville one-form and $\omega:=d\sigma\in \Lambda^2(T^*M)$ denote the canonical symplectic form on $T^*M$. We introduce the following notations:
	\begin{itemize}
		\item For every $\mu\in T^*M$ and $x\in T_\mu(T^*M)$, we denote the skew-orthogonal complement to $x$ in $T_\mu(T^*M)$ by $
			x^\angle:=\left\{ y\in T_\mu(T^*M)\mid \omega_\mu(x,y)=0 \right\}$.
		\item For every subspace $N\subset TT^*M$, we denote the skew-orthogonal complement to $N$ by $N^\angle:=\left\{ y\in TT^*M\mid \omega(x,y)=0,\,\text{for every }x\in N  \right\}$.
		\item For every Hamiltonian function $a\in C^\infty(T^*M)$, we denote by $\vec{a}\in\Vc(T^*M)$ its Hamiltonian lift, given by the formula
		\be
			\omega_\mu(\cdot,\vec{a})=d_\mu a(\cdot),\ \ \text{for every }\mu\in T^*M.
		\ee
	\end{itemize}
	
	Fix $s\in I$, and let $\pi:T^*M\to M$ be the canonical bundle projection. We introduce the fiberwise linear Hamiltonians
	\be\label{eq:hamilt_linear}
			\begin{aligned}
				\xi_1:T^*M&\to \R,\\
					\mu&\mapsto\left\langle \mu,X_1(\pi(\mu)) \right\rangle,
			\end{aligned} \ \ \ \ \ \ \ \
			\begin{aligned}
				\eta_t^s:T^*M&\to \R,\\
					\mu&\mapsto\left\langle \mu,g_t^s(\pi(\mu)) \right\rangle,
			\end{aligned}
	\ee
	where we set $g^s_t:=e^{(s-t)X_1}_*X_2$ for every $t\in [0,s]$. Accordingly, we define the Hamiltonian lifts $\vec{\xi}_1$ and $\vec{\eta}_t^s$, and we observe that $\vec{\eta}_s^s=\vec{\xi}_2$ for every $s\in I$, where $\xi_2$ is the Hamiltonian function associated with $X_2$ as in \eqref{eq:hamilt_linear}. 
	
	\noindent We identify $T^*_{\gamma(s)}M$ with $T_{\lambda(s)}(T^*_{\gamma(s)}M)$: in terms of local bases we identify $\nu=\sum_{i=1}^m\nu_idx_i$ and $\nu=\sum_{i=1}^m \nu_i\partial_{\lambda_i(s)}$. Then $\lambda(s)$ is associated with the value at $\lambda(s)$ of the Euler vector field $\mathbf{e}\in \Vc(T^*M)$:
	\be
		\mathbf{e}(\lambda(s))=\lambda_1(s)\partial_{\lambda_1(s)}+\dots+\lambda_m(s)\partial_{\lambda_m(s)}.
	\ee

	We define
	\begin{itemize}
		\item $\Sigma:=\{ \nu\in T_{\lambda(s)}(T^*M)\mid \omega_{\lambda(s)}(\lambda(s),\nu)=0 \}/\R\lambda(s)$ to be the skew-orthogonal complement of $\lambda(s)$ in the symplectic space $T_{\lambda(s)}(T^*M)$, factorized by $\mathrm{span}\{\lambda(s)\}$. Notice that $\Sigma$ is a symplectic subspace of $T_{\lambda(s)}(T^*M)$, of dimension $2(m-1)$.
		\item $\Pi:=T^*_{\gamma(s)}M/\R \lambda(s)$. 	\end{itemize}
	 Our identifications allow to identify $\Pi$ with a Lagrangian (i.e. of dimension $m-1$) subspace of $\Sigma$,
and for every $\mu\in T^*_{\gamma(s)}M$ we have:
	\be
			\omega_{\lambda(s)}(\mu,\vec{\xi}_1):=\left\langle \mu, X_1(\gamma(s)) \right\rangle,\ \ \omega_{\lambda(s)}(\mu,\vec{\eta}_t^s):=\langle \mu, g_t^s(\gamma(s)) \rangle
	\ee
	where $\langle\cdot,\cdot\rangle$ denotes the dual product. In particular $\omega_{\lambda(s)}(\lambda(s),\vec{\xi}_1)=\omega_{\lambda(s)}(\lambda(s),\vec{\eta}_t^s)=0$ for every $t\in [0,s]$, and thus both $\vec{\xi}_1$ and $\vec{\eta}_t^s$ can be identified with elements of $\Sigma$.
	
	Moreover, by the equality $\omega_{\lambda(s)}(\vec{\xi}_1,\vec{\eta}_t^s)=\langle\lambda(s),[X_1,g_t^s](\gamma(s))\rangle$, we deduce from \eqref{eq:condperp2}  that $\omega_{\lambda(s)}(\vec{\xi}_1,\vec{\eta}_t^s)\equiv 0$ for every $t\in [0,s]$, obtaining as well
	\be\label{eq:newrel_zero}
		\omega_{\lambda(s)}(\vec{\xi}_1,\dot{\vec{\eta}}_t^s)\equiv 0,\ \ \text{for every }t\in [0,s].
	\ee

	Let $l_{t,s}^0:=\omega_{\lambda(s)}(\dot{\vec{\eta}}_t^s,\vec{\eta}_t^s)=\left\langle \lambda(s),[\dot{g}_t^s,g_t^s](\gamma(s)) \right\rangle$. The quadratic form \eqref{eq:quadr} becomes
	\be\label{eq:quadrs}
		\int_0^s l_{t,s}^0 w(t)^2dt+\int_0^s \omega_{\lambda(s)}\left( c\vec{\eta}_s^s+\int_0^s w(\tau)\dot{\vec{\eta}}_\tau^sd\tau, w(t)\dot{\vec{\eta}}_t^s \right)dt,
	\ee
	while the kernel conditions \eqref{eq:kers} read
	\be\label{eq:kerss}
		\begin{aligned}
	\ker (d_{0}\mathring{F}_s)&=\left\{ (v,c,w)\mid\int_0^sw(t)\dot{\vec{\eta}}_t^s(\lambda(s))dt = \int_0^s v(t)dt \vec{\xi}_1(\lambda(s)) + c
	\vec{\xi}_2(\lambda(s))\right\},\\
	\ker (d_{0}\mathring{\mc{F}}_s)&=\left\{ (v,c,w)\mid\int_0^sw(t)\dot{\vec{\eta}}_t^s(\lambda(s))dt = c
	\vec{\xi}_2(\lambda(s)),\; \int_0^s v(t)dt \vec{\xi}_1(\lambda(s)) = 0 \right\}.
		\end{aligned}
	\ee

	These conditions are equivalent to the following:
	\begin{itemize}
		\item [(a)] $w\in L^2([0,s],\R)$ belongs to $\ker (d_{0}\mathring{F}_s)$ if, for every $\nu\in \Pi\cap\mathrm{span}\{ \vec{\xi}_1(\lambda(s)),
		\vec{\xi}_2(\lambda(s)) \}^\angle+ \R\vec{\xi}_1(\lambda(s))$, we have $\int_0^s\omega_{\lambda(s)}(\nu,\dot{\vec{\eta}}_t^s)w(t)dt=0$. Notice that $\nu$ can be defined modulo $\vec{\xi}_1(\lambda(s))$ because of \eqref{eq:newrel_zero}.
		\item [(b)] $w\in L^2([0,s],\R)$ belongs to $\ker (d_{0}\mathring{\mc{F}}_s)$ if, for every $\nu\in \Pi\cap\mathrm{span}\{
		\vec{\xi}_2(\lambda(s)) \}^\angle$, we have $\int_0^s\omega_{\lambda(s)}(\nu,\dot{\vec{\eta}}_t^s)w(t)dt=0$.
	\end{itemize}

	\subsection{Conjugate points} Let $s\in I$. We recall that $\gamma(s)$ is a conjugate point along $\gamma$
	if the kernel of the quadratic form in \eqref{eq:quadrs}, whose domain is determined by either one of the two conditions in \eqref{eq:kerss},
	admits a nontrivial orthogonal decomposition of the form
	\be
		\left\{ (v,0,0)\mid \int_0^1 v(t)dt=0 \right\}\oplus Z_1' .
	\ee
	The multiplicity of $\gamma(s)$ as a conjugate point equals the dimension of $Z_1'$ (compare with Definition~\ref{defi:conj}).
	
	Thus
	$\gamma(s)$ is a conjugate point along $\gamma$ if and only if there exists $(v_0,c_0,w_0)\in \ker(d_{0}\mathring{F}_s)$ (resp. $(v_0,c_0,w_0)\in\ker(d_{0}\mathring{\mc{F}}_s)$) such that
	\be\label{eq:kercond}
		\begin{aligned}
		\mathrm{(i)} & & & (c_0,w_0)\ne (0,0),\\
		\mathrm{(ii)} & & & \int_0^s\left(  l_{t,s}^0 w_0(t)+\omega_{\lambda(s)}\left(c_0
		\vec{\xi}_2+\int_0^tw_0(\tau)\dot{\vec{\eta}}		_\tau^sd\tau,\dot{\vec{\eta}}_t^s    \right)\right)w(t)dt=0
		\end{aligned}	
	\ee
	for every $(v,c,w)\in\ker (d_0\mathring{F}_s)$ (resp. for every $(v,c,w)\in\ker (d_0\mathring{\mc{F}}_s)$).
	
	By \eqref{eq:kerss}, this implies that for all $t\in [0,s]$
	\be\label{eq:Je}
		l_{t,s}^0 w_0(t)+\omega_{\lambda(s)}(c_0
		\vec{\xi}_2+\int_0^tw_0(\tau)\dot{\vec{\eta}}_\tau^sd\tau,\dot{\vec{\eta}}_t^s    )=\omega_{\lambda(s)}(-\nu,\dot{\vec{\eta}}_t^s),
	\ee
	for some $\nu\in\Pi\cap\mathrm{span}\{ \vec{\xi}_1(\lambda(s)),
	\vec{\xi}_2(\lambda(s)) \}^\angle+\R\vec{\xi}_1(\lambda(s))$ (resp. $\nu\in\Pi\cap\mathrm{span}\{
	\vec{\xi}_2(\lambda(s)) \}^\angle
$).
	
	Let
		\be
			k(t):=\int_0^t w_0(\tau)\dot{\vec{\eta}}_\tau^s(\lambda(s))d\tau+c_0
			\vec{\xi}_2(\lambda(s))+\nu.
		\ee
	Multiplying both its sides by $\dot{\vec{\eta}}_t^s(\lambda(s))$, we rewrite \eqref{eq:Je} as the Jacobi equation on $\Sigma$:
	\be\label{eq:Jes}
		l_{t,s}^0\dot{k}(t)=\omega_{\lambda(s)}(\dot{\vec{\eta}}_t^s,k(t))\dot{\vec{\eta}}_t^s(\lambda(s)),
	\ee
	where the corresponding boundary conditions become, respectively,
	\be\label{eq:bc}
		\begin{aligned}
	(a)&\ \ \left\{
	\begin{aligned}
		k(0) & \in \Pi\cap\mathrm{span}\{ \vec{\xi}_1(\lambda(s)),
		\vec{\xi}_2(\lambda(s)) \}^\angle+\mathrm{span}\{\vec{\xi}_1(\lambda(s)),
		\vec{\xi}_2(\lambda(s))\},\\
		k(s) & \in \Pi
	\end{aligned}
	\right.\\
	(b)&\ \
	\left\{
	\begin{aligned}
		k(0) & \in \Pi\cap\mathrm{span}\{
		\vec{\xi}_2(\lambda(s)) \}^\angle+\mathrm{span}\{
		\vec{\xi}_2(\lambda(s))\},\\
		k(s) & \in \Pi
	\end{aligned}
	\right.
		\end{aligned}
	\ee
	
	\begin{prop}\label{prop:conj_equiv}
		Let $s\in I$. The point $\gamma(s)$ is a conjugate point along $\gamma$ for the map $F_s$ (resp. for $\mc{F}_s$) if and only if there exists a solution $k(\cdot)$ to \eqref{eq:Jes}, that satisfies an appropriate boundary condition in \eqref{eq:bc}.
	\end{prop}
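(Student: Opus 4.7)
The plan is to close the chain of equivalences already assembled in the discussion preceding the statement. Everything between conditions \eqref{eq:kercond} and the Jacobi equation \eqref{eq:Jes}--\eqref{eq:bc} has been derived as a one-way implication; my task is to verify that each step is in fact reversible, and that the boundary data corresponds exactly to the two cases (a), (b) of the kernel description.

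For the forward direction, assume $\gamma(s)$ is conjugate along $\gamma$ for $F_s$ (the case of $\mc F_s$ is identical with the kernel of $d_0\mathring{\mc F}_s$ replacing that of $d_0\mathring F_s$). By Definition~\ref{defi:conj} there is a nontrivial $(v_0,c_0,w_0)$ in $\ker(\lambda(s)\He_0\mathring F_s)\setminus Z_1$, so in particular $(c_0,w_0)\neq (0,0)$, which gives (i). Condition (ii) then follows because being in the kernel of the quadratic form is equivalent, via polarization, to the associated bilinear form vanishing on every kernel direction. I would then use the symplectic reformulation (a)/(b) of $\ker(d_0\mathring F_s)$: testing (ii) against all admissible $w$ reduces the identity to holding modulo the annihilator of $\Pi\cap\{\vec{\xi}_1,\vec{\xi}_2\}^\angle+\R\vec{\xi}_1(\lambda(s))$. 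A standard Lagrange multiplier argument then produces $\nu$ in that space so that \eqref{eq:Je} holds pointwise. Defining $k(t)$ as in the text and differentiating gives $\dot k(t)=w_0(t)\dot{\vec\eta}_t^s(\lambda(s))$, and multiplying \eqref{eq:Je} by $\dot{\vec\eta}_t^s(\lambda(s))$ produces the Jacobi equation \eqref{eq:Jes}. The boundary condition at $0$ is immediate from the definition of $k(0)$ and the location of $\nu$; the condition at $s$ follows by substituting the kernel constraint $\int_0^s w_0(t)\dot{\vec\eta}_t^s dt = \int_0^s v_0 dt\,\vec{\xi}_1(\lambda(s))+c_0\vec{\xi}_2(\lambda(s))$ and using the $\R\vec{\xi}_1$-freedom in the choice of $\nu$ to absorb the $\vec{\xi}_1$ component, leaving $k(s)\in\Pi$.

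For the converse, given a solution $k(\cdot)$ of \eqref{eq:Jes} satisfying the appropriate boundary condition in \eqref{eq:bc}, I would read off from $k(0)$ the decomposition $k(0)=c_0\vec{\xi}_2(\lambda(s))+\nu$ (together with a possible $\vec{\xi}_1$ piece in case (a)) and set $w_0(t)$ so that $\dot k(t)=w_0(t)\dot{\vec\eta}_t^s(\lambda(s))$; the nondegeneracy \eqref{eq:kappa} guarantees this is well defined. The Jacobi equation \eqref{eq:Jes} then gives back the pointwise identity \eqref{eq:Je}. Finally I would choose $v_0$ so that $(v_0,c_0,w_0)$ satisfies the kernel constraint \eqref{eq:kerss}, which is possible precisely because $k(s)\in\Pi$. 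The key subtlety is nontriviality: one must rule out $(c_0,w_0)=(0,0)$, which would force $k(t)\equiv \nu\in\Pi$ to be constant; but a constant $k$ solves \eqref{eq:Jes} only when the corresponding triple lies in $Z_1$, so genuine conjugacy is witnessed by a genuinely nonconstant $k$.

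The main obstacle is the careful bookkeeping across three equivalent languages: the quadratic form on controls in $L^2\oplus\R\oplus L^2$, the pointwise Lagrange-multiplier equation \eqref{eq:Je} valued in $\Pi$, and the symplectic Jacobi equation \eqref{eq:Jes} on $\Sigma$. In particular, the freedom of $\nu$ modulo $\R\vec{\xi}_1(\lambda(s))$ in case (a) (resp.\ absence of such freedom in case (b)) is exactly what distinguishes the boundary conditions for $F_s$ and $\mc F_s$, and must be tracked consistently when matching the kernel constraint $\int_0^s w_0\dot{\vec\eta}_t^s dt=\int_0^s v_0 dt\,\vec{\xi}_1+c_0\vec{\xi}_2$ against the requirement $k(s)\in\Pi$.
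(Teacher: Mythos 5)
Your proposal matches the paper's (implicit) proof: the paper does not write out a separate argument for Proposition~\ref{prop:conj_equiv}, it simply asserts it after the chain of identities \eqref{eq:kercond}--\eqref{eq:Je}--\eqref{eq:Jes}--\eqref{eq:bc}, and the content is precisely that each step is reversible, which is what you verify. Your forward direction is a faithful restatement; your converse direction correctly identifies the key point, namely that the Legendre-type positivity $l^0_{t,s}\neq 0$ forces $\dot k(t)$ to be parallel to $\dot{\vec\eta}^s_t(\lambda(s))$, so $w_0$ can be read off, and that a constant solution $k\equiv\nu\in\Pi$ corresponds exactly to the trivial part $Z_1$ of the kernel.

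One point deserves a harder look before you are fully done. When you substitute the kernel constraint into $k(s)=\int_0^s w_0\dot{\vec\eta}^s_\tau\,d\tau+c_0\vec\xi_2(\lambda(s))+\nu$ you obtain
$$k(s)=\textstyle\int_0^s v_0\,dt\,\vec\xi_1(\lambda(s))+2c_0\,\vec\xi_2(\lambda(s))+\nu,$$
and after using the $\R\vec\xi_1$-freedom in $\nu$ to kill the $\vec\xi_1$ coefficient, a residual term $2c_0\,\vec\xi_2(\lambda(s))$ remains, which does not lie in $\Pi$ unless $c_0=0$. Either the signs/coefficients in \eqref{eq:quadrs}--\eqref{eq:kercond} need to be re-polarized with more care (the bilinear form associated to \eqref{eq:qfs} has a cross-term in $c$ and $w$ that produces an extra contribution you are implicitly using), or the boundary condition at $s$ in \eqref{eq:bc} is meant modulo $\mathrm{span}\{\vec\xi_1,\vec\xi_2\}$ rather than literally in $\Pi$. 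Your argument glosses over this with ``absorb the $\vec\xi_1$ component,'' which does not by itself dispose of the $\vec\xi_2$ term; you should either track the polarization of the Hessian carefully enough to see the cancellation, or adjust the formulation of the boundary condition. Apart from this bookkeeping issue, the structure of your argument is the intended one.
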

	
	\begin{remark}\label{rem:sols}
Observe that in both cases $k(0)$ belongs to a Lagrangian subspace of $\Sigma$, of the form $\Pi\cap \Gamma^\angle+\Gamma$ with $\Gamma$ isotropic.
	\end{remark}

	\subsection{Regular distributions}
	
	We specify now the Jacobi equations for a particular class of rank-two sub-Riemannian structures, the so-called regular structures, that have been investigated e.g. in \cite{Sus4d,SLShortest}. Assume $M$ is an $(m+2)$-dimensional manifold, and that $\Delta=\mathrm{span}\{ X_1,X_2 \}$ in the domain under consideration. Moreover, we assume that:
	\begin{itemize}
		\item  $X_1,X_2,\dots,(\mathrm{ad} X_1)^{m-1}X_2$ are linearly independent.
		\item There exist smooth functions $\beta,\{ \alpha^i,i=0,\dots,m-1\}$ on $M$, such that
		\be\label{eq:dep}
		(\mathrm{ad}X_1)^mX_2=\beta X_1+\sum_{i=0}^{m-1}\alpha^i(\mathrm{ad}X_1)^iX_2.
		\ee
		\item  $[[X_1,X_2],X_2]$ is linearly independent from $V=\mathrm{span}\{ X_1,X_2,\dots, (\mathrm{ad}X_1)^{m-1}X_2 \}$.
	\end{itemize}
	Under these hypotheses it turns out \cite[Section 8]{Sus4d} that integral curves of the vector field $X_1$ are indeed corank-one abnormal geodesics for $\Delta$. Moreover, these curves are also strictly abnormal as soon as $\beta\neq 0$ along the trajectory.
	
	With $g_t^s$ as in \eqref{eq:hamilt_linear} and $i\in\N$, we define
	\be\label{eq:lis}
		\begin{aligned}
		g_t^{s,(i)}&:=\partial_t^{(i)}g_t^s=(-1)^ie^{(s-t)X_1}_*(\mathrm{ad}X_1)^iX_2,\\
		l^{(i)}_{t,s}&:=\langle\lambda(s),[g_t^{s,(1)},g_t^{s,(i)}](\gamma(s))\rangle=\omega_{\lambda(s)}(\vec{\eta}_t^{s,(1)},\vec{\eta}_t^{s,(i)}).
		\end{aligned}
	\ee

	Calling $\beta_t:=\beta(\gamma(t))$ and $\alpha_t^i:=\alpha^i(\gamma(t))$, it is immediate to deduce from \eqref{eq:dep} its symplectic version along $\gamma$, that is
	\be\label{eq:deps}
		\vec{\eta}_{t}^{s,(m)}(\lambda(s))=\beta_t\vec{\xi}_1(\lambda(s))+\sum_{i=0}^{m-1}\alpha_t^i\vec{\eta}_t^{s,(i)}(\lambda(s)).
	\ee
Let $Z:=\{\vec{\xi}_1(\lambda(s)), \vec{\eta}_t^s(\lambda(s)),t\in [0,s] \}$, and notice that we have the decomposition $\Sigma=\Pi\oplus Z$. For every $\tau\in [0,s]$, \eqref{eq:deps}, yields:
\be
	Z=\mathrm{span}\left\{ \vec{\xi}_1(\lambda(s)), \vec{\eta}_\tau^s(\lambda(s)),\dots, \vec{\eta}_\tau^{s,(m-1)}(\lambda(s)) \right\}.
\ee
Notice that $Z$ is not a Lagrangian subspace, nonetheless the symplectic form $\omega_{\lambda(s)}$ defines a non-degenerate splitting between $\Pi$ and $Z$.

Let us write $k(t)=z_t+\theta_t$, with $z_t\in Z$ and $\theta_t\in \Pi$. Then \eqref{eq:Jes} splits as the differential system of equations:
	\be\label{eq:Jess}
		\left\{\begin{aligned}
		l_{t,s}^0\dot{z}_t&=\omega_{\lambda(s)}(\vec{\eta}_t^{s,(1)},z_t+\theta_t)\vec{\eta}_t^{s,(1)}(\lambda(s)),\\
		\dot{\theta}_t&=0,
		\end{aligned}\right.
	\ee
	whose boundary conditions are given respectively by (compare with \eqref{eq:bc}):
	\be\label{eq:bc_coord}
		\begin{aligned}
					(a) && z_s&=0, && z_0\in \mathrm{span}\{ \vec{\xi}_1(\lambda(s)),
		\vec{\xi}_2(\lambda(s)) \}, && \omega_{\lambda(s)}(
		\vec{\xi}_2,\theta_0)=\omega_{\lambda(s)}(\vec{\xi}_1,\theta_0)=0,\\
		(b) && z_s&=0, && z_0\in \mathrm{span}\{
		\vec{\xi}_2(\lambda(s)) \}, && \omega_{\lambda(s)}(
		\vec{\xi}_2,\theta_0)=0.
		\end{aligned}	
	\ee
	
	If we write $z_t=z_t^f\vec{\xi}_1(\lambda(s))+\sum_{i=0}^{m-1}z_t^i\vec{\eta}_t^{s,(i)}(\lambda(s))$ and we define $\zeta_t:=\omega_{\lambda(s)}(\vec{\eta}_t^s,\theta_0)$ and $l_{t,s}^{(i)}$ as in \eqref{eq:lis}, we see that \eqref{eq:Jess} is equivalent to the following system of equations:
	\be\label{eq:sysje}
	\left\{
	\begin{aligned}
		\dot{z}_t^f & =  -\beta_t z_t^{m-1}, &  z_
		s^f=0,\\
		\dot{z}_t^0 & =  -\alpha_t^0 z_t^{m-1}, & z_
		s^0=0, \\
		l_{t,s}^0(\dot{z}_t^1+\alpha_t^1 z_t^{m-1})& =  \sum_{j=2}^{m-1}l_{t,s}^{(j)}z_t^j+\dot{\zeta}_t,\ & z_
		s^1=0,\\
		\dot{z}_t^j+\alpha_t^jz_t^{m-1}& =  -z_t^{j-1},\ \ \text{for every } j=2,\dots,m-1, & z_
		s^j=0,\\
		\zeta_t^{(m)}& = \beta_t \omega_{\lambda(s)}(\vec{\xi}_1,\theta_0)+\sum_{i=0}^{m-1}\alpha_t^i\zeta_t^{(i)}, & \zeta_s=0,
	\end{aligned}
	\right.
	\ee
	and $\gamma(s)$ is a conjugate point along $\gamma$ for $F_s$ (resp. for $\mc{F}_s)$ if and only if \eqref{eq:sysje} admits a nontrivial solution that further satisfies the boundary conditions \eqref{eq:bc_coord}
	\begin{itemize}
		\item [(a)] $z_
		0^i=0$ for all $i=1,\dots,m-1$,
		\item [(b)] $z_
		0^i=0$ for all $i=1,\dots,m-1$ and also $z_
		0^f=0$.
	\end{itemize}
	
	\subsection{The Engel case}\label{sec:fourdim}
	
	We study conjugate points along rank-two-nice singular curves on Engel structures, that is regular rank-two sub-Riemannian structures $(M,\Delta)$ with $M$ of dimension $4$. In this case, the vector field $X_1$ satisfies all the conditions on the Lie brackets required in the previous part by the results in \cite{Ger}.

	We begin with the endpoint map $F_s$. Taking into account that $\omega_{\lambda(s)}(\vec{\xi}_1,\theta_0)=0$, the relevant equations to solve \eqref{eq:sysje} are:
	\be\label{eq:sysjef}
	\left\{
	\begin{aligned}
		l_{t,s}^0(\dot{z}_t^1+\alpha_t^1 z_t^{1})& =  \dot{\zeta}_t, & z_s^1=0,\\
		\ddot{\zeta}_t& = \alpha_t^0\zeta_t+\alpha_t^1\dot{\zeta}_t, & \zeta_s=0.
	\end{aligned}
	\right.
	\ee
	
	We see from \eqref{eq:deps} that
	\be\label{eq:dlis}
		\dot{l}_{t,s}^0=\frac{d}{dt}\omega_{\lambda(s)}(\dot{\vec{\eta}}_t^s,\vec{\eta}_t^s)=\omega_{\lambda(s)}(\ddot{\vec{\eta}}_t^s,\vec{\eta}_t^s)=\alpha_t^1l_{t,s}^0,
	\ee
	yielding that $l_{s,t}^0=l_{0,s}^0e^{\int_0^t\alpha_\tau^1d\tau}$, and therefore
	\be\label{eq:ddlis}
		l_{0,s}^s\frac{d}{dt}\left(z_t^1e^{-\int_0^t\alpha_\tau^1d\tau}\right)=\dot{\zeta}_t.
	\ee
	Because $\gamma$ is rank-two-nice, $l_{0,s}^0\ne 0$ and then the further requirement $z_0^1=0$ is met if and only if $\zeta_0=\omega_{\lambda(s)}(\vec{\eta}_0^s,\theta_0)=0$. So far, we have thus established the relations
	\be
		\omega_{\lambda(s)}(\vec{\eta}_s^s,\theta_0)=\omega_{\lambda(s)}(\vec{\eta}_0^s,\theta_0)=\omega_{\lambda(s)}(\vec{\xi}_1,\theta_0)=0.
	\ee
	
	On the other hand $\theta_0\in \Pi$ belongs to a three dimensional space, and then the linear map $\omega_{\lambda(s)}(\cdot,\theta_0)
	$ cannot have a three dimensional kernel, for otherwise $\theta_0$ would be zero. This implies that $\gamma(s)\in I$ is a conjugate point along $\gamma$ if and only if
	\be
		X_1(\gamma(s))\wedge X_2(\gamma(s))\wedge g_0^s(\gamma(s))=0.
	\ee
	
	Similar computations hold for the extended endpoint map $\mc{F}_s$. Here, the relevant equations are
	\be\label{eq:sysjes}
	\left\{
	\begin{aligned}
		\dot{z}_t^f & =  -\beta_t z_t^1, & z_s^f=0,\\
		l_{t,s}^0(\dot{z}_t^1+\alpha_t^1 z_t^{1})& =  \dot{\zeta}_t, &  z_s^1=0,\\
		\ddot{\zeta}_t& = \alpha_t^0\zeta_t+\alpha_t^1\dot{\zeta}_t+\beta_t\omega_{\lambda(s)}(\vec{\xi}_1,\theta_0), & \zeta_s=0.
	\end{aligned}
	\right.
	\ee
	
	In addition to the equations found before, we require $z_0^f=0$. Since $z_0^1=\zeta_0$, we obtain from \eqref{eq:ddlis} that $z_t^1=\frac{1}{l_{0,s}^0}\zeta_t e^{-\int_0^t \alpha_\tau^1d\tau}$, which readily yields $\int_0^s\beta_t \zeta_t e^{-\int_0^t\alpha_r^1dr}dt=0$.
	
	In terms of vector fields we conclude that $\gamma(s)$ is a conjugate time along $\gamma$ if and only if
	\be
		X_2(\gamma(s))\wedge g_0^s(\gamma(s))\wedge \int_0^s \beta_t  e^{-\int_0^t\alpha_\tau^1d\tau}g_t^s(\gamma(s))dt=0.
	\ee
		
	We are ready to complete the discussion of the example of Section~\ref{sec:example}. From the structural equations we see that
	\be
		[X_1,[X_1,X_2]]=\frac{X_1}{2}-X_2,
	\ee
	whence $\alpha_t^0\equiv -1$, $\alpha_t^1\equiv 0$ and $\beta_t\equiv 1/2$. Moreover it is not restrictive to assume by \eqref{eq:dlis} that  $l_{t,s}^0\equiv 1$, and we suppose as well $\dot{\zeta}_0=1$.
	
	In the case of the endpoint map $F_s$ it is immediate to solve for $\zeta$, and we find $\zeta_t=\sin(t)$. A time $s\in I$ yields a conjugate point $\gamma(s)$ along $\gamma$ if and only if $\zeta_s=0$, whence the computation of $a_F$ readily follows.
	
	We pass to $\mc{F}_s$, and we denote for simplicity $\ell:=\omega_{\lambda(s)}(\vec{\xi}_1,\theta_0)\ne 0$. Solving for $\zeta_t$ and $z_t^f$ (imposing $z_0^f=0)$ yields:
	\be
		\begin{aligned}
			\zeta_t&=\frac{1}{2}\left( \ell-\ell \cos(t)+2\sin(t) \right),\\
			z_t^f&=\int_0^t\frac{1}{2}\zeta_\tau d\tau=\frac{1}{4}\left( 2+\ell t-2\cos(t)-\ell \sin(t) \right).
		\end{aligned}
	\ee
	We look for solutions to $\zeta_t=z_t^f=0$. Clearing out $\ell$, we see that $\gamma(s)$ is a conjugate point if and only if $s$ solves
	\be
		\frac{2\cos(s)-2+s\sin(s)}{2\cos(s)-2}=0.
	\ee
	Now, notice that if $\cos(s)=1$, $z_s^f=0$ if and only if $s=0$ or $\ell=0$, which is not possible by assumption. Then we may forget about the denominator and arrive to the claimed expression for $a_\mc{F}$.

	\bibliographystyle{abbrv}
	\bibliography{Biblio}
	
\end{document}